\setlist{topsep=3pt,itemsep=-0.5ex}
\theoremstyle{definition}
\newtheorem{dfn}{Definition}[section]
\newtheorem{thm}[dfn]{Theorem}
\newtheorem{lem}[dfn]{Lemma}
\newtheorem{prop}[dfn]{Proposition}
\newtheorem{cor}[dfn]{Corollary}
\newtheorem{ex}[dfn]{Example}
\newtheorem{rem}[dfn]{Remark}
\newtheorem{assum}[dfn]{Assumption}
\newcommand{\R}{\mathbb{R}}
\newcommand{\E}{\mathbb{E}}
\newcommand{\Prob}{\mathbb{P}}
\newcommand{\supp}{\mathop{\mathrm{supp}}}
\newcommand{\tr}{\mathop{\mathrm{tr}}}
\newcommand{\divg}{\mathop{\mathrm{div}}}
\DeclareMathOperator*{\esssup}{ess\,sup}
\newcommand{\roverline}[1]{\mathpalette\@roverline{#1}}
\newcommand{\@roverline}[2]{\overline{#1#2}}
\DeclareFontFamily{U}{MYmathx}{}
\DeclareFontShape{U}{MYmathx}{m}{n}{<-> mathx10}{}
\DeclareSymbolFont{MYmathx}{U}{MYmathx}{m}{n}
\DeclareMathAccent{\widecheck}{0}{MYmathx}{"71}
\renewenvironment{proof}[1][\proofname]{\par
	\pushQED{\qed}%
	\normalfont \topsep6\p@\@plus6\p@\relax
	\trivlist\item[\hskip\labelsep\textbf{#1\@addpunct{.}}] 
	\ignorespaces
}{%
	\popQED\endtrivlist\@endpefalse
}
\title{A norm equivalence result for stochastic differential equations with locally Lipschitz coefficients}
\date{}
\author{Kyo Yamazaki\thanks{Graduate School of Engineering Science, The University of Osaka, Japan.}}
\begin{document}
	\maketitle
	\begin{abstract}
		We establish two-sided weighted integrability estimates, often referred to as a norm equivalence result, for stochastic differential equations (SDEs) with locally Lipschitz coefficients.
		As a key ingredient in our approach, we also derive an SDE satisfied by the inverse stochastic flow under reduced regularity assumptions in the globally Lipschitz setting.
	\end{abstract}
	
	{\small\noindent\textbf{Keywords:}
		stochastic differential equations; norm equivalence; inverse flow; weak differentiability; parabolic partial differential equations.}
	
	{\small\noindent\textbf{2020 Mathematics Subject Classification:}
		Primary 60H10; Secondary 35K10.}
	
	\section*{Introduction}
	This paper is concerned with the following stochastic differential equation (SDE)
	\begin{align*}
		X_s^{t,x}=x+\int_t^s b(r,X_r^{t,x})\,dr+\int_t^s \sigma(r,X_r^{t,x})\,dW_r,
		\quad s\in[t,T],
	\end{align*}
	where $b:[0,T]\times\R^d\to\R^d$ and $\sigma:[0,T]\times\R^d\to\R^{d\times d'}$, and $(W_t)_{t\in[0,T]}$ is a $d'$-dimensional Brownian motion.
	Our main objective is to establish two-sided weighted integrability estimates of the form
	\begin{align}
		&c\int_{\R^d}|\varphi(x)|\rho(x)\,dx
		\le \int_{\R^d}\E\big[|\varphi(X^{t,x}_s)|\big]\rho(x)\,dx
		\le C\int_{\R^d}|\varphi(x)|\rho(x)\,dx,
		\label{eq:intro.norm_eq_result1}\\
		&\begin{aligned}
			c\int_{\R^d}\int_t^T|\psi(s,x)|\rho(x)\,ds\,dx
			&\le \int_{\R^d}\int_t^T\E\big[|\psi(s,X^{t,x}_s)|\big]\rho(x)\,ds\,dx \\
			&\qquad\qquad\le C\int_{\R^d}\int_t^T|\psi(s,x)|\rho(x)\,ds\,dx, 
		\end{aligned}\label{eq:intro.norm_eq_result2}
	\end{align}
	for arbitrary Borel functions $\varphi:\R^d\to\R$ and $\psi:[0,T]\times\R^d\to\R$.
	Here $\rho:\R^d\to(0,\infty)$ is a suitable weight function, and the constants $c,C>0$ are independent of $\varphi$ and $\psi$.
	Results of this type are often referred to as a \emph{norm equivalence result} or a \emph{norm equivalence principle} (e.g., \cite{BarlesLesigne1997,BallyMatoussi2001,OuknineTurpin2006,GobetLabart2010,FengWangZhao2018}; see also \cite{DelbaenQiuTang2015}).
	
	\medskip
	\noindent
	\textbf{Motivation.}
	Norm equivalence plays a fundamental role in the probabilistic approach to partial differential equations (PDEs), especially in the study of Sobolev weak solutions and their stochastic representations via BSDEs/FBSDEs.
	For semilinear and quasilinear parabolic equations, the nonlinear Feynman--Kac formula connects the PDE solution $u$ (and typically $\sigma^\top\nabla u$) with suitable components of the associated BSDE/FBSDE (see, e.g., \cite{PardouxPeng1992}).
	Analogous probabilistic representations have been developed for Sobolev weak solutions; see, for example, \cite{BarlesLesigne1997,BallyMatoussi2001,OuknineTurpin2006,ZhangF2009,FengWangZhao2018}.
	In this framework, norm equivalence provides an essential bridge between function-space integrability of $(u,\sigma^\top\nabla u)$ and stochastic integrability of the processes obtained by composing these objects with the forward diffusion.
	More concretely, combining a (nonlinear) Feynman--Kac representation with suitable a priori estimates for BSDEs/FBSDEs, one can derive weighted $L^p$-type bounds for the corresponding PDE solutions;
	conversely, analytic integrability estimates for weak PDE solutions can be used to control the associated BSDE/FBSDE solutions in appropriate stochastic spaces (see, for instance, \cite{BarlesLesigne1997,BallyMatoussi2001,OuknineTurpin2006}).
	In addition, when $\varphi$ is defined only up to Lebesgue-null sets, \eqref{eq:intro.norm_eq_result1} ensures that the composition $\varphi(X_s^{t,x})$ is still well defined in the $\Prob\otimes dx$-a.e.\ sense and can be handled rigorously.
	Such compositions naturally arise, for example, when one identifies $u(s,X_s^{t,x})$ and $(\sigma^\top\nabla u)(s,X_s^{t,x})$ in the study of Sobolev solutions and their probabilistic interpretations (e.g., \cite{BallyMatoussi2001,OuknineTurpin2006,FengWangZhao2018}).
	
	\medskip
	\noindent
	\textbf{Related literature.}
	The norm equivalence principle was first established by Barles and Lesigne~\cite{BarlesLesigne1997} for $\rho=1$ in the context of Sobolev solutions of semilinear parabolic PDEs,
	where analytic properties of the associated PDE are combined with a probabilistic representation of its solution.
	Bally and Matoussi~\cite{BallyMatoussi2001} later proved weighted versions for exponential-type weights $\rho=e^{F}$ under smoothness assumptions on the coefficients.
	Their argument is primarily probabilistic and is centered on estimates of the Jacobian determinant of the inverse flow, which naturally arise in change-of-variable arguments.
		
	Ouknine and Turpin~\cite{OuknineTurpin2006} proposed a related approach aimed at reducing the required smoothness by incorporating ideas connected with Dirichlet forms in the spirit of
	Bouleau--Hirsch~\cite{BouleauHirsch1988}.
	Their proof follows the same broad strategy of estimating Jacobians of inverse flows.
	
	A different route to norm equivalence relies on transition density estimates.
	For instance, Gobet and Labart~\cite{GobetLabart2010} derived a norm equivalence estimate for exponential spatial weights of the form $\rho(x)=e^{-\mu|x|}$ (up to a mild time-dependent factor),
	using two-sided Gaussian bounds for transition densities under a uniform ellipticity condition.
	Compared with the inverse-flow approach, this method can substantially relax the spatial regularity requirements on the coefficients and avoids explicit Jacobian estimates,
	but it may be viewed as closer in spirit to PDE techniques through the use of Aronson-type bounds.
	
	From the viewpoint of time-dependent coefficients,
	Delbaen, Qiu and Tang~\cite{DelbaenQiuTang2015} obtained a norm equivalence estimate in a setting where the Lipschitz constant of the drift may depend on time,
	motivated by applications to FBSDEs with Sobolev-regular coefficients.
	Such time-inhomogeneous controls appear to be relatively rare in the existing literature on norm equivalence.
	
	Several extensions and variants of the norm equivalence principle have also been developed.
	Zhang and Zhao~\cite{ZhangZhao2007} introduced a \emph{generalized equivalence of norm principle}
	for certain classes of random functions and employed it in the study of finite and infinite horizon BDSDEs and their links to weak solutions of SPDEs.
	On the other hand, Feng, Wang, and Zhao~\cite{FengWangZhao2018} formulated norm equivalence-type arguments in a coupled FBSDE framework associated with quasilinear equations.
	Furthermore, Matoussi, Sabbagh and Zhou~\cite{MatoussiSabbaghZhou2015} extended the inverse-flow strategy to jump settings and established a corresponding norm equivalence result.	
	
	\medskip
	\noindent
	\textbf{A remark on the inverse-flow approach.}
	We briefly comment on the inverse-flow strategy under reduced regularity assumptions, with particular reference to \cite{OuknineTurpin2006}.
	Broadly speaking, the inverse-flow strategy proceeds by identifying an SDE satisfied by	the inverse stochastic flow and then estimating quantities such as the Jacobian determinant	of the inverse flow that arise in change-of-variable arguments.
	
	In \cite{OuknineTurpin2006}, the inverse-flow SDE is recalled after using Kunita's theory \cite{Kunita1984} to obtain a stochastic flow of homeomorphisms under Lipschitz coefficients,	while the additional assumptions are stated mainly as a Lipschitz condition on the quantity $\sigma^*\nabla\sigma$ (in their notation).
	From our standpoint, it is not immediate that these assumptions alone suffice to justify the same inverse-flow SDE beyond the smooth setting.
	Since the subsequent proof relies on an SDE on a product space based on this inverse-flow representation, making this step explicit is therefore essential for a fully rigorous treatment in such a low-regularity framework.
	
	Finally, although we will not pursue this point in detail here, change-of-variables arguments for weakly differentiable homeomorphisms also call for some care, in particular in dimensions $d>2$.
	In our setting this can be handled by strengthening the relevant $L^2$-type controls to $L^p$-type estimates with $p>d$, ensuring the needed Lusin property.
	
	These observations motivate our self-contained analysis of the inverse-flow SDE under reduced regularity (Proposition~\ref{th:inverse_flow_SDE}), and our quantitative control of the associated Jacobian terms, which form key ingredients in the proof of the main norm equivalence theorem. 

	\medskip	
	\noindent
	\textbf{Main results and contributions.}
	The present paper revisits the inverse-flow approach and establishes a strengthened and more flexible norm equivalence theorem for SDEs with time-dependent coefficients.
	Our main contributions are summarized as follows.
	
	\begin{enumerate}[label={(\roman*)},wide,topsep=3.1pt,itemsep=-2.4pt]
		\item \emph{Locally Lipschitz coefficients.}
		We establish norm equivalence \eqref{eq:intro.norm_eq_result1}--\eqref{eq:intro.norm_eq_result2} for SDEs with (possibly time-dependent) \emph{locally} Lipschitz coefficients.
		To the best of our knowledge, such a locally Lipschitz version has not been explicitly available in the existing norm equivalence literature.
		
		\item \emph{Weaker time regularity and explicit constants.}
		Our assumptions allow the relevant growth and Lipschitz controls to be governed by integrable functions of time.
		Moreover, we obtain an explicit dependence of the equivalence constants: in Theorem~\ref{th:main_theorem_locally_Lipschitz} we may take $c=e^{-5\|\widetilde{K}\|_{L^1([0,T])}}$ and $C=e^{5\|\widetilde{K}\|_{L^1([0,T])}}$.
		This quantitative form clarifies the stability of the principle and is particularly useful for extensions to long time horizons, including the infinite-horizon case.
		
		\item \emph{Exponential weights and the role of boundedness.}
		In the globally Lipschitz setting, we clarify the role of boundedness of the coefficients for exponential weights by providing explicit counterexamples (Remark~\ref{rem:counterexamples}).
		Under this boundedness condition, we treat exponential weights $\rho=e^{F}$ under assumptions comparable to those in \cite{BallyMatoussi2001}; see Corollary~\ref{th:cor_of_main_theorem}.
		In particular, this shows that the additional integrability requirements on $\rho$ imposed in later formulations such as \cite{OuknineTurpin2006} are not needed in the present framework.
				
		\item \emph{Inverse flow SDE under reduced smoothness.}
		As a core tool for the main theorem, we show that the inverse flow satisfies an It\^o equation
		involving a backward It\^o integral together with an explicit correction term under reduced spatial regularity of $\sigma$ and weaker time assumptions (Proposition~\ref{th:inverse_flow_SDE}).
		This result provides a rigorous foundation for the inverse-flow strategy in our setting and may be of independent interest.
	\end{enumerate}
	
	\medskip
	\noindent
	\textbf{Ideas of the proof.}
	At a conceptual level, our approach is inspired by the stochastic flow/Jacobian method of Bally--Matoussi~\cite{BallyMatoussi2001} and by the weak-derivative viewpoint highlighted in Ouknine--Turpin~\cite{OuknineTurpin2006}.
	In contrast to the arguments in \cite{OuknineTurpin2006} that rely on results from Dirichlet form theory, we prepare the required weak differentiability properties of SDE solutions by a different and more direct approach.
	Moreover, we combine these ingredients with a stochastic Liouville-type representation for the Jacobian determinant.
	This allows us to formulate the key assumptions in terms of quantities such as $\tr\nabla b$ and $\tr \nabla\sigma_{(k)}$, thereby leading to conditions that remain verifiable under weaker hypotheses (for instance, bounded divergence of the drift) beyond the globally Lipschitz regime.
	
	Two technical points deserve emphasis.
	First, to justify the inverse-flow SDE in our low-regularity setting, we prove a stability result for mollified approximations adapted to the present assumptions (Lemma~\ref{th:approximation_molifier_1}).
	Second, the extension from global to local Lipschitz coefficients is achieved by a carefully designed approximation procedure, where an additional argument (Step~3 in the proof of
	Theorem~\ref{th:main_theorem_locally_Lipschitz}) is needed to retain uniform weighted estimates.
	
	\medskip
	\noindent
	\textbf{Organization of the paper.}
	The remainder of the paper is organized as follows.
	Section~\ref{section:Main_theorem} introduces the setting and states the main theorem and its corollaries.
	Section~\ref{section:inverse_flow_main} establishes the inverse-flow SDE with a backward It\^o integral under reduced regularity.
	Section~\ref{section:weak_diffrentability_of_sol_of_SDE} presents weak differentiability results for solutions to SDEs and derives the Jacobian estimates required in the proof.
	Section~\ref{section:proof_of_main_theorem} contains the proofs of the main results.
	Finally, Section~\ref{section:application} presents an application of norm equivalence to weighted integrability estimates for solutions of the associated PDEs.

	\section{Main results}\label{section:Main_theorem}
	For a matrix $A$, we write $|A|$ for its Frobenius norm and $|A|_{\mathrm{op}}$ for its operator norm.
	
	Let $T\in(0,\infty)$ and let $(\Omega,\mathscr F,(\mathscr F_t)_{t\in[0,T]},\Prob)$ be a filtered probability space satisfying the usual conditions. Let $(W_t)_{t\in[0,T]}$ be a $d'$-dimensional $(\mathscr F_t)$-Brownian motion on this space.
	
	Throughout this section, let $b:[0,T]\times\R^d\to\R^d$ and $\sigma:[0,T]\times\R^d\to\R^{d\times d'}$ be Borel measurable functions. Consider the following assumptions:	
	\begin{enumerate}[label={(A\arabic*)}]
		\item \label{item:asummption_of_main_theorem_2}
		There exists a nonnegative function $K\in L^1([0,T])$ such that
		\begin{align*}
			|b(r,x)|\leq K(r)(1+|x|), \quad
			|\sigma(r,x)|\leq K(r)^{1/2}(1+|x|),\quad x\in\R^d,\;\; r\in[0,T].
		\end{align*}
		
		\item \label{item:asummption_of_main_theorem_3}
		For every $N=1,2,\dots$, there exists a nonnegative function $K_N\in L^1([0,T])$ such that
		\begin{align*}
			&|b(r,x)-b(r,x')| \leq K_N(r)|x-x'|,\\
			&|\sigma(r,x)-\sigma(r,x')| \leq K_N(r)^{1/2}|x-x'|,\quad |x|,|x'|\leq N,\;\; r\in[0,T].
		\end{align*}
		
		\item \label{item:asummption_of_main_theorem_1}
		There exists a Borel measurable function $\widehat{\sigma}:[0,T]\times\R^d\to\R^d$ such that, for each $r$, the map $x\mapsto\widehat{\sigma}(r,x)$ is locally Lipschitz continuous and
		\begin{align}\label{eq:asummption_of_main_theorem_1}
			\widehat{\sigma}_i(r,x)=\sum_{\substack{1\leq j \leq d\\ 1\leq k \leq d'}}\sigma_{jk}(r,x)\partial_j\sigma_{ik}(r,x),\quad \text{a.e.}\;x,\;\; i=1,\dots,d.
		\end{align}
		 Here, for each $r$, $\partial_j\sigma_{ik}(r,x)$ denotes an a.e.-defined partial derivative of the locally Lipschitz function $x\mapsto\sigma_{ik}(r,x)$.
		
		\item \label{item:asummption_of_main_theorem_4}
		For every $N=1,2,\dots$, there exists a nonnegative function $\widehat{K}_N\in L^1([0,T])$ with $\lim_{N\to\infty}\|\widehat{K}_N\|_{L^1}/\log N=0$ such that
		\[
		|\widehat{\sigma}(r,x)| \leq \widehat{K}_N(r)(1+|x|),\quad |x|\leq N,\;\; r\in[0,T].
		\]
	\end{enumerate}
	
	Under these assumptions, for each $(t,x)\in[0,T]\times\R^d$, let $(X_s^{t,x})_{s\in[t,T]}$ be the solution to the SDE
	\begin{align}\label{eq:SDE}
		X_s^{t,x}=x+\int_t^s b(r,X_r^{t,x})\,dr+\int_t^s \sigma(r,X_r^{t,x})\,dW_r,\quad s\in[t,T].
	\end{align}
	For existence and uniqueness of solutions to such SDEs with (possibly time-dependent) locally Lipschitz coefficients, we refer the reader to, for example, \cite[Proposition~3.28]{PardouxRascanu2014} and \cite[Theorem~119]{Situ2005}.
	
	The following theorem is the main result of this paper.
	
	\begin{thm}[Main theorem]\label{th:main_theorem_locally_Lipschitz}
		Assume \ref{item:asummption_of_main_theorem_2}--\ref{item:asummption_of_main_theorem_4}.
		Let $\rho:\R^d\to(0,\infty)$ be a $C^2$ function, and suppose that there exists a nonnegative function $\widetilde{K}\in L^1([0,T])$ such that
		\begin{align}
			\begin{aligned}\label{eq:main_theorem_locally_Lipschitz'}
				&|\langle b(r,x),\nabla\rho(x)\rangle|+|\langle \widehat{\sigma}(r,x),\nabla\rho(x)\rangle|\leq \widetilde{K}(r)\rho(x),\\
				&|\sigma(r,x)||\nabla\rho(x)|\leq \widetilde{K}(r)^{1/2}\rho(x),\quad
				|\sigma(r,x)|^2|\nabla^2\rho(x)|_{\mathrm{op}}\leq \widetilde{K}(r)\rho(x)
			\end{aligned}
		\end{align}
		for every $x\in\R^d$ and $r\in[0,T]$, and such that
		\begin{align}
			\begin{aligned}\label{eq:main_theorem_locally_Lipschitz}
				&\bigg|\tr\nabla b(r,x)-\tr\nabla\widehat{\sigma}(r,x)+\frac12\sum_{k=1}^{d'}\tr\big[(\nabla\sigma_{(k)}(r,x))^2\big]\bigg|\leq \widetilde{K}(r),\\
				&\sum_{k=1}^{d'}\big(\!\tr\nabla\sigma_{(k)}(r,x)\big)^2\leq \widetilde{K}(r),\quad \text{a.e.}\; x\in\R^d,\;\; \text{a.e.}\; r\in[0,T],
			\end{aligned}
		\end{align}
		where $\sigma_{(k)}$ denotes the $k$-th column of $\sigma$. Then, for every Borel measurable function $\varphi:\R^d\to\R$ and $\psi:[0,T]\times\R^d\to\R$,
		\begin{align}
			&c\int_{\R^d}|\varphi(x)|\rho(x)\,dx\leq\int_{\R^d}\E[|\varphi(X^{t,x}_s)|]\rho(x)\,dx\leq C\int_{\R^d}|\varphi(x)|\rho(x)\,dx,\quad 0\leq t\leq s\leq T,\label{eq:norm_eq_result1}\\
			&\begin{aligned}
				c\int_{\R^d}\int_t^T|\psi(s,x)|\rho(x)\,ds\,dx
				&\leq\int_{\R^d}\int_t^T\E[|\psi(s,X^{t,x}_s)|]\rho(x)\,ds\,dx\\
				&\qquad\qquad\leq C\int_{\R^d}\int_t^T|\psi(s,x)|\rho(x)\,ds\,dx,\quad t\in[0,T].
			\end{aligned}\label{eq:norm_eq_result2}
		\end{align}
		Here $c=e^{-5\|\widetilde{K}\|_{L^1}}$ and $C=e^{5\|\widetilde{K}\|_{L^1}}$.
	\end{thm}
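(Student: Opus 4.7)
The plan is to combine an inverse-flow change of variables with a stochastic Liouville formula for the Jacobian determinant, after first reducing to a globally Lipschitz setting. Under the additional hypothesis that $b,\sigma,\widehat{\sigma}$ are globally Lipschitz in $x$ with time-integrable constants, Kunita's theory provides a flow of homeomorphisms $x\mapsto X^{t,s}(x):=X^{t,x}_s$. Writing $Y^{t,s}:=(X^{t,s})^{-1}$ and $J^{t,s}(y):=|\det\nabla_y Y^{t,s}(y)|$, the substitution $y=X^{t,s}(x)$ reduces the upper and lower bounds in \eqref{eq:norm_eq_result1} to the pointwise two-sided estimate
\begin{align*}
e^{-5\|\widetilde K\|_{L^1}}\rho(y)\le \E\big[\rho(Y^{t,s}(y))\,J^{t,s}(y)\big]\le e^{5\|\widetilde K\|_{L^1}}\rho(y),\quad y\in\R^d,
\end{align*}
while \eqref{eq:norm_eq_result2} follows from \eqref{eq:norm_eq_result1} by Fubini.

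The heart of the argument is then the analysis of $M_s:=\rho(Y^{t,s}(y))\,J^{t,s}(y)$. Proposition~\ref{th:inverse_flow_SDE} supplies an It\^o equation for $Y^{t,s}(y)$ involving a backward It\^o integral against $\sigma$ together with a deterministic correction in $\widehat{\sigma}$. Differentiating in $y$ and invoking the Jacobi/Liouville identity yields a scalar equation for $\log J^{t,s}(y)$ whose drift involves $\tr\nabla b$, $\tr\nabla\widehat{\sigma}$ and $\tr[(\nabla\sigma_{(k)})^2]$, while its backward diffusion coefficients are the $\tr\nabla\sigma_{(k)}$. Applying It\^o's formula to $\rho(Y^{t,s}(y))$ through the $\nabla\rho$ and $\nabla^2\rho$ controls in \eqref{eq:main_theorem_locally_Lipschitz'}, and combining the two equations via a product rule, I expect a decomposition of the form
\begin{align*}
dM_s=\alpha(s,\omega)\,M_s\,ds+dN_s,
\end{align*}
where $N$ is a (backward) local martingale and the drift satisfies the pointwise bound $|\alpha(s,\omega)|\le 5\,\widetilde K(s)$, the factor $5$ arising from gathering the contributions from \eqref{eq:main_theorem_locally_Lipschitz'} and \eqref{eq:main_theorem_locally_Lipschitz}. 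A standard localization based on the weak-derivative estimates of Section~\ref{section:weak_diffrentability_of_sol_of_SDE} removes the local-martingale issue, and Gronwall's inequality then yields the displayed two-sided estimate.

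The locally Lipschitz case is recovered by a truncation procedure. I would approximate $(b,\sigma,\widehat{\sigma})$ by globally Lipschitz $(b^{(N)},\sigma^{(N)},\widehat{\sigma}^{(N)})$ that coincide with the original coefficients on $\{|x|\le N\}$ and are constructed so that the bounds \eqref{eq:main_theorem_locally_Lipschitz'}--\eqref{eq:main_theorem_locally_Lipschitz} persist with the same $\widetilde K$ independent of $N$; localizing by the exit time of $X^{t,x}$ from $\{|x|\le N\}$, one passes to the limit via Fatou (lower bound) and dominated convergence (upper bound) to transfer the norm equivalence with the uniform constants $e^{\pm 5\|\widetilde K\|_{L^1}}$ to the original SDE. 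The main obstacle is the middle step: rigorously combining the backward It\^o calculus for $Y^{t,s}$ with a Liouville formula for $J^{t,s}$ under the reduced spatial regularity of $\sigma$, and justifying the chain and product rules leading to the drift identity $|\alpha|\le 5\widetilde K$. A secondary technical difficulty, flagged by the authors, is the design of the truncations in the final step so that the weighted bounds involving $\widehat{\sigma}$ and $\rho$ are preserved uniformly in $N$ without inflating $\|\widetilde K\|_{L^1}$.
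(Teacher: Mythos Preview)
Your high-level strategy matches the paper: inverse flow, change of variables, Liouville formula for the Jacobian, then truncation. There is a methodological difference in the core estimate. You propose an It\^o product rule for $M_s=\rho(Y)J$ and a drift bound $|\alpha|\le5\widetilde K$, whereas the paper never writes an equation for the product. Instead it separates the two factors by Cauchy--Schwarz, bounding $\E[(\rho(Y)/\rho(y))^{\pm2}]$ via Lemma~\ref{th:apriori_estimate_of_rho(X)} and $\E[|\det\widetilde{\nabla Y}|^{\pm2}]$ via Lemma~\ref{th:estimate_of_determinant_of_sol_of_SDE}; the constant $5=(c_2+c_2')/2=(6+4)/2$ comes from this splitting, and the lower bound is obtained from $\E[Z]\ge\E[Z^{-1}]^{-1}$ and $(a+\beta)^{-1}\ge a^{-1}-a^{-2}\beta$, not from a lower Gronwall argument on $M_s$. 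Your product-rule route may well work (possibly even with a sharper constant), but the paper's decoupled approach has the advantage that each piece is a standard It\^o/Gronwall estimate on a single process, avoiding any need to justify a simultaneous product computation under weak regularity.

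The genuine gap is in the truncation step. You propose globally Lipschitz approximants for which \eqref{eq:main_theorem_locally_Lipschitz'}--\eqref{eq:main_theorem_locally_Lipschitz} persist with the \emph{same} $\widetilde K$ independent of $N$. This is not achievable in general: assumption~\ref{item:asummption_of_main_theorem_4} only gives $|\widehat\sigma(r,x)|\le\widehat K_N(r)(1+|x|)$ on $\{|x|\le N\}$ with no uniform-in-$N$ control, so any cutoff of $\widehat\sigma$ necessarily inherits $\widehat K_{2n}$ in its global bounds. The paper's truncation (Step~1) accordingly satisfies the good bound $\widetilde K$ only on $\{|x|\le n\}$ and a larger $\widetilde K_n$ (containing $\widehat K_{2n}$) on all of $\R^d$. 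The missing idea is Step~3: one splits $\E[(\rho(Y)/\rho(y))^{\alpha}|\det\widetilde{\nabla Y}|^{\alpha}]$ according to whether the exit time $\tau_n^y$ of the \emph{inverse} flow $Y$ from $\{|x|\le n\}$ satisfies $\tau_n^y\ge t$ or $\tau_n^y<t$. On the first event the constant $\widetilde K$ applies and gives $C=e^{5\|\widetilde K\|_{L^1}}$; on the second one uses the bad $\widetilde K_n$ but controls the contribution by $C_n(1+|y|)$ with $C_n\to0$, balancing the growth $e^{c\|\widetilde K_n\|_{L^1}}$ against the tail $\Prob(\tau_n^y<t)\lesssim n^{-3}(1+|y|^3)$ via the hypothesis $\|\widehat K_N\|_{L^1}/\log N\to0$ in~\ref{item:asummption_of_main_theorem_4}. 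This splitting is precisely what recovers the uniform constants $e^{\pm5\|\widetilde K\|_{L^1}}$ in the locally Lipschitz setting, and explains why~\ref{item:asummption_of_main_theorem_4} is needed at all---an assumption that plays no role in your outline. (Note also that the relevant exit time is that of the inverse flow $Y$, not of $X^{t,x}$ as you write; the estimate \eqref{eq:main_theorem_locally_Lipschitz2} is stated for the approximating $X^{t,x,n}$ and the error term $C_n\int|\varphi|\rho(1+|x|)\,dx$ is then removed in the limit $n\to\infty$ by dominated convergence in Step~4.)
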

	
	In the globally Lipschitz case, and for weights $\rho$ of the following forms, we obtain the following corollary.
	This result extends \cite[Proposition~5.1]{BallyMatoussi2001} and \cite[Proposition~3.1]{OuknineTurpin2006} to the case where the Lipschitz constants are allowed to depend on time.
	\begin{cor}\label{th:cor_of_main_theorem}
		Assume \ref{item:asummption_of_main_theorem_2}--\ref{item:asummption_of_main_theorem_1} and, in addition, that for the same $K\in L^1([0,T])$ as in \ref{item:asummption_of_main_theorem_2},
		\begin{align}\label{eq:cor_of_main_theorem02}
			\begin{aligned}
				&|b(r,x)-b(r,x')|\leq K(r)|x-x'|,\quad
			|\sigma(r,x)-\sigma(r,x')|\leq K(r)^{1/2}|x-x'|,\\
			&|\widehat{\sigma}(r,x)-\widehat{\sigma}(r,x')|\leq K(r)|x-x'|,\qquad x,x'\in\R^d,\;\; r\in[0,T].
			\end{aligned}
		\end{align}
		If $\rho:\R^d\to(0,\infty)$ is one of the following, then there exist constants $c,C>0$ such that, for every Borel measurable $\varphi:\R^d\to\R$ and $\psi:[0,T]\times\R^d\to\R$, the bounds \eqref{eq:norm_eq_result1} and \eqref{eq:norm_eq_result2} hold:
		\begin{enumerate}
			\item For $\beta\in\R$, let $\rho(x)=(1+|x|^2)^{\beta}$. In this case $c$ and $C$ depend only on $d,\beta,$ and~$\|K\|_{L^1}$.
			\item Suppose, in addition, that
			\begin{align}\label{eq:cor_of_main_theorem01}
				|b(r,x)|\leq K(r),\quad |\sigma(r,x)|\leq K(r)^{1/2},\quad x\in\R^d,\;\; r\in[0,T].
			\end{align}
			Then we may take $\rho=\exp F$, where $ F:\R^d \to \R $ is a locally bounded Borel function which is $ C^2 $ on $ \{x\in\R^d \mid |x|>R_0 \} $ with bounded derivatives there, for some $ R_0>0 $.
		\end{enumerate}
	\end{cor}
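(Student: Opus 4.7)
The plan is to deduce both cases from Theorem~\ref{th:main_theorem_locally_Lipschitz} by verifying, for each weight $\rho$, the hypothesis \ref{item:asummption_of_main_theorem_4} and the quantitative bounds \eqref{eq:main_theorem_locally_Lipschitz'}--\eqref{eq:main_theorem_locally_Lipschitz} with a suitable $\widetilde K\in L^1([0,T])$. Under the global Lipschitz assumption \eqref{eq:cor_of_main_theorem02}, \ref{item:asummption_of_main_theorem_4} is immediate with $\widehat K_N(r)$ independent of $N$ (so the $\log N$ growth requirement is trivially satisfied), while the divergence-type estimates \eqref{eq:main_theorem_locally_Lipschitz} hold with $\widetilde K(r)=C_d K(r)$: the a.e.\ Frobenius norms $|\nabla b|$ and $|\nabla\widehat\sigma|$ are bounded by $K(r)$, and $|\nabla\sigma_{(k)}|$ by $K(r)^{1/2}$, so $|\tr\nabla b|,|\tr\nabla\widehat\sigma|$, $|\tr[(\nabla\sigma_{(k)})^2]|$, and $(\tr\nabla\sigma_{(k)})^2$ are all controlled by $C_d K(r)$. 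Thus everything reduces to producing an $L^1$ majorant $\widetilde K$ for the four quantities on the left of \eqref{eq:main_theorem_locally_Lipschitz'}.

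For Case~(1) I would compute $\nabla\rho(x)=2\beta(1+|x|^2)^{\beta-1}x$ and $\nabla^2\rho(x)=2\beta(1+|x|^2)^{\beta-1}I+4\beta(\beta-1)(1+|x|^2)^{\beta-2}x\otimes x$, giving $|\nabla\rho(x)|\leq 2|\beta|(1+|x|^2)^{\beta-1/2}$ and $|\nabla^2\rho(x)|_{\mathrm{op}}\leq C_\beta(1+|x|^2)^{\beta-1}$. Using the linear-growth bounds of \ref{item:asummption_of_main_theorem_2} and the bound $|\widehat\sigma(r,x)|\leq CK(r)(1+|x|)$ (which follows from \eqref{eq:asummption_of_main_theorem_1} together with the linear growth and Lipschitz bound on $\sigma$), each term on the left of \eqref{eq:main_theorem_locally_Lipschitz'} is dominated by $C_{d,\beta}K(r)\rho(x)$ or $C_{d,\beta}K(r)^{1/2}\rho(x)$. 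Taking $\widetilde K=C_{d,\beta}K$, the constants $c=e^{-5\|\widetilde K\|_{L^1}}$ and $C=e^{5\|\widetilde K\|_{L^1}}$ from the main theorem depend only on $d,\beta,\|K\|_{L^1}$, as claimed.

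For Case~(2) the subtlety is that $F$ is only assumed to be $C^2$ on $\{|x|>R_0\}$, whereas Theorem~\ref{th:main_theorem_locally_Lipschitz} requires a globally $C^2$ weight. I would bridge this gap by a smooth cut-off: fix $\chi\in C^\infty(\R^d,[0,1])$ with $\chi\equiv 0$ on $\{|x|\leq R_0+1\}$ and $\chi\equiv 1$ on $\{|x|\geq R_0+2\}$, and set $\tilde F=\chi F$. Then $\tilde F\in C^2(\R^d)$ since $F$ is $C^2$ on the support of $\chi$, and $\tilde F=F$ on $\{|x|\geq R_0+2\}$. Because $F$ is locally bounded, $\rho/\tilde\rho=e^{F-\tilde F}$ is bounded above and below on $\R^d$, so a norm equivalence proved for $\tilde\rho=e^{\tilde F}$ transfers to $\rho$ with the same structure but altered constants $c,C$. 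For $\tilde\rho$ one has $|\nabla\tilde F|,|\nabla^2\tilde F|\leq M$ on all of $\R^d$ (bounded on the transition region by construction, bounded outside by hypothesis), whence $|\nabla\tilde\rho|\leq M\tilde\rho$ and $|\nabla^2\tilde\rho|_{\mathrm{op}}\leq(M^2+M)\tilde\rho$. The extra boundedness \eqref{eq:cor_of_main_theorem01} then forces $|b|,|\widehat\sigma|\leq CK(r)$ and $|\sigma|\leq K(r)^{1/2}$, so each term in \eqref{eq:main_theorem_locally_Lipschitz'} is dominated by $C_MK(r)\tilde\rho$ or $C_MK(r)^{1/2}\tilde\rho$, and Theorem~\ref{th:main_theorem_locally_Lipschitz} applies with $\widetilde K=C_MK$.

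The main obstacle is Case~(2): exponential weights with $|\nabla F|$ growing in $|x|$ are already known to fail in general (Remark~\ref{rem:counterexamples}), and the boundedness hypothesis \eqref{eq:cor_of_main_theorem01} is precisely what allows $|\langle b,\nabla\rho\rangle|$ and $|\sigma||\nabla\rho|$ to be absorbed into $\widetilde K\rho$ without picking up any factor that grows in $|x|$. The cut-off/comparability step above is a routine but essential device for reconciling the $C^2$ framework of the main theorem with the milder regularity permitted in the corollary statement.
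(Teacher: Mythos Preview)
Your proposal is correct and follows essentially the same route as the paper: verify \ref{item:asummption_of_main_theorem_4} and \eqref{eq:main_theorem_locally_Lipschitz} from the global Lipschitz bounds, check \eqref{eq:main_theorem_locally_Lipschitz'} directly for the polynomial weight, and in Case~(2) replace $F$ by a smooth cutoff $\tilde F=\chi F$ vanishing near the origin so that $e^{\tilde F}$ is a globally $C^2$ weight comparable to $e^F$. The paper's argument is the same up to cosmetic differences (it writes the cutoff as $F(1-\chi(\cdot/R_0))$ and cites Remark~\ref{rem:apriori_estimate_of_rho(X)}(2) for the polynomial-weight computation rather than expanding $\nabla\rho,\nabla^2\rho$ by hand).
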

	
	\begin{rem}\label{rem:counterexamples}
		In (2) above, if one drops the boundedness condition \eqref{eq:cor_of_main_theorem01}, norm equivalence for exponential weights does not hold in general.
		Two simple counterexamples are the geometric Brownian motion and the Ornstein--Uhlenbeck process.
		
		To see this, we work in one dimension.  
		First, let $\alpha\neq 0$ and $\beta\in\R$, and consider the SDE
		\[
		dX_s^x = \beta X_s^x\,ds + \alpha X_s^x\,dW_s,\quad X_0^x=x.
		\]
		Since the solution is given by $ X_s^x = x \exp\{(\beta-\alpha^2/2)s + \alpha(W_s-W_0)\} $, we have $\E[e^{X_s^x}]=\infty$ for every $x>0$ whenever $s>0$. 
		Consequently,
		\[
		\int_{\R}\E[e^{X_s^x}]e^{-2|x|}\,dx = \infty,
		\quad\text{whereas}\quad
		\int_{\R} e^x e^{-2|x|}\,dx < \infty.
		\]
		
		Next, let $\lambda>0$, and consider the Ornstein--Uhlenbeck SDE
		\[
		dX_s^x = -\lambda X_s^x\,ds + dW_s,\quad X_0^x = x.
		\]
		For $s>0$, the solution $X_s^x$ is Gaussian with mean $x e^{-\lambda s}$ and variance
		$(1-e^{-2\lambda s})/(2\lambda)$.  Let $p(s,x,y)$ denote its transition density.  
		Then for any Borel function $\varphi$ and any $a\in\R$,
		\[
		\int_{\R}\E[|\varphi(X_s^x)|] e^{ax}\,dx
		= \int_{\R} |\varphi(y)| 
		\bigg( \int_{\R} p(s,x,y) e^{ax}\,dx \bigg)\,dy.
		\]
		Thus, for norm equivalence to hold, there must exist constants $c,C>0$ such that
		\[
		c e^{ay}
		\le \int_{\R} p(s,x,y) e^{ax}\,dx
		\le C e^{ay},
		\quad y\in\R.
		\]
		However, this fails unless $a=0$.
	\end{rem}
	
	\begin{rem}\label{rem:generalization_of_cor_1}
		Corollary~\ref{th:cor_of_main_theorem} can be extended to the generalized equivalence of norm principle in \cite{ZhangZhao2007}, in which the functions $\varphi$ and $\psi$ are allowed to be random.
		We postpone the details of this extension to Remark~\ref{rem:generalization_of_cor_2}, which appears after the proof of Corollary~\ref{th:cor_of_main_theorem}.
	\end{rem}

	\section{Inverse flow}\label{section:inverse_flow_main}
	This section is devoted to the study of the inverse stochastic flow, which, under suitable assumptions, satisfies a stochastic differential equation. 
	This representation forms one of the key tools in the proof of the main theorem.
	
	\subsection{Approximation by mollifiers}
	First, we provide stability properties of the space-time mollification, which are used in the argument for the inverse flow (see Subsection~\ref{section:inverse_flow}).
	To this end, we begin with the following lemma.
	
	\begin{lem}\label{th:continuity_of_translation_in_Lp}
		Let $ f:\R \times \R^d \to \R^{d_0} $ be Borel measurable and assume that for each $ r\in \R $, the map $ x\mapsto f(r,x) $ is continuous. Let $ p\geq 1 $ and suppose there exists a nonnegative function $ K\in L^p(\R) $ such that
		\begin{equation}\label{eq:continuity_of_translation_in_Lp}
			|f(r,x)|
			\leq K(r)(1+ |x|),\quad x\in \R^d,\;\; r\in \R			
		\end{equation}
		Then, for any $ R>0 $,
		\[
		\lim_{s\to 0,\,y\to 0}\int_0^T \sup_{|x|\leq R}|f(r-s,x-y)-f(r,x)|^p \,dr=0.
		\]
	\end{lem}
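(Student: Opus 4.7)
The plan is to split the quantity into an $x$-translation piece and an $r$-translation piece via
\[
\sup_{|x|\le R}|f(r-s,x-y)-f(r,x)|\le \sup_{|x|\le R}|f(r-s,x-y)-f(r-s,x)|+\sup_{|x|\le R}|f(r-s,x)-f(r,x)|,
\]
and then to bound each piece, integrated in $r$ on $[0,T]$, using only the linear growth hypothesis \eqref{eq:continuity_of_translation_in_Lp}, the uniform continuity of $f(r,\cdot)$ on compact sets, and the classical $L^p$-translation continuity for scalar functions on the line. As a common tool I introduce the $\varrho$-dependent modulus of continuity
\[
\omega(\varrho,\delta):=\sup_{\substack{|x|,|x'|\le R+1\\ |x-x'|\le\delta}}|f(\varrho,x)-f(\varrho,x')|,
\]
which is finite for every $\varrho$ (since $f(\varrho,\cdot)$ is continuous on the compact ball $\overline{B_{R+1}}$), tends to $0$ as $\delta\to 0$ by uniform continuity, and is dominated by $2K(\varrho)(2+R)\in L^p(\R)$.

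For the $y$-shift piece I would, under $|y|\le 1$, bound it pointwise by $\omega(r-s,|y|)$ and use a change of variables plus dominated convergence to get
\[
\int_0^T\omega(r-s,|y|)^p\,dr\le\int_{-1}^{T+1}\omega(r,|y|)^p\,dr\longrightarrow 0\quad\text{as }y\to 0,
\]
uniformly for $|s|\le 1$.

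For the $r$-shift piece I would fix $\delta>0$, choose a finite $\delta$-net $\{x_1,\dots,x_N\}\subset\overline{B_R}$, and, for each $x\in\overline{B_R}$ with $|x-x_i|\le\delta$, write
\[
|f(r-s,x)-f(r,x)|\le\omega(r-s,\delta)+|f(r-s,x_i)-f(r,x_i)|+\omega(r,\delta).
\]
Taking the supremum over $x$ (which turns the middle term into a maximum over $i\in\{1,\dots,N\}$) and then the $L^p(0,T)$-norm gives
\[
\bigl\|\sup_{|x|\le R}|f(\cdot-s,x)-f(\cdot,x)|\bigr\|_{L^p(0,T)}\le 2\|\omega(\cdot,\delta)\|_{L^p(-1,T+1)}+\sum_{i=1}^{N}\|f(\cdot-s,x_i)-f(\cdot,x_i)\|_{L^p(0,T)}.
\]
Each scalar function $r\mapsto f(r,x_i)$ lies in $L^p(\R)$ by \eqref{eq:continuity_of_translation_in_Lp}, so the classical $L^p$-translation continuity on the real line makes the finite sum vanish as $s\to 0$ with $\delta$ fixed; letting $\delta\to 0$ afterwards and invoking dominated convergence for $\omega(\cdot,\delta)$ handles the remaining term. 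Combining with the $y$-shift bound via the triangle inequality in $L^p(0,T)$ finishes the proof.

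The only non-routine point is the $r$-shift: since no continuity of $f$ in $r$ is assumed, pointwise convergence of $\sup_{|x|\le R}|f(r-s,x)-f(r,x)|$ in $r$ is not directly available. The $\delta$-net reduction trades the uncountable supremum for finitely many scalar $L^p$-translation statements at the cost of two residual moduli of continuity, and the correct order of limits ($s\to 0$ first, then $\delta\to 0$) is what closes the argument.
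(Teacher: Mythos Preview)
Your proof is correct. The overall decomposition into a spatial shift and a temporal shift is the same as in the paper (the paper inserts $f(r,x-y)$ rather than $f(r-s,x)$, but this is immaterial), and your treatment of the $y$-shift via the modulus $\omega$ and dominated convergence is essentially what the paper does for its $I_2$ term.

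The genuine difference is in the $r$-shift. The paper packages $r\mapsto f(r,\cdot)|_{\overline{B_{R+1}}}$ as a Borel map $F:\R\to C(\overline{B_{R+1}})$ into a separable Banach space, observes that $F\in L^p(\R;C(\overline{B_{R+1}}))$ by the growth bound, and then invokes the continuity of translations in Bochner $L^p$-spaces (citing \cite{Arendt_etal.2011}) to conclude directly. Your route is more elementary: the $\delta$-net reduces the uncountable supremum to finitely many scalar $L^p$-translation statements plus two $\omega$-terms, and the order of limits (first $s\to 0$, then $\delta\to 0$) closes the argument without any vector-valued integration theory. What the paper's approach buys is brevity and a one-line appeal to a known result; what yours buys is self-containment---you use only the classical scalar fact that $\|g(\cdot-s)-g\|_{L^p(\R)}\to 0$ for $g\in L^p(\R)$, which is in every first course. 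In effect, your $\delta$-net argument is a direct proof of the Bochner translation continuity in this particular case.
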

	
	\begin{proof}
		It suffices to consider the case $ d_0=1 $. Let $ s\in\R,\,y\in \R^d $ with $ |s|\leq 1,\, |y|\leq 1 $. Observe that
		\begin{align*}
			&\int_0^T \sup_{|x|\leq R}|f(r-s,x-y)-f(r,x)|^p \,dr\\
			&\leq 2^p\int_0^T \sup_{|x|\leq R}|f(r-s,x-y)-f(r,x-y)|^p \,dr + 2^p\int_0^T \sup_{|x|\leq R}|f(r,x-y)-f(r,x)|^p \,dr\\
			&=: 2^p(I_1(s,y) + I_2(y)) .
		\end{align*}
		First, for fixed $ r\in \R $, the map $ x\mapsto f(r,x) $ is continuous, hence uniformly continuous on $ \{x\in\R^d\mid |x|\le R+1\} $. Therefore, by \eqref{eq:continuity_of_translation_in_Lp} and the dominated convergence theorem, $ \lim_{y\to 0}I_2(y)=0 $.
		
		We next show $ \lim_{s\to 0,\,y\to 0}I_1(s,y)=0 $. Let $ B_{R+1}:=\{x\in \R^d \mid |x|\leq R+1 \} $ and define $ F:\R\to C(B_{R+1}) $ by $ F(r) := f(r,\cdot\,)|_{B_{R+1}}$ for $r \in \R$.
		The space $ C(B_{R+1}) $ is a separable Banach space under the supremum norm $ \|\cdot\|_{C(B_{R+1})} $, and for every $ g \in C(B_{R+1}) $, the map $r\mapsto \|F(r)-g\|_{C(B_{R+1})} = \sup_{|x|\leq R+1}|f(r,x)-g(x)|$ is Borel measurable; hence $ F $ is a Borel measurable function with separable range.
		Consequently, by \eqref{eq:continuity_of_translation_in_Lp} we have $ F\in L^p(\R; C(B_{R+1})) $. Therefore, by the continuity of translations in Bochner $ L^p $-spaces (see, e.g., \cite[Section 1.1]{Arendt_etal.2011}),
		\begin{align*}
			\sup_{|y|\leq 1} I_1(s,y)
			&\leq  \int_{\R} \|F(r-s)-F(r) \|_{C(B_{R+1})}^p\,dr
			\to 0\quad (s\to 0).
		\end{align*}
		Hence $ \lim_{s\to 0,\,y\to 0}I_1(s,y)=0 $. This completes the proof.
	\end{proof}
	
	By a sequence of mollifiers on $ \R^{d_1} $, we mean a sequence of nonnegative functions $ (\rho_n)_{n\geq 1} $ on $ \R^{d_1} $ such that
	\[
	\rho_n\in C_c^\infty(\R^{d_1}),\quad
	\supp \rho_n \subset \{z\in \R^{d_1}\mid |z|\leq 1/n \},\quad
	\int_{\R^{d_1}} \rho_n(z)\,dz=1 .
	\]
	
	Using the previous lemma, we now establish the following stability result for space-time mollification. (These will be used later in the proof of Lemma~\ref{th:inverse_flow_stochastic_integral}.)
	
	\begin{lem}\label{th:approximation_molifier_1}
		Let $ (\rho_n)_{n\geq 1} $ and $ (\rho_n^1)_{n\geq 1} $ be sequences of mollifiers on $ \R^d $ and on $ \R $, respectively.
		\begin{enumerate}
			\item Let $ f:\R \times \R^d \to \R^{d_0} $ be Borel measurable, and assume that for each $ r\in \R $, the map $ x\mapsto f(r,x) $ is continuous. Let $ p\geq 1 $ and suppose there exists a nonnegative function $ K\in L^p(\R) $ such that
			\[
			|f(r,x)|
			\leq K(r)(1+ |x|),\quad x\in \R^d,\;\; r\in \R .
			\]
			Define
			\[
			f_n(r,x) := \int_{\R}\int_{\R^d} f(r-s,x-y) \rho_n(y)\rho_n^1(s)\,dy\,ds, \quad (r,x)\in \R\times \R^d .
			\]
			Then, for any $ R>0 $,
			\[
			\lim_{n\to \infty}\int_0^T \sup_{|x|\leq R}|f_n(r,x)-f(r,x)|^p \,dr=0 .
			\]			
			\item Let $ f,\,g:\R \times \R^d \to \R^{d_0} $ and $ F:\R \times \R^d \to \R $ be Borel measurable. Assume that for each $ r\in \R $, the maps $ x\mapsto f(r,x) $ and $ x\mapsto F(r,x) $ are continuous, and
			\[
			F(r,x)= \langle f(r,x), g(r,x) \rangle,\quad \text{a.e.}\;x\in\R^d .
			\]
			Suppose in addition that there exists a nonnegative function $ K\in L^1(\R) $ such that
			\[
			|f(r,x)|\leq K(r)(1+ |x|), \quad |g(r,x)|\leq K(r)^{1/2},\quad  x\in \R^d,\;\;  r\in \R .
			\]
			For $ (r,x)\in \R\times \R^d $, set
			\begin{align*}
				&g_n(r,x) := \int_{\R}\int_{\R^d} g(r-s,x-y) \rho_n(y)\rho_n^1(s)\,dy\,ds
			\end{align*}
			and $F_n(r,x):= \langle f(r,x), g_n(r,x) \rangle$. Then, for any $ R>0 $,
			\[
			\lim_{n\to \infty}\int_0^T \sup_{|x|\leq R}|F_n(r,x)-F(r,x)| \,dr=0.
			\]
		\end{enumerate}
	\end{lem}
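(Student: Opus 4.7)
\textbf{Proof plan for Lemma~\ref{th:approximation_molifier_1}.}
My plan is to prove part (1) by a standard mollifier-convergence argument reducing to Lemma~\ref{th:continuity_of_translation_in_Lp}, and to prove part (2) by a careful decomposition that uses the a.e.\ identity $F=\langle f,g\rangle$ to reduce matters to mollifications of $f$ (to which part (1) applies) plus a translation-of-$F$ term.

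For \textbf{part (1)}, I would reduce to the scalar case $d_0=1$ componentwise. Since $\rho_n(y)\rho_n^1(s)\,dy\,ds$ is a probability density, Jensen's inequality gives
\begin{equation*}
|f_n(r,x)-f(r,x)|^p \leq \int_{\R}\int_{\R^d} |f(r-s,x-y)-f(r,x)|^p \rho_n(y)\rho_n^1(s)\,dy\,ds.
\end{equation*}
Taking $\sup_{|x|\leq R}$ inside the right-hand side (dominating the sup of the integral by the integral of the pointwise sup) and then integrating in $r$ with Fubini--Tonelli, everything reduces to controlling $I(s,y):=\int_0^T \sup_{|x|\leq R}|f(r-s,x-y)-f(r,x)|^p\,dr$. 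By Lemma~\ref{th:continuity_of_translation_in_Lp}, $I(s,y)\to 0$ as $(s,y)\to 0$, and the growth bound gives a uniform constant dominating $I(s,y)$ for $|s|,|y|\leq 1$. Since $\rho_n\rho_n^1$ is supported in $\{|s|\leq 1/n,\,|y|\leq 1/n\}$, bounded convergence concludes.

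For \textbf{part (2)}, the structural observation is that both $F(r,\cdot)$ and $F_n(r,\cdot)=\langle f(r,\cdot),g_n(r,\cdot)\rangle$ are continuous in $x$ (the latter because $g_n$ is smooth in $x$), so $\sup_{|x|\leq R}|F_n-F|=\esssup_{|x|\leq R}|F_n-F|$ and may be bounded by any a.e.\ pointwise estimate. Using the a.e.\ identity $F=\langle f,g\rangle$ together with $\iint\rho_n\rho_n^1=1$, I would write, for a.e.\ $x$,
\begin{align*}
F_n(r,x)-F(r,x)
&= \int_{\R}\int_{\R^d}\big[\langle f(r,x)-f(r-s,x-y),g(r-s,x-y)\rangle \\
&\qquad\qquad\qquad + F(r-s,x-y)-F(r,x)\big]\rho_n(y)\rho_n^1(s)\,dy\,ds
=: \mathrm{I}+\mathrm{II}.
\end{align*}
Term $\mathrm{I}$ is dominated pointwise by $\iint|f(r,x)-f(r-s,x-y)|K(r-s)^{1/2}\rho_n\rho_n^1\,dy\,ds$ using $|g|\leq K^{1/2}$, after which the argument of part (1) applied to $f$ takes over. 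Term $\mathrm{II}$ involves only translations of the continuous function $F$.

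The \textbf{main obstacle} I foresee is that a naive application of Lemma~\ref{th:continuity_of_translation_in_Lp} to $F$ would require $|F|\leq\widetilde K(r)(1+|x|)$ with $\widetilde K\in L^1$, whereas only $\widetilde K=K^{3/2}$ is naturally available, and an analogous tension appears in $\mathrm{I}$ (a Cauchy--Schwarz in $r$ would need $K\in L^2$). I plan to resolve both issues by a truncation argument: set $A_M=\{r\in[0,T]:K(r)\leq M\}$. On $A_M$ one has $K^{3/2}\leq M^{1/2}K\in L^1$ and $K^{1/2}\leq M^{1/2}$, so Lemma~\ref{th:continuity_of_translation_in_Lp} and part (1) apply for each fixed $M$ and drive $\mathrm{I}$ and $\mathrm{II}$ to zero as $n\to\infty$. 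On $A_M^c$, Chebyshev gives $|A_M^c|\leq\|K\|_{L^1}/M$ and $\|K^{1/2}\chi_{A_M^c}\|_{L^1}\leq M^{-1/2}\|K\|_{L^1}$, which suppresses the residual contribution uniformly in $n$. Sending $n\to\infty$ for fixed $M$ and then $M\to\infty$ should close the argument.
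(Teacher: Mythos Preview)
Your argument for part~(1) is correct and coincides with the paper's. For part~(2), your decomposition into $\mathrm{I}$ and $\mathrm{II}$ is exactly the paper's splitting $F_n-F=(F_n-\widetilde F_n)+(\widetilde F_n-F)$ with $\widetilde F_n$ the space--time mollification of $F$, and your remark that an a.e.\ bound suffices (by continuity in $x$) is the paper's as well.

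You have correctly located the obstacle, but your truncation cannot close it. On $A_M^c$ you must control $\int_{A_M^c}\sup_{|x|\le R}|F_n-F|\,dr$, which is dominated by $(1+R)\int_{A_M^c}K^{3/2}\,dr$ plus a mixed term, and the first of these need not even be finite (e.g.\ $K(r)=r^{-2/3}\mathbf{1}_{(0,1]}(r)\in L^1$ with $K^{3/2}=r^{-1}\notin L^1$). In fact part~(2) is \emph{false} under the stated hypothesis $|f|\le K(1+|x|)$: with $f(r,x)=K(r)$, $g(r,x)=K(r)^{1/2}$, $F=K^{3/2}$ (all constant in $x$), one has $F_n=K\cdot(K^{1/2}*\rho_n^1)\in L^1([0,T])$ for each $n$ while $\int_0^T|F|\,dr=\infty$, hence $\int_0^T|F_n-F|\,dr=\infty$ for every $n$. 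The paper's own proof writes $|F|\le|f||g|\le K(1+|x|)$, which holds only under the stronger hypothesis $|f|\le K^{1/2}(1+|x|)$; this is precisely what the sole application (Step~2 of Lemma~\ref{th:inverse_flow_stochastic_integral}, where the role of $f$ is played by $\sigma$ with $|\sigma|\le K^{1/2}(1+|x|)$) actually uses. Under that corrected hypothesis no truncation is needed: $K^{1/2}\in L^2$ handles term~$\mathrm{I}$ via Cauchy--Schwarz and Lemma~\ref{th:continuity_of_translation_in_Lp} with $p=2$, and $|F|\le K(1+|x|)$ with $K\in L^1$ lets part~(1) dispose of term~$\mathrm{II}$.
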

	
	\begin{proof}
		(1) Since $ \int_{\R} \rho_n^1(s)\,ds= 1 $ and $ \int_{\R^d}\rho_n(y)\,dy = 1 $, H\"older's inequality yields
		\begin{align*}
			|f_n(r,x)-f(r,x)|^p
			&\leq \int_{\R}\int_{\R^d} |f(r-s,x-y)-f(r,x)|^p \,\rho_n(y)\rho_n^1(s)\,dy\,ds .
		\end{align*}
		Hence, by Lemma~\ref{th:continuity_of_translation_in_Lp},
		\begin{align*}
			\int_0^T \sup_{|x|\leq R}|f_n(r,x)-f(r,x)|^p \,dr
			&\leq  \sup_{\substack{|s|\leq 1/n\\ |y|\leq 1/n}}\int_0^T\sup_{|x|\leq R}|f(r-s,x-y)-f(r,x)|^p\,dr
			\to 0
		\end{align*}
		as $n\to\infty$.
		
		(2) Define
		\[
		\widetilde{F}_n(r,x)
		:= \int_{\R}\int_{\R^d} F(r-s,x-y) \rho_n(y)\rho_n^1(s)\,dy\,ds, \quad (r,x)\in \R\times \R^d .
		\]
		Then
		\begin{align*}
			&\int_0^T \sup_{|x|\leq R}|F_n(r,x)-F(r,x)|\,dr\\
			&\leq \int_0^T \sup_{|x|\leq R} |F_n(r,x)-\widetilde{F}_n(r,x)|\,dr + \int_0^T \sup_{|x|\leq R}|\widetilde{F}_n(r,x)-F(r,x)|\,dr
			=: I^1_n + I^2_n.
		\end{align*}
		By the assumptions, for each $r\in\R$ we have
		\begin{align*}
			|F(r,x)|
			\leq  |f(r,x)||g(r,x)|
			\leq K(r)(1+ |x|)\quad \text{for a.e.}\;x\in \R^d,
		\end{align*}
		and by the continuity of $x\mapsto F(r,x)$ the same bound holds for all $x$. Hence, by part (1), $ I^2_n\to 0 $ as $ n\to\infty $.
		To estimate $ I^1_n $, note that
		\begin{align*}
			F_n(r,x)
			&= \int_{\R}\int_{\R^d} \langle f(r,x), g(r-s,x-y) \rangle \rho_n(y)\rho_n^1(s)\,dy\,ds,\\
			\widetilde{F}_n(r,x)
			&= \int_{\R}\int_{\R^d} \langle f(r-s,x-y), g(r-s,x-y) \rangle \rho_n(y)\rho_n^1(s)\,dy\,ds,
		\end{align*}
		so by the Schwarz inequality,
		\begin{align*}
			|F_n(r,x)-\widetilde{F}_n(r,x)|
			\begin{multlined}[t]
				\leq  \bigg(\int_{\R}\int_{\R^d} |f(r,x)- f(r-s,x-y)|^2 \rho_n(y)\rho_n^1(s)\,dy\,ds\bigg)^{1/2}\qquad\\
				\bigg(\int_{\R}\int_{\R^d} |g(r-s,x-y)|^2\rho_n(y)\rho_n^1(s)\,dy\,ds \bigg)^{1/2}.
			\end{multlined}
		\end{align*}
		Using again Schwarz together with $ |g(r-s,x-y)|\leq K(r-s)^{1/2} $,
		\begin{align*}
			I^1_n
			&\begin{multlined}[t]
				\leq \bigg(\int_0^T \sup_{|x|\leq R}\int_{\R}\int_{\R^d} |f(r,x)- f(r-s,x-y)|^2 \rho_n(y)\rho_n^1(s)\,dy\,ds\,dr\bigg)^{1/2} \\
				\bigg(\int_0^T\int_{\R}\int_{\R^d}K(r-s)\rho_n(y)\rho_n^1(s)\,dy\,ds\,dr \bigg)^{1/2}
			\end{multlined}\\
			&\leq \|K\|_{L^1}^{1/2}\bigg(\sup_{\substack{|s|\leq 1/n\\ |y|\leq 1/n}}\int_0^T\sup_{|x|\leq R}|f(r-s,x-y)-f(r,x)|^2\,dr\bigg)^{1/2}.
		\end{align*}
		By Lemma~\ref{th:continuity_of_translation_in_Lp} the right-hand side tends to $ 0 $ as $ n\to\infty $. This completes the proof.
	\end{proof}
	
	\subsection{Backward It\^o integral}
	We recall the backward It\^o integral (see, e.g., \cite{Kunita1984,Kunita1990} and \cite{Kunita2019}). In this subsection fix $s\in(0,T]$. Then
	\[
	\widetilde{W}_r:=W_s-W_{s-r},\quad r\in[0,s],
	\]
	is a $d'$-dimensional Brownian motion. Set
	\[
	\mathscr{G}_r:= \sigma(\,\widetilde{W}_t \mid t\in[0,r]\,)\vee \mathscr{N},\quad r\in[0,s],
	\]
	where $\mathscr{N}$ denotes the set of  $\Prob $-null sets.
	
	\begin{dfn}\label{dfn:backward_Ito_integral}
		Let $(f_r)_{r\in[0,s]}$ be a real-valued process such that $(f_{s-r})_{r\in[0,s]}$ is $(\mathscr{G}_r)_{r\in[0,s]}$-progressively measurable and $\int_{0}^{s} |f_r|^2\,dr < \infty\;\text{a.s.}$
		For $k=1,\dots,d'$, define
		\[
		\int_{t}^{s} f_r\,\widehat{d}W_r^k := \int_{0}^{s-t} f_{s-r}\,d\widetilde{W}_r^k,\quad t\in[0,s],
		\]
		and call this the backward It\^o integral. Here the right-hand side is the usual stochastic integral with respect to the $(\mathscr{G}_r)_{r\in[0,s]}$-Brownian motion.
	\end{dfn}

	\begin{rem}\label{rem:backward_Ito_integral}
		The above definition of the backward It\^o integral is equivalent to that in \cite{Kunita1984,Kunita1990}. Indeed, if $(f_r)_{r\in[0,s]}$ is continuous and satisfies the hypotheses of Definition~\ref{dfn:backward_Ito_integral}, then for a partition $\Delta: t=r_0<r_1<\cdots <r_n=s$ of $[t,s]$, we have convergence in probability
		\[
		\int_{t}^{s} f_r\,\widehat{d}W_r^k
		= \lim_{|\Delta|\to 0}\sum_{j=0}^{n-1}f_{r_{j+1}}(W_{r_{j+1}}^k - W_{r_{j}}^k).
		\]
		To see this, set $u_j:=s-r_{n-j}$; then
		\begin{align*}
			\sum_{j=0}^{n-1}f_{r_{j+1}}(W_{r_{j+1}}^k-W_{r_{j}}^k)
			&= \sum_{j=0}^{n-1}f_{s-u_j}(W_{s-u_j}^k-W_{s-u_{j+1}}^k)
			= \sum_{j=0}^{n-1}f_{s-u_j}(\widetilde{W}_{u_{j+1}}^k-\widetilde{W}_{u_j}^k),
		\end{align*}
		from which the claim follows.
	\end{rem}

	\subsection{Inverse flow}\label{section:inverse_flow}
	We now discuss the SDE and the (inverse) flow in our setting. Throughout this subsection, we work under the following standing assumptions:
	
	\begin{assum}\label{assum:globally_lipschitz_coefficients}
		$ b: [0,T] \times \R^d\to \R^d $ and $ \sigma: [0,T] \times \R^d\to \R^{d\times d'} $ are Borel measurable, and there exists a nonnegative function $ K\in L^1([0,T]) $ such that
		\begin{align*}
			&|b(r,x)-b(r,x')| \leq K(r)|x-x'|,\quad
			|b(r,x)| \leq K(r)(1+|x|),\notag\\
			&|\sigma(r,x)-\sigma(r,x')| \leq K(r)^{1/2}|x-x'|,\quad
			|\sigma(r,x)| \leq K(r)^{1/2}(1+|x|)
		\end{align*}
		for all $x,x'\in\R^d$ and $r\in[0,T]$.
	\end{assum}
	
	Under this assumption, for each $ (t,x) \in [0,T]\times\R^d  $ let $ (X_s^{t,x})_{s\in[t,T]} $ denote the solution to the SDE \eqref{eq:SDE}. We have the following standard estimates.
	
	\begin{lem}\label{th:SDE_standard_estim}
		Let Assumption~\ref{assum:globally_lipschitz_coefficients} hold. Then for any $ p\geq 2 $, there exists a constant $ C_p>0 $ depending only on $ p $ such that for all $ t\in[0,T] $ and $ x,x'\in \R^d $,
		\begin{align*}
			&\E\bigg [\sup_{ s \in[t,T]}|X_s^{t,x}|^p\bigg ]
			\leq C_p(1+|x|^p),\quad
			\E\bigg [\sup_{ s \in[t,T]}|X_s^{t,x}-X_s^{t,x'}|^p\bigg ]
			\leq C_p|x-x'|^p.
		\end{align*}
	\end{lem}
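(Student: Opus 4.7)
The plan is to establish both estimates by the standard route: Burkholder--Davis--Gundy for the stochastic integral, H\"older (with respect to the finite measure $K(r)\,dr$) for the drift and for the quadratic variation term, and then a Gr\"onwall argument adapted to $L^1$-integrable constants. The key observation is that although $K$ is only in $L^1([0,T])$, the growth hypothesis $|\sigma(r,x)|\le K(r)^{1/2}(1+|x|)$ makes $|\sigma|^2$ controlled by $K(r)$ times a polynomial in $|X_r|$, so all integrals naturally acquire the weight $K(r)\,dr$ of finite total mass $\|K\|_{L^1}$.

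For the first estimate, I would first perform a localization by stopping times $\tau_N:=\inf\{s\ge t : |X_s^{t,x}|\ge N\}\wedge T$ to ensure the quantities to be bounded are finite; then work on $[t,\tau_N]$ and pass $N\to\infty$ by Fatou. Setting $\phi_N(s):=\E\!\left[\sup_{s'\in[t,s\wedge\tau_N]}|X_{s'}^{t,x}|^p\right]$, standard inequalities give
\begin{align*}
\phi_N(s) \le C_p|x|^p + C_p\E\!\left[\Big(\textstyle\int_t^{s\wedge\tau_N} K(r)(1+|X_r^{t,x}|)\,dr\Big)^p\right] + C_p\E\!\left[\Big(\textstyle\int_t^{s\wedge\tau_N} |\sigma(r,X_r^{t,x})|^2\,dr\Big)^{p/2}\right].
\end{align*}
For each of the two integral terms, H\"older's inequality against the probability measure $K(r)\,dr/\|K\|_{L^1}$ (using $p\ge 2$ so that $p/2\ge 1$) produces the bound
\[
C_p\big(1+\|K\|_{L^1}^p\big)\,|x|^p \;+\; C_p\,\big(1+\|K\|_{L^1}^{p-1}\big)\int_t^s K(r)\,\phi_N(r)\,dr,
\]
where the BDG constant is absorbed into $C_p$. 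Since $\phi_N$ is a priori bounded (thanks to the localization), Gr\"onwall's lemma with the finite measure $K(r)\,dr$ gives $\phi_N(T)\le C_p(1+|x|^p)\exp(C_p\|K\|_{L^1})$ uniformly in $N$, and letting $N\to\infty$ yields the first bound.

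The second estimate is proved by the same template applied to the difference process $Y_s:=X_s^{t,x}-X_s^{t,x'}$. By the Lipschitz hypotheses $|b(r,X_r^{t,x})-b(r,X_r^{t,x'})|\le K(r)|Y_r|$ and $|\sigma(r,X_r^{t,x})-\sigma(r,X_r^{t,x'})|^2\le K(r)|Y_r|^2$, which are again controlled by the weight $K(r)\,dr$. An identical BDG+H\"older computation (this time the $|x-x'|^p$ term replaces $|x|^p$ and there is no pure additive constant) yields
\[
\psi(s)\le C_p|x-x'|^p + C_p\int_t^s K(r)\,\psi(r)\,dr,
\]
where $\psi(s):=\E[\sup_{s'\in[t,s]}|Y_{s'}|^p]$; finiteness of $\psi(T)$ follows from the first part via the triangle inequality, and Gr\"onwall concludes.

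The only nontrivial point is the combination of the $L^1$-in-time Lipschitz/growth bounds with $p/2\ge 1$ for the BDG term: one must apply H\"older with respect to the finite measure $K(r)\,dr$ rather than Lebesgue measure in order to recover a linear integral inequality in $\phi$ (or $\psi$) with the integrable kernel $K$, which is exactly what the $L^1$-Gr\"onwall lemma consumes. This is the conceptual step where the half-power scaling of $\sigma$ in Assumption~\ref{assum:globally_lipschitz_coefficients} is essential; everything else is routine.
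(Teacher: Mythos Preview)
Your argument is correct and is essentially the same approach the paper takes: the paper's proof simply cites Lemmas~\ref{th:SDE_apriori1} and~\ref{th:SDE_apriori_diff}, which encode precisely the BDG + H\"older-against-$K(r)\,dr$ + Gr\"onwall scheme you describe (with the proof of Lemma~\ref{th:SDE_apriori1} in turn deferred to \cite[Proposition~3.28]{PardouxRascanu2014}). The only organizational difference is that the paper first abstracts the computation into a general a priori bound with free inhomogeneities $b_0,\sigma_0$ and then specializes, whereas you carry it out directly.
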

	
	\begin{proof}
		This follows from Lemmas~\ref{th:SDE_apriori1} and \ref{th:SDE_apriori_diff}.
	\end{proof}
	
	We also provide an estimate concerning the approximation of quadratic covariations.
	
	\begin{lem}\label{th:estim_of_approximation_of_quadratic_varitaions_for_g(r,X)}
		Suppose that Assumption~\ref{assum:globally_lipschitz_coefficients} holds.
		Let $g:[0,T]\times\R^d\to\R$ be a Borel measurable function such that, for each $r$, the map $x\mapsto g(r,x)$ is of class $C^1$, and there exists a constant $L>0$ with
		\begin{align}\label{eq:estim_of_approximation_of_quadratic_varitaions_for_g(r,X)_2}
			|\nabla g(r,x) - \nabla g(r',x')|
			\leq L(|r-r'| + |x-x'|),\quad
			|\nabla g(r,x)|\leq L .
		\end{align}
		Let $ 0\leq t<s\leq T $ and let $ \Delta: t= r_0<r_1<\cdots < r_n=s $ be a partition of $ [t,s] $. For $ k=1,\dots,d' $, set
		\[
		V_k^{\Delta,t,x}:= \sum_{j=0}^{n-1}\big(g(r_{j+1},X_{r_{j+1}}^{t,x})- g(r_{j},X_{r_{j}}^{t,x})\big)\big(W_{r_{j+1}}^k- W_{r_{j}}^k\big).
		\]
		Then for any $ p\geq 1 $, there exists a constant $ C_p> 0 $ independent of $ \Delta $ such that
		\[
		\E\big[|V_k^{\Delta,t,x}- V_k^{\Delta,t,x'}|^p\big]
		\leq C_p(1+|x| +|x'|)^{p} |x-x'|^p,\quad x,x'\in \R^d.
		\]
	\end{lem}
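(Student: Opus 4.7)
The plan is to bound $V_k^{\Delta,t,x}-V_k^{\Delta,t,x'}=\sum_{j=0}^{n-1}D_j\xi_j$, where $\xi_j:=W^k_{r_{j+1}}-W^k_{r_j}$ and $D_j:=\Phi(r_{j+1})-\Phi(r_j)$ for $\Phi(r):=g(r,X^{t,x}_r)-g(r,X^{t,x'}_r)$, by a mean-value decomposition of $\Phi$ so that only $\nabla g$---and never $\nabla^2 g$---enters the estimates. Setting $Y_r:=X^{t,x}_r-X^{t,x'}_r$ and $Z_r(\theta):=\theta X^{t,x}_r+(1-\theta)X^{t,x'}_r$, the fundamental theorem of calculus gives $\Phi(r)=\int_0^1\nabla g(r,Z_r(\theta))\cdot Y_r\,d\theta$. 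With $\bar G_j:=\int_0^1\nabla g(r_{j+1},Z_{r_{j+1}}(\theta))\,d\theta$ and $\overline{\Delta G}_j:=\int_0^1[\nabla g(r_{j+1},Z_{r_{j+1}}(\theta))-\nabla g(r_j,Z_{r_j}(\theta))]\,d\theta$, the identity $ab-a'b'=a(b-b')+(a-a')b'$ produces $D_j=\bar G_j\cdot(Y_{r_{j+1}}-Y_{r_j})+\overline{\Delta G}_j\cdot Y_{r_j}$, whence $V_k^{\Delta,t,x}-V_k^{\Delta,t,x'}=S_1+S_2$ with $S_1:=\sum_j\bar G_j\cdot(Y_{r_{j+1}}-Y_{r_j})\xi_j$ and $S_2:=\sum_j\overline{\Delta G}_j\cdot Y_{r_j}\,\xi_j$.

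For $S_1$, the bound $|\bar G_j|\le L$ and Cauchy--Schwarz reduce the $L^p$ estimate to control of $(\sum_j|Y_{r_{j+1}}-Y_{r_j}|^2)^{1/2}$ and $(\sum_j\xi_j^2)^{1/2}$ in higher Lebesgue norms. Writing $Y$ as a solution of the SDE with drift $\beta_u:=b(u,X^{t,x}_u)-b(u,X^{t,x'}_u)$ (satisfying $|\beta_u|\le K(u)|Y_u|$) and diffusion $\gamma_u:=\sigma(u,X^{t,x}_u)-\sigma(u,X^{t,x'}_u)$ (satisfying $|\gamma_u|\le K(u)^{1/2}|Y_u|$), the drift piece is controlled by $\sum_j\bigl(\int_{r_j}^{r_{j+1}}|\beta|\,du\bigr)^{\!2}\le\bigl(\int_t^s|\beta|\,du\bigr)^{\!2}\le\|K\|_{L^1}^2\sup_r|Y_r|^2$, while the martingale piece is handled by discrete Burkholder--Davis--Gundy applied to the continuous martingale $r\mapsto\int_t^r\gamma_u\,dW_u$, giving $\E[(\sum_j|\int_{r_j}^{r_{j+1}}\gamma\,dW|^2)^{p/2}]\le C_p\E[(\int_t^s|\gamma|^2\,du)^{p/2}]\le C_p\|K\|_{L^1}^{p/2}\E[\sup_r|Y_r|^p]$. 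Combined with Lemma~\ref{th:SDE_standard_estim} (which yields $\E[\sup_r|Y_r|^q]\le C_q|x-x'|^q$) and the elementary bound $\E[(\sum_j\xi_j^2)^q]\le C_q T^q$, this gives $\|S_1\|_{L^p}\le C_p|x-x'|$. For $S_2$, the Lipschitz continuity of $\nabla g$ from \eqref{eq:estim_of_approximation_of_quadratic_varitaions_for_g(r,X)_2} yields $|\overline{\Delta G}_j|\le L(r_{j+1}-r_j)+L\bigl(|X^{t,x}_{r_{j+1}}-X^{t,x}_{r_j}|+|X^{t,x'}_{r_{j+1}}-X^{t,x'}_{r_j}|\bigr)$; a H\"older argument analogous to the above, now using the linear-growth bounds on $b,\sigma$ to estimate $\sum_j|X^{t,\bullet}_{r_{j+1}}-X^{t,\bullet}_{r_j}|^2$ via discrete BDG (which produces the factor $1+|x|+|x'|$), delivers $\|S_2\|_{L^p}\le C_p(1+|x|+|x'|)|x-x'|$. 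Summing the two estimates and raising to the $p$-th power completes the proof.

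The main obstacle is to keep the constants uniform in the partition $\Delta$. A direct application of It\^o's formula to $\Phi$ would force a $\nabla^2 g$-term into the estimates, but under the present hypotheses $\nabla g$ is merely Lipschitz, so $\nabla^2 g$ is only an a.e.\ $L^\infty$-object whose pointwise differences cannot be controlled uniformly as the partition is refined. The mean-value decomposition above sidesteps this by channelling every derivative of $g$ beyond $\nabla g$ through increments of $Y$ and $X$, both of which admit uniform $L^p$ estimates via Lemma~\ref{th:SDE_standard_estim} and discrete martingale inequalities.
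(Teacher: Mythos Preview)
Your proposal is correct and follows essentially the same route as the paper's proof: both use the fundamental theorem of calculus to write $\Phi(r)=\int_0^1\nabla g(r,Z_r(\theta))\cdot Y_r\,d\theta$, split the increment $D_j$ via $ab-a'b'$ into a term with $|\nabla g|\le L$ times the increment of $Y$ and a term with the Lipschitz increment of $\nabla g$ times $Y$, and then bound the resulting square sums by discrete BDG combined with Lemma~\ref{th:SDE_standard_estim}. The only cosmetic differences are that the paper applies Cauchy--Schwarz on the full sum $\sum_j D_j\xi_j$ first and then decomposes $|D_j|\le A_{1,j}+A_{2,j}$, whereas you decompose $D_j$ first and treat $S_1,S_2$ separately; and the paper uses the variant $(a-a')b+a'(b-b')$ so that $|Y_{r_{j+1}}|$ (rather than your $|Y_{r_j}|$) appears in the $A_{1,j}$-term.
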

	
	\begin{proof}
		See Appendix~\ref{section:proof_of_estim_of_approximation_of_quadratic_varitaions_for_g(r,X)}.
	\end{proof}
	
	Next, we recall the flow associated with the SDE. According to Theorem~4.5.1 of \cite{Kunita1990}, the family $(X_s^{t,x})_{0\le t\le s\le T,\;x\in\R^d}$ admits a modification which defines a stochastic flow of homeomorphisms. 
	We continue to denote this modification by the same notation $(X_s^{t,x})_{0\le t\le s\le T,\;x\in\R^d}$. In particular, the following properties hold with probability one:
	\begin{enumerate}
		\item the map $(t,s,x)\mapsto X_s^{t,x}$ is continuous;
		\item for any $0\le t\le s\le T$, the map $x\mapsto X_s^{t,x}$ is a homeomorphism of $\R^d$;
		\item for any $0\le t\le r\le s\le T$ and $x\in\R^d$, we have the flow property
		\[
		X_s^{r,y}\big|_{y=X_r^{t,x}}=X_s^{t,x}.
		\]
	\end{enumerate}
	For each $t,s$, we denote the inverse of the map $x\mapsto X_s^{t,x}$ as $x\mapsto \widehat{X}_s^{t,x}$, and refer to it as the inverse flow. 
	For each $x\in\R^d$, the random variable $\widehat{X}_s^{t,x}$ is measurable with respect to the $\sigma$-algebra generated by $\{W_s-W_r\mid r\in[t,s]\}$ together with all $\Prob $-null sets. Moreover, by the flow property, for every $r\in[t,s]$ we have
	\[
	X_r^{t,y}\big|_{y=\widehat{X}_s^{t,x}}=\widehat{X}_s^{r,x}
	\]
	and from (1), (2) it follows that $(r,x)\mapsto \widehat{X}_s^{r,x}$ is almost surely continuous. In what follows we also use the shorthand $X_s^{t}(x):=X_s^{t,x}$. 
	
	With these preliminaries in place, we proceed to show that the inverse flow satisfies an It\^o equation involving the backward It\^o integral. To this end, we prove the following lemma, an analogue of \cite[Chapter~II, Lemma~6.2]{Kunita1984}.
	
	\begin{lem}\label{th:inverse_flow_stochastic_integral}
		Let Assumption~\ref{assum:globally_lipschitz_coefficients} hold. Let $g:[0,T]\times\R^d\to\R^{d\times d'}$ be Borel measurable and satisfy
		\begin{align}\label{eq:inverse_flow_stochastic_integral_3}
			|g(r,x)-g(r,x')|\le K(r)^{1/2}|x-x'|,\quad
			|g(r,x)|\le K(r)^{1/2}(1+|x|),
		\end{align}
		and suppose there exists a Borel measurable function $\widehat{g}:[0,T]\times\R^d\to\R^d$ such that, for each $r$, the map $x\mapsto \widehat{g}(r,x)$ is continuous and
		\begin{align}\label{eq:inverse_flow_stochastic_integral_1}
			\widehat{g}_i(r,x)
			= \sum_{\substack{1\le j\le d\\ 1\le k\le d'}} \sigma_{jk}(r,x)\,\partial_j g_{ik}(r,x),
			\quad \text{for a.e.}\;x,\;\; i=1,\dots,d .
		\end{align}
		Then, for fixed $0\le t\le s\le T$, the family $\bigl(\int_t^s g(r,X_r^{t,x})\,dW_r\bigr)_{x\in\R^d}$ admits a modification that is continuous in $x$, which we denote by $\bigl(\overline{\int_t^{s}g(r,X_r^{t,x})\,dW_r}\bigr)_{x\in\R^d}$. Moreover, for every $y\in\R^d$, almost surely
		\begin{align}\label{eq:inverse_flow_stochastic_integral_2}
			\overline{\int_{t}^{s}g(r,X_r^{t,x})\,dW_r}\bigg|_{x=\widehat{X}_s^{t,y}}
			= \int_{t}^{s} g(r,\widehat{X}_s^{r,y})\,\widehat{d}W_r
			- \int_{t}^{s}\widehat{g}(r,\widehat{X}_s^{r,y})\,dr .
		\end{align}
	\end{lem}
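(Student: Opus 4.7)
The plan is to establish the two claims of the lemma separately: existence of a continuous in $x$ modification of $\int_t^s g(r, X_r^{t,x})\,dW_r$, and the inverse-flow identity \eqref{eq:inverse_flow_stochastic_integral_2}. The second part will be treated by space-time mollification of $g$, using the smooth-coefficient analogue of Kunita's Chapter~II, Lemma~6.2 from \cite{Kunita1984} and then passing to the limit via Lemma~\ref{th:approximation_molifier_1}.

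For the continuous modification, I would combine the Burkholder--Davis--Gundy inequality with the Lipschitz condition on $g$ and the flow estimates of Lemma~\ref{th:SDE_standard_estim}. For any $p > d$,
\[
\E\Big|\int_t^s g(r, X_r^{t,x})\,dW_r - \int_t^s g(r, X_r^{t,x'})\,dW_r\Big|^p
\leq C_p\,\E\Big(\int_t^s K(r)|X_r^{t,x}-X_r^{t,x'}|^2\,dr\Big)^{p/2}
\leq C\|K\|_{L^1}^{p/2}|x-x'|^p,
\]
so Kolmogorov's continuity criterion yields the desired continuous version $\overline{\int_t^s g(r,X_r^{t,x})\,dW_r}$.

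For the identity, I would mollify $g$ in space-time to obtain a smooth sequence $g_n$, and set $\widehat{g}_n(r,x) := \sum_{j,k} \sigma_{jk}(r,x)\,\partial_j g_{n,ik}(r,x)$, noting that $\partial_j g_{n,ik}$ coincides with the mollification of the a.e.-defined $\partial_j g_{ik}$. For smooth $g_n$ the analogue of Kunita's Chapter~II, Lemma~6.2 applies directly and gives
\[
\overline{\int_t^s g_n(r,X_r^{t,x})\,dW_r}\Big|_{x=\widehat{X}_s^{t,y}}
= \int_t^s g_n(r,\widehat{X}_s^{r,y})\,\widehat{d}W_r - \int_t^s \widehat{g}_n(r,\widehat{X}_s^{r,y})\,dr.
\]
Passing $n\to\infty$ requires three convergences. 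On the left, I would apply the BDG--Kolmogorov bound above to $g_n - g$, with constants uniform in $n$ via the mollified Lipschitz bound $|g_n(r,x)-g_n(r,x')|^2 \leq (K \ast \rho_n^1)(r)|x-x'|^2$; combined with pointwise $L^2$-convergence at one reference point (from Lemma~\ref{th:approximation_molifier_1}(1) and the moment estimates on $X_r^{t,x}$), this yields uniform convergence on compact $x$-sets, through which the substitution $x = \widehat{X}_s^{t,y}$ passes. For the backward It\^o integral I would use its It\^o isometry and Lemma~\ref{th:approximation_molifier_1}(1) applied along $\widehat{X}_s^{r,y}$. For the Lebesgue correction I would apply Lemma~\ref{th:approximation_molifier_1}(2) componentwise with $f = \sigma$ and $g = \nabla g^{(i)} := (\partial_j g_{ik})_{j,k}$, after replacing $K$ by $K+1 \in L^1([0,T])$ so that both $|f| \leq (K+1)(1+|x|)$ and $|\nabla g^{(i)}| \leq (K+1)^{1/2}$ hold; this gives $\int_0^T \sup_{|x|\leq R} |\widehat{g}_n(r,x) - \widehat{g}(r,x)|\,dr \to 0$.

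The main obstacle is coordinating these three limits consistently. The substitution on the LHS demands \emph{uniform} convergence in $x$ on compact sets, which is not automatic from pointwise $L^2$-convergence and crucially relies on the Kolmogorov-type estimate being uniform in $n$. In parallel, the RHS needs moment bounds $\E\sup_{r\in[t,s]}|\widehat{X}_s^{r,y}|^p < \infty$ to localize the integrands onto balls of radius $R$; these follow from Kunita's theory of stochastic flows of homeomorphisms and should be cited explicitly. Once all three convergences are in hand (in probability, possibly along a subsequence), equating the limits in the smooth identity yields \eqref{eq:inverse_flow_stochastic_integral_2}.
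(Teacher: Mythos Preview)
Your overall strategy---BDG plus Kolmogorov for the continuous modification, then space-time mollification of $g$ and limit passage via Lemma~\ref{th:approximation_molifier_1}---matches the paper's. The gap is in the smooth case. You write that for smooth $g_n$ ``the analogue of Kunita's Chapter~II, Lemma~6.2 applies directly,'' but Kunita's framework assumes the \emph{SDE coefficients} $b,\sigma$ (not just the integrand $g$) are continuous in time, whereas here only Assumption~\ref{assum:globally_lipschitz_coefficients} holds, with Lipschitz constants merely in $L^1([0,T])$. Mollifying $g$ does not mollify the flow $X^{t,x}$. The paper therefore re-proves this step from scratch (its Step~1): it approximates $\int_t^s g_n(r,X_r^{t,x})\,dW_r$ by left Riemann sums $I^{x,\Delta}$, rewrites them as right Riemann sums (which converge to the backward integral via Remark~\ref{rem:backward_Ito_integral}) minus a quadratic-covariation correction $I_2^{x,\Delta}$. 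The delicate point is that the random substitution $x=\widehat{X}_s^{t,y}$ must commute with the limit $|\Delta|\to0$; this requires a Kolmogorov-type moment bound on $I_2^{x,\Delta}$ that is \emph{uniform in the partition} (Lemma~\ref{th:estim_of_approximation_of_quadratic_varitaions_for_g(r,X)}), combined with the random-field convergence criterion Lemma~\ref{th:random_field_and_convergence_in_probability}. This is precisely where the $C^{1,2}$ regularity of $g_n$ with Lipschitz gradient is exploited, and it is not a black-box citation.

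A minor point on the RHS limits: you do not need $\E\sup_{r\in[t,s]}|\widehat{X}_s^{r,y}|^p<\infty$. Since $r\mapsto\widehat{X}_s^{r,y}$ is continuous (from the flow property), it is almost surely bounded, which suffices to localize to $\{|x|\le R\}$ and apply Lemma~\ref{th:approximation_molifier_1}; this is what the paper does. Asking for moment bounds on the inverse flow would be close to circular, since those are most naturally derived from the inverse-flow SDE of Proposition~\ref{th:inverse_flow_SDE}, which the present lemma is the key input for.
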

	
	\begin{proof}
		\textbf{Step 0.}
		We start by showing that the stochastic integral admits a continuous modification. By the Burkholder--Davis--Gundy (BDG) inequality, for any $ p\geq 1 $ there exists a constant $ c_p>0 $ such that
		\begin{align*}
			&\E\bigg[\bigg|\int_{t}^{s}g(r,X_r^{t,x})\,dW_r- \int_{t}^{s}g(r,X_r^{t,x'})\,dW_r\bigg|^p\bigg]\\
			&\leq c_p \E\bigg[\bigg(\int_{t}^{s}|g(r,X_r^{t,x})-g(r,X_r^{t,x'})|^2\,dr\bigg)^{p/2}\bigg]
			\leq c_p\|K\|_{L^1}^{p/2}\E\bigg[\sup_{r\in[t,T]}|X_{r}^{t,x}-X_{r}^{t,x'}|^p\bigg].
		\end{align*}
		Hence, by Lemma~\ref{th:SDE_standard_estim} together with Kolmogorov's continuity criterion, $ \bigl(\int_{t}^{s}g(r,X_r^{t,x})\,dW_r\bigr)_{x\in\R^d} $ admits a continuous modification $ \big(\overline{\int_{t}^{s}g(r,X_r^{t,x})\,dW_r}\big)_{x\in\R^d} $.
		
		\textbf{Step 1.}
		We first prove the claim in the case where $ g\in C^{1,2}([0,T]\times \R^d;\R^{d\times d'}) $ and there exists a constant $ L>0 $ such that
		\[
		| g(r,x) -  g(r,x')|
		\leq L|x-x'|,\quad
		|\nabla g(r,x) - \nabla g(r',x')|
		\leq L(|r-r'| + |x-x'|),
		\]
		in which case $ |\nabla g(r,x)|\leq \sqrt{d}L $ and \eqref{eq:inverse_flow_stochastic_integral_1} holds for every $ x\in \R^d $. Let $ t<s $ and let $ \Delta: t= r_0<r_1<\cdots < r_n=s $ be a partition of $ [t,s] $, and set
		\[
		I^{x,\Delta} :=\sum_{j=0}^{n-1} g(r_j, X_{r_j}^{t,x})(W_{r_{j+1}}- W_{r_{j}}).
		\]
		Then, by the continuity of $r\mapsto g(r,X_r^{t,x})$, for each $ x $ we have $ I^{x,\Delta}\to \int_{t}^{s}g(r,X_r^{t,x})\,dW_r $ in probability as $ |\Delta|\to 0 $. Moreover, the BDG inequality and the Lipschitz condition on $ g $ yield that for any $ p\geq 1 $,
		\[
		\E[|I^{x,\Delta}-I^{x',\Delta}|]
		\leq c_p(L^2T)^{p/2}\E\bigg[\sup_{r\in[t,T]}|X_{r}^{t,x}-X_{r}^{t,x'}|^p\bigg],
		\]
		where $ c_p $ is the same constant as in Step 0. Therefore, by Lemma~\ref{th:SDE_standard_estim} and Lemma~\ref{th:random_field_and_convergence_in_probability}, we obtain the convergence in probability
		\[
		\overline{\int_{t}^{s}g(r,X_r^{t,x})\,dW_r}  \bigg|_{x=\widehat{X}_s^{t,y}}
		= \lim_{|\Delta|\to 0 } \bigg( I^{x,\Delta}\Big|_{x=\widehat{X}_s^{t,y}} \bigg).
		\]
		We compute the right-hand side. Observe that
		\begin{align*}
			I^{x,\Delta}
			&= \sum_{j=0}^{n-1}g(r_{j+1}, X_{r_{j+1}}^{t,x})(W_{r_{j+1}}- W_{r_{j}})
			- \sum_{j=0}^{n-1}(g(r_{j+1}, X_{r_{j+1}}^{t,x})-g(r_j, X_{r_j}^{t,x}))(W_{r_{j+1}}- W_{r_j})\\
			&=: I_1^{x,\Delta} - I_2^{x,\Delta}.
		\end{align*}
		First, by the flow property $X_r^{t}(\widehat{X}_s^{t,y})=\widehat{X}_s^{r,y}$ and   Remark~\ref{rem:backward_Ito_integral},
		\[
		I_1^{x,\Delta}\Big|_{x=\widehat{X}_s^{t,y}}
		= \sum_{j=0}^{n-1}g(r_{j+1}, \widehat{X}_s^{r_{j+1},y})(W_{r_{j+1}}- W_{r_j})
		\to \int_t^s g(r,\widehat{X}_s^{r,y})\,\widehat{d}W_r
		\]
		in probability as $|\Delta|\to0$.
		Next, write the $ i $-th component of $ I_2^{x,\Delta} $ as $ I_{2,i}^{x,\Delta} $. Applying It\^o's formula to $(g_{ik}(r,X_r^{t,x}))_{r\in[t,s]}$ and using the property of quadratic covariation, we obtain
		\begin{align*}
			I_{2,i}^{x,\Delta}
			&= \sum_{k=1}^{d'}\sum_{j=0}^{n-1}(g_{ik}(r_{j+1}, X_{r_{j+1}}^{t,x})-g_{ik}(r_j, X_{r_j}^{t,x}))(W_{r_{j+1}}^k- W_{r_j}^k)\\
			&\to \sum_{k=1}^{d'} \Big\langle  g_{ik}(\cdot,X_\cdot^{t,x}),W^k \Big\rangle_s
			=\sum_{k=1}^{d'}\sum_{l=1}^{d}\int_t^s  \partial_{l} g_{ik}(r,X_r^{t,x})\sigma_{lk}(r,X_r^{t,x})\,dr
			= \int_{t}^{s}\widehat{g}_i(r,X_r^{t,x})\, dr
		\end{align*}
		in probability as $ |\Delta|\to 0 $. Therefore, by Lemma~\ref{th:estim_of_approximation_of_quadratic_varitaions_for_g(r,X)} and Lemma~\ref{th:random_field_and_convergence_in_probability} we get the convergence in probability
		\[
		\lim_{|\Delta|\to 0 } \bigg( I_{2}^{x,\Delta}\Big|_{x=\widehat{X}_s^{t,y}} \bigg)
		= \int_{t}^{s}\widehat{g}(r,X_r^{t,x})\, dr\bigg|_{x=\widehat{X}_s^{t,y}}
		= \int_{t}^{s}\widehat{g}(r,\widehat{X}_s^{r,y})\, dr,
		\]
		where we used again the flow property. Thus \eqref{eq:inverse_flow_stochastic_integral_2} holds in this case.
		
		\textbf{Step 2.}
		For the general case, let $ (\rho_n)_{n\geq 1} $ and $ (\rho_n^1)_{n\geq 1} $ be sequences of mollifiers on $ \R^d $ and on $ \R $, respectively. Define $g^n$ and $\widehat{g}^n$ by
		\begin{align*}
			&g^{n}(r,x) := \int_{\R}\int_{\R^d}g(r-u,x-y)\mathbf{1}_{[0,T]}(r-u)\rho_n(y)\rho^1_n(u)\,dy\,du,\\
			&\widehat{g}_i^{n}(r,x) := \sum_{j,k}\sigma_{jk}(r,x)\partial_jg^n_{ik}(r,x),\quad i=1,\dots,d .
		\end{align*}
		Then, it can be verified from \eqref{eq:inverse_flow_stochastic_integral_3} that
		\begin{gather}
			|g^{n}(r,x)- g^{n}(r,x')|
			\leq K_n(r)^{1/2}|x-x'|,\label{eq:inverse_flow_stochastic_integral_4}\\
			|\nabla g^n(r,x)-\nabla g^n(r',x')|
			\leq \|K_n\|_{\infty}^{1/2}\|\nabla\rho_n\|_{L^1} |x-x'|
			+ \sqrt{Td}\|K\|_{L^1}^{1/2}\|\rho^1_n\|_{\mathrm{Lip}}|r-r'| \notag
		\end{gather}
		where  $ K_n(r):= \int_{\R}K(r-u)\mathbf{1}_{[0,T]}(r-u)\rho_n^1(u)\,du $ and $ \|\rho^1_n\|_{\mathrm{Lip}} $ denotes the Lipschitz constant of $ \rho_n^1 $. Therefore, by Step~1 we have
		\begin{align}\label{eq:inverse_flow_stochastic_integral_5}
			\overline{\int_{t}^{s}g^n(r,X_r^{t,x})\,dW_r} \bigg|_{x=\widehat{X}_s^{t,y}}
			= \int_{t}^{s}g^n(r,\widehat{X}_s^{r,y})\, \widehat{d}W_r - \int_{t}^{s}\widehat{g}^n(r,\widehat{X}_s^{r,y})\, dr.
		\end{align}
		We pass to the limit in this equality. By the dominated convergence theorem,
		\[
		\partial_jg^{n}_{ik}(r,x) = \int_{\R}\int_{\R^d}\partial_jg_{ik}(r-u,x-y)\mathbf{1}_{[0,T]}(r-u)\rho_n(y)\rho^1_n(u)\,dy\,du.
		\]
		Hence, we can apply Lemma~\ref{th:approximation_molifier_1} (after redefining $\partial_j g_{ik}(r,\cdot)$ on a Lebesgue null set for each fixed $r\in[0,T]$, if necessary) and obtain that, for any $ R>0 $,
		\[
		\lim_{n\to \infty}\bigg(\int_0^T \sup_{|x|\leq R}|g^n(r,x)-g(r,x)|^2 \,dr + \int_0^T \sup_{|x|\leq R}|\widehat{g}^n(r,x)-\widehat{g}(r,x)| \,dr\bigg)
		=0.
		\]
		Since $r\mapsto \widehat{X}_s^{r,y}$ is continuous and hence bounded almost surely, we then have
		\[
		\lim_{n\to \infty}\bigg(\int_t^T |g^n(r,\widehat{X}_s^{r,y})-g(r,\widehat{X}_s^{r,y})|^2 \,dr + \int_t^T|\widehat{g}^n(r,\widehat{X}_s^{r,y})-\widehat{g}(r,\widehat{X}_s^{r,y})| \,dr\bigg)
		=0 \quad \text{a.s.}
		\]
		Consequently,
		for the right-hand side of \eqref{eq:inverse_flow_stochastic_integral_5},
		\[
		\int_{t}^{s}g^n(r,\widehat{X}_s^{r,y})\, \widehat{d}W_r - \int_{t}^{s}\widehat{g}^n(r,\widehat{X}_s^{r,y})\, dr
		\to
		\int_{t}^{s}g(r,\widehat{X}_s^{r,y})\, \widehat{d}W_r - \int_{t}^{s}\widehat{g}(r,\widehat{X}_s^{r,y})\, dr
		\]
		in probability as $n\to\infty$. Similarly, 
		\[
		\int_{t}^{s}g^n(r,X_r^{t,x})\,dW_r \to \int_{t}^{s}g(r,X_r^{t,x})\,dW_r
		\]
		in probability as well.
		Then, in view of \eqref{eq:inverse_flow_stochastic_integral_4} and $\|K_n\|_{L^1}\le \|K\|_{L^1}$, the estimate from Step~0 together with Lemma~\ref{th:random_field_and_convergence_in_probability} implies that, in probability,
		\[
		\lim_{n\to \infty}\bigg(\overline{\int_{t}^{s}g^n(r,X_r^{t,x})\,dW_r}\bigg|_{x=\widehat{X}_s^{t,y}}\bigg)
		=\overline{\int_{t}^{s}g(r,X_r^{t,x})\,dW_r}\bigg|_{x=\widehat{X}_s^{t,y}} .
		\]
		Combining this with the convergence on the right-hand side of \eqref{eq:inverse_flow_stochastic_integral_5}, we may pass to the limit in \eqref{eq:inverse_flow_stochastic_integral_5}, and the desired conclusion follows.
	\end{proof}
	
	As announced before Lemma~\ref{th:inverse_flow_stochastic_integral}, we now obtain the It\^o equation for the inverse flow with the backward It\^o integral. This result is analogous to the last assertion of \cite[Chapter~II, Theorem~6.1]{Kunita1984} (see Remark~\ref{rem:inverse_flow_SDE} below).
	
	\begin{prop}\label{th:inverse_flow_SDE}
		Let Assumption \ref{assum:globally_lipschitz_coefficients} hold, and assume all conditions in \ref{item:asummption_of_main_theorem_1} of Section~\ref{section:Main_theorem}, except that the local Lipschitz continuity of $x\mapsto \widehat{\sigma}(r,x)$  is replaced by continuity. Then, for each $ (s,y)\in[0,T]\times\R^d $, one has
		\begin{align}\label{eq:inverse_flow_SDE}
			\widehat{X}_s^{t,y}
			= y - \int_t^s \big(b(r,\widehat{X}_s^{r,y})-\widehat{\sigma}(r,\widehat{X}_s^{r,y})\big)\,dr
			- \int_t^s \sigma(r,\widehat{X}_s^{r,y})\,\widehat{d}W_r,\quad t\in[0,s].
		\end{align}
	\end{prop}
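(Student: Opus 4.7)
The strategy is to evaluate the forward SDE \eqref{eq:SDE} at the random initial point $x=\widehat{X}_s^{t,y}$, use the defining identity $X_s^{t,x}|_{x=\widehat{X}_s^{t,y}}=y$ together with the flow relation $X_r^{t,\widehat{X}_s^{t,y}}=\widehat{X}_s^{r,y}$, and then invoke Lemma~\ref{th:inverse_flow_stochastic_integral} to rewrite the stochastic integral term in its backward It\^o form with the correction coming from $\widehat{\sigma}$.

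First, I would argue that each term in \eqref{eq:SDE}, viewed as a random field in $x$, admits a modification that is continuous in $x$. For the drift term $x\mapsto\int_t^s b(r,X_r^{t,x})\,dr$, the global Lipschitz bound on $b$ combined with Lemma~\ref{th:SDE_standard_estim} yields Kolmogorov moment estimates and hence a continuous modification, which I denote $\overline{\int_t^s b(r,X_r^{t,x})\,dr}$; a continuous modification $\overline{\int_t^s \sigma(r,X_r^{t,x})\,dW_r}$ of the stochastic integral is already provided by Step~0 of Lemma~\ref{th:inverse_flow_stochastic_integral}. Since two random fields that are $x$-continuous a.s.\ and agree at each rational $x$ a.s.\ must coincide everywhere a.s., \eqref{eq:SDE} holds simultaneously for all $x\in\R^d$ on a single full-measure event. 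Substituting the random $x=\widehat{X}_s^{t,y}$ and using $X_s^{t,\widehat{X}_s^{t,y}}=y$ then gives, almost surely,
\[
y = \widehat{X}_s^{t,y} + \overline{\int_t^s b(r,X_r^{t,x})\,dr}\bigg|_{x=\widehat{X}_s^{t,y}} + \overline{\int_t^s \sigma(r,X_r^{t,x})\,dW_r}\bigg|_{x=\widehat{X}_s^{t,y}}.
\]

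Next, I would identify the two integrals after substitution. For the drift, the joint almost-sure continuity of $(r,x)\mapsto X_r^{t,x}$ together with the Lipschitz continuity of $b(r,\cdot)$ and dominated convergence shows that the map $x\mapsto\int_t^s b(r,X_r^{t,x})\,dr$ is itself a.s.\ continuous in $x$, and hence coincides $\omega$-pointwise with its continuous modification; the flow property then gives
\[
\overline{\int_t^s b(r,X_r^{t,x})\,dr}\bigg|_{x=\widehat{X}_s^{t,y}} = \int_t^s b(r,\widehat{X}_s^{r,y})\,dr \quad \text{a.s.}
\]
For the stochastic integral I apply Lemma~\ref{th:inverse_flow_stochastic_integral} with $g=\sigma$ and $\widehat{g}=\widehat{\sigma}$: Assumption~\ref{assum:globally_lipschitz_coefficients} supplies \eqref{eq:inverse_flow_stochastic_integral_3}, the identity \eqref{eq:asummption_of_main_theorem_1} is exactly \eqref{eq:inverse_flow_stochastic_integral_1}, and the hypothesis of the proposition requires only continuity of $\widehat{\sigma}(r,\cdot)$, which is precisely the regularity needed. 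The lemma then yields
\[
\overline{\int_t^s \sigma(r,X_r^{t,x})\,dW_r}\bigg|_{x=\widehat{X}_s^{t,y}} = \int_t^s \sigma(r,\widehat{X}_s^{r,y})\,\widehat{d}W_r - \int_t^s \widehat{\sigma}(r,\widehat{X}_s^{r,y})\,dr.
\]
Inserting both identities and rearranging produces \eqref{eq:inverse_flow_SDE}.

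The main obstacle is the substitution identity for the stochastic integral at the random point $x=\widehat{X}_s^{t,y}$: because $x\mapsto\int_t^s\sigma(r,X_r^{t,x})\,dW_r$ is not defined pathwise as a Lebesgue integral, one cannot simply invoke $x$-continuity and substitute. All of that work has, however, already been performed in Lemma~\ref{th:inverse_flow_stochastic_integral}, which combines Riemann-sum approximation, It\^o's formula to extract the quadratic-covariation correction, and the mollifier stability of Lemma~\ref{th:approximation_molifier_1} to pass from the smooth to the low-regularity case. In the present proposition, that lemma can therefore be used as a black box, and the only remaining task is the comparatively mild continuity argument for the drift integral described above.
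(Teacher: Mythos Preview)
Your proof is correct and follows essentially the same route as the paper: substitute $x=\widehat{X}_s^{t,y}$ into the continuous-in-$x$ version of the forward SDE, handle the drift via the flow property, and invoke Lemma~\ref{th:inverse_flow_stochastic_integral} with $g=\sigma$, $\widehat{g}=\widehat{\sigma}$ for the stochastic integral. Your treatment of the drift term (making explicit the a.s.\ continuity of $x\mapsto\int_t^s b(r,X_r^{t,x})\,dr$ via dominated convergence) is slightly more detailed than the paper's, which simply writes the substitution and appeals to the flow property, but the underlying argument is the same.
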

	
	\begin{proof}
		Fix $ t\le s $, and let $ \big(\overline{\int_{t}^{s}\sigma(r,X_r^{t,x})\,dW_r}\big)_{x\in\R^d} $ denote a continuous modification of $\bigl(\int_{t}^{s}\sigma(r,X_r^{t,x})\,dW_r\bigr)_{x\in\R^d}$ that is continuous in $x$ (see Lemma~\ref{th:inverse_flow_stochastic_integral}). 
		Taking continuous modifications in $ x $ on both sides of SDE \eqref{eq:SDE} and substituting $ x=\widehat{X}_s^{t,y} $ give
		\[
		y
		= \widehat{X}_s^{t,y}
		+ \int_t^s b(r,X_r^{t,x})\,dr\bigg|_{x=\widehat{X}_s^{t,y}}
		+ \overline{\int_t^s \sigma(r,X_r^{t,x})\,dW_r}\bigg|_{x=\widehat{X}_s^{t,y}}.
		\]
		Therefore, the conclusion follows by Lemma~\ref{th:inverse_flow_stochastic_integral} and the flow property.
	\end{proof}
	
	\begin{rem}\label{rem:inverse_flow_SDE}
		\cite[Chapter~II, Theorem~6.1]{Kunita1984} is stated under the assumption of \emph{local} Lipschitz continuity of the coefficients, together with time continuity and additional smoothness.
		Its last assertion states that, if the solution maps form a stochastic flow of homeomorphisms, then the inverse flow satisfies the Itô equation \eqref{eq:inverse_flow_SDE} with the backward Itô integral.
		
		In contrast, we impose \emph{global} Lipschitz continuity in $ x $ with an $ L^1 $-in-time modulus (hence the flow-of-homeomorphisms property is already established), and we relax both the time continuity and smoothness assumptions.
		Since spatial derivatives are available only almost everywhere, we work with the representative $ \widehat{\sigma} $ for the correction term (which is additionally assumed to be continuous in space), and the weaker time regularity is handled alongside by Lemma~\ref{th:approximation_molifier_1}.
	\end{rem}

	\section{Weak Differentiability of Solutions to SDEs}\label{section:weak_diffrentability_of_sol_of_SDE}
	This section discusses the weak differentiability of solutions to SDEs and derives estimates for their weak derivatives. 
	Together with the result in the previous section, these form one of the key tools in the proof of the main theorem.
	
	Throughout this section, we fix $ t\in[0,T] $ and let $ \mathscr P $ denote the progressive $ \sigma $-field on $ \Omega\times [t,T] $. The terms weak derivative and weakly differentiable are understood in the Sobolev (distributional) sense.

	We begin by discussing joint measurability of the weak derivative.
	
	\begin{lem}
		Let $ (X_s^x)_{s\in[t,T],\,x\in \R^d} $ be an $ \R^d $-valued $ \mathscr P\otimes \mathscr B(\R^d) $-measurable process, and suppose that for every $ s\in[t,T] $, the map $ x\mapsto X_s^x $ is weakly differentiable almost surely.
		Then, there exists an $ \R^{d\times d} $-valued $ \mathscr P\otimes \mathscr B(\R^d) $-measurable process $ (\widetilde{\nabla X}{}_s^x)_{s\in[t,T],\,x\in \R^d} $ such that
		\begin{align}\label{eq:jointly_mble_version_of_weak_derivative_process}
			\widetilde{\nabla X}{}_s^x 
			= \nabla X_s^x,\quad \text{a.e.}\;x,\;\;\text{a.s.}, \;\;\text{for all }s\in[t,T]
		\end{align}
		where the right-hand side denotes the weak derivative of $ x\mapsto X_s^x $.
	\end{lem}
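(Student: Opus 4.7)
My plan is to obtain the desired $\mathscr{P}\otimes\mathscr{B}(\R^d)$-measurable representative through spatial mollification followed by a componentwise $\limsup$. Fix a sequence $(\rho_n)_{n\geq 1}\subset C_c^\infty(\R^d)$ of mollifiers and set
\[
X^{(n)}_s(x) := \int_{\R^d} X_s^{x-y}\,\rho_n(y)\,dy,
\]
with the convention that $X^{(n)}_s(x) := 0$ whenever the integral fails to converge absolutely. Since $(\omega,s,x)\mapsto X^x_s$ is $\mathscr{P}\otimes\mathscr{B}(\R^d)$-measurable and $\rho_n$ is compactly supported, Fubini's theorem gives that $X^{(n)}$ is again $\mathscr{P}\otimes\mathscr{B}(\R^d)$-measurable. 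Because weak differentiability of $x\mapsto X^x_s$ implies $X^{\cdot}_s\in L^1_{\mathrm{loc}}(\R^d)$, for each $s$ the integral is finite for every $x$ on an event of full probability, on which $X^{(n)}_s\in C^\infty(\R^d)$.

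Next, I would express the spatial partial derivatives of $X^{(n)}_s$ as limits of difference quotients
\[
D^{n,k}_i(s,x) := k\bigl[X^{(n)}_s(x+e_i/k) - X^{(n)}_s(x)\bigr],\quad k\in\mathbb{N},\;\; i=1,\dots,d,
\]
which remain $\mathscr{P}\otimes\mathscr{B}(\R^d)$-measurable. Define $\widetilde{\nabla X}{}_s^x$ as the $\R^{d\times d}$-valued field whose $(i,j)$-entry is $\limsup_{n\to\infty}\limsup_{k\to\infty}(D^{n,k}_i(s,x))_j$ when this double limit superior is finite, and $0$ otherwise. As a $\limsup$ of jointly measurable objects, $\widetilde{\nabla X}{}_s^x$ is $\mathscr{P}\otimes\mathscr{B}(\R^d)$-measurable. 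To verify \eqref{eq:jointly_mble_version_of_weak_derivative_process}, fix $s$ and restrict to the almost-sure event where $x\mapsto X^x_s$ is weakly differentiable; on that event $\nabla X^{\cdot}_s\in L^1_{\mathrm{loc}}(\R^d)$, and the standard identity $\partial_i X^{(n)}_s(x) = (\partial_i X_s\ast\rho_n)(x)$ together with the Lebesgue differentiation theorem implies $\partial_i X^{(n)}_s(x)\to\partial_i X^x_s$ for a.e.~$x$. Hence $\widetilde{\nabla X}{}_s^x=\nabla X^x_s$ for a.e.~$x$, as required.

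The only delicate point is securing joint measurability of $\partial_i X^{(n)}_s$ in the presence of the $\omega$-dependent exceptional null set on which $X^{\cdot}_s$ may fail to be locally integrable and the classical mollification is not well-defined. The device of representing $\partial_i X^{(n)}_s(x)$ as a pointwise limit of jointly measurable difference quotients, combined with the $0$-convention on the exceptional set, circumvents this issue: measurability is preserved globally, while the almost-sure identification with the weak gradient is unaffected because the exceptional $\omega$-set has null probability for each fixed~$s$.
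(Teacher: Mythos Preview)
Your argument is correct and takes a more direct route than the paper. Both approaches mollify in the spatial variable, but the paper packages the passage to the limit through an abstract lemma (Lemma~\ref{th:jointly_mble_vesion_of_weak_derivative}, resting on Lemma~\ref{th:measurability_convergence_in_measure}): given that $\nabla(X_s\ast\rho_n)\to\nabla X_s$ in $L^1_{\mathrm{loc}}$ for each parameter, one extracts a measurably-parametrised subsequence along which convergence is almost everywhere, and this yields the jointly measurable limit. You instead use the stronger analytic fact that, for a standard mollifier sequence, $(\nabla X_s)\ast\rho_n\to\nabla X_s$ pointwise a.e.\ (not just in $L^1_{\mathrm{loc}}$); this lets you define $\widetilde{\nabla X}$ directly as an iterated $\limsup$ of jointly measurable difference quotients, so measurability is automatic with no subsequence bookkeeping. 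Your route is more elementary and self-contained for this particular statement, and it also sidesteps the verification that the set of $(\omega,s)$ where $x\mapsto X_s^x$ is weakly differentiable is $\mathscr P$-measurable. The paper's route, in exchange, isolates a reusable tool (Lemma~\ref{th:measurability_convergence_in_measure}) that applies whenever only convergence in measure is available. One small point worth making explicit: the a.e.\ convergence $(\nabla X_s)\ast\rho_n\to\nabla X_s$ for $\nabla X_s\in L^1_{\mathrm{loc}}$ requires a mild structural hypothesis on the mollifiers (for instance, $\rho_n(x)=n^d\rho(nx)$ with $\rho$ bounded and compactly supported, so that $\sup_n|f\ast\rho_n|$ is dominated by the Hardy--Littlewood maximal function of $f$); since you are free to choose the sequence, this is harmless, but ``Lebesgue differentiation theorem'' is a slight misattribution---it is the approximate-identity/maximal-function argument that is being invoked.
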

	
	\begin{proof}
		Let $ \Gamma := \{(\omega,s)\in \Omega \times [t,T] \mid x\mapsto X_s^x(\omega) \text{ is weakly differentiable} \} $. Then, by Lemma~\ref{th:jointly_mble_vesion_of_weak_derivative}, there exists an $ \R^{d\times d} $-valued $ \mathscr P\otimes \mathscr B(\R^d) $-measurable process $ (\widetilde{\nabla X}{}_s^x)_{s\in[t,T],\,x\in \R^d} $ such that
		\[
		\widetilde{\nabla X}{}_s^x(\omega) = \nabla X_s^x(\omega),\quad \text{a.e.}\;x,\;\;\text{for all }(\omega,s)\in \Gamma.
		\]
		Since by assumption, for every $ s\in[t,T] $, the set $ \{\omega\in \Omega \mid (\omega,s)\in \Gamma \} $ has full $ \Prob $-measure, the conclusion follows.
	\end{proof}
	
	\begin{dfn}\label{rem:jointly_mble_version_of_weak_derivative_process}
		Let $ (X_s^x)_{s\in[t,T],\,x\in \R^d} $ be a process satisfying the assumptions in the above lemma.
		We refer to the $ \mathscr P\otimes \mathscr B(\R^d) $-measurable process $ (\widetilde{\nabla X}{}_s^x)_{s\in[t,T],\,x\in \R^d} $ satisfying~\eqref{eq:jointly_mble_version_of_weak_derivative_process} as the \emph{$ \mathscr P\otimes \mathscr B(\R^d) $-measurable weak derivative} of $ (X_s^x)_{s\in[t,T],\,x\in \R^d} $. 
		Similarly, for random fields, we define the \emph{$ \mathscr F\otimes \mathscr B(\R^d) $-measurable weak derivative} in the same manner.
	\end{dfn}
	
	We now present the weak differentiability of solutions to SDEs  under the globally Lipschitz condition (see Assumption~\ref{assum:globally_lipschitz_coefficients}).
	The proof is given in Appendix~\ref{section:proof_of_th:differentiability_of_sol_of_SDE}.
	
	\begin{lem}\label{th:differentiability_of_sol_of_SDE}
		Suppose that $ b $ and $ \sigma $ satisfy Assumption~\ref{assum:globally_lipschitz_coefficients}.
		For each $ x\in \R^d $, let $ (X_s^x)_{s\in[t,T]} $ be the solution to the SDE
		\begin{align}\label{eq:differentiability_of_sol_of_SDE0}
			X_s^x = x + \int_t^s b(r,X_r^x)\, dr + \int_t^s \sigma(r,X_r^x)\,dW_r,\quad s\in[t,T],
		\end{align}
		and assume that $ (X_s^x)_{s\in[t,T],\,x\in \R^d} $ is $ \mathscr P\otimes \mathscr B(\R^d) $-measurable. Then for every $ s\in[t,T] $, the map $ x\mapsto X_s^x $ is weakly differentiable almost surely.
		Moreover, for any $ p\geq 2 $, there exist constants $ C_p,C_p'>0 $ depending only on $ p,\|K\|_{L^1},d $ such that
		\[
		\E [|X_s^x|^p ]
		\leq C_p(1+|x|^p),\quad
		\E \big[|\widetilde{\nabla X}{}_s^x|^p \big]
		\leq C_p',\quad \text{a.e.}\;x,\;\;\text{for all }s\in[t,T].
		\]
		In particular, for any $ p\geq 1 $ and $ s\in[t,T] $, the map $ x\mapsto X_s^x $ belongs to $ W^{1,p}_{\mathrm{loc}}(\R^d;\R^d) $ almost surely.
	\end{lem}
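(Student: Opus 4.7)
The plan is to derive the conclusion from the $L^p$-Lipschitz estimate $\E[\sup_{s\in[t,T]}|X_s^x-X_s^{x'}|^p]\le C_p|x-x'|^p$ provided by Lemma~\ref{th:SDE_standard_estim}, by means of the standard difference-quotient characterization of $W^{1,p}_{\mathrm{loc}}$, which is available since $p\ge 2>1$. The $L^p$-growth bound $\E[|X_s^x|^p]\le C_p(1+|x|^p)$ is immediate from the same lemma, so the real work lies in controlling the weak derivative.

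For each $s\in[t,T]$, $i\in\{1,\dots,d\}$, and scalar $h\ne 0$, set $D_h^iX_s^x:=h^{-1}(X_s^{x+he_i}-X_s^x)$. Raising the Lemma~\ref{th:SDE_standard_estim} estimate to the $p$-th power, dividing by $|h|^p$, and integrating over a bounded Borel set $K\subset\R^d$, Tonelli gives
\[
\E\bigg[\int_K|D_h^iX_s^x|^p\,dx\bigg]\le C_p|K|,
\]
uniformly in $h$. Hence for almost every $\omega$ the pathwise function $x\mapsto X_s^x(\omega)$ has $L^p(K)$-bounded difference quotients along any fixed sequence $h_n\to 0$. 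Since $p>1$, the Sobolev difference-quotient criterion then yields that $x\mapsto X_s^x(\omega)$ belongs to $W^{1,p}_{\mathrm{loc}}(\R^d;\R^d)$ almost surely, and each weak partial derivative satisfies $\int_K|\partial_iX_s^x(\omega)|^p\,dx\le \liminf_n\int_K|D_{h_n}^iX_s^x(\omega)|^p\,dx$. Replacing $\nabla X_s^x$ by the $\mathscr P\otimes\mathscr B(\R^d)$-measurable version $\widetilde{\nabla X}{}_s^x$ supplied by the preceding lemma, and applying Fatou after taking expectations, one obtains $\int_K\E[|\widetilde{\nabla X}{}_s^x|^p]\,dx\le C_p|K|$ for every bounded Borel $K$; since the constant does not depend on $K$, Lebesgue differentiation delivers the pointwise bound $\E[|\widetilde{\nabla X}{}_s^x|^p]\le C_p'$ for a.e.\ $x$.

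The main technical subtlety I expect is the passage from the pathwise weak-differentiability statement---each section $x\mapsto X_s^x(\omega)$ lies in $W^{1,p}_{\mathrm{loc}}$ with some weak derivative that a priori depends on $\omega$ only up to a null set---to an integrated estimate for a \emph{fixed} jointly measurable representative. This is precisely the purpose of the measurability lemma stated just before Lemma~\ref{th:differentiability_of_sol_of_SDE}: it produces a canonical $\mathscr P\otimes\mathscr B(\R^d)$-measurable process whose $\omega$-sections coincide almost everywhere in $x$ with the pathwise weak derivatives, making $\E[|\widetilde{\nabla X}{}_s^x|^p]$ well defined and legitimizing the Fubini/Fatou manipulations above. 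The final assertion that $x\mapsto X_s^x$ belongs to $W^{1,p}_{\mathrm{loc}}$ almost surely for \emph{every} $p\ge 1$ then follows from the $p\ge 2$ case together with H\"older's inequality on bounded sets.
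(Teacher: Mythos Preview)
Your difference-quotient strategy is a natural alternative to the paper's approach, which instead mollifies the coefficients, invokes Kunita's theory to get classically differentiable approximate flows $X_s^{x,n}$ whose derivatives satisfy linear SDEs with $n$-uniform $L^p$ bounds, and then passes to the limit via a Zhang-type weak-compactness criterion (Lemma~\ref{th:criterion_of_weak_differentiability}). Your route is more elementary and avoids the approximation machinery entirely.

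However, there is a genuine gap at the key step. From $\E\big[\int_K|D_{h_n}^iX_s^x|^p\,dx\big]\le C_p|K|$ uniformly in $n$ you infer that ``for almost every $\omega$ the pathwise function $x\mapsto X_s^x(\omega)$ has $L^p(K)$-bounded difference quotients along any fixed sequence $h_n\to 0$.'' This does not follow: uniformly bounded expectations of nonnegative random variables $Y_n:=\int_K|D_{h_n}^iX_s^x|^p\,dx$ do not force $\sup_nY_n<\infty$ almost surely (consider independent $Y_n$ with $\Prob(Y_n=n)=1/n$). And Kolmogorov's criterion only gives pathwise H\"older continuity with exponent strictly less than $1$, not a pathwise Lipschitz bound, so you cannot bypass the issue that way.

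The fix is to run the compactness argument in the product space rather than pathwise. Since $p>1$, the family $\{D_h^iX_s^\cdot\}_h$ is bounded in the reflexive space $L^p(\Omega\times K)$, so along a subsequence $D_{h_n}^iX_s^\cdot\rightharpoonup V$ weakly there. Testing against products $Z(\omega)\varphi(x)$ with $Z$ bounded $\mathscr F$-measurable and $\varphi\in C_c^\infty(K)$, and using the discrete integration by parts $\int_K D_{h_n}^iX_s^x\varphi\,dx=-\int_K X_s^x D_{-h_n}^i\varphi\,dx\to-\int_K X_s^x\partial_i\varphi\,dx$, you identify $V$ as the weak $i$-th partial derivative of $X_s^\cdot$ in the $dx$-sense for almost every $\omega$ (first for a countable dense family of $\varphi$, then for all). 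Weak lower semicontinuity of the norm gives $\int_K\E[|V|^p]\,dx\le C_p|K|$, and your Lebesgue-differentiation conclusion then goes through. With this repair your argument is complete and yields the same conclusion as the paper's mollification proof by a shorter route.
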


	Next, we derive the linear SDE for the Jacobian matrix in the sense of weak derivatives, with coefficients expressed explicitly in terms of those of the original SDE.
	
	\begin{prop}\label{th:weak_derivative_and_SDE}
		Suppose that $ b $ and $ \sigma $ satisfy Assumption~\ref{assum:globally_lipschitz_coefficients}, and for each $ x\in \R^d $, let $ (X_s^x)_{s\in[t,T]} $ be the solution to the SDE~\eqref{eq:differentiability_of_sol_of_SDE0}. We further assume the following:
		\begin{enumerate}
			\item $ (X_s^x)_{s\in[t,T],\,x\in \R^d} $ is $ \mathscr P\otimes \mathscr B(\R^d) $-measurable.
			\item For every $ s\in [t,T] $, it holds almost surely that the preimage of any Lebesgue null set under the map $ x\mapsto X_s^x $ is also a Lebesgue null set.
		\end{enumerate}
		Let $ \widetilde{\nabla b}, \widetilde{\nabla \sigma}_{(k)}:[0,T]\times \R^d\to \R^{d\times d},\;k=1,\dots,d' $ be any Borel functions satisfying
		\begin{align}\label{eq:weak_derivative_and_SDE4}
			\widetilde{\nabla b}(r,x)= \nabla b(r,x),\quad
			\widetilde{\nabla \sigma}_{(k)}(r,x)= \nabla \sigma_{(k)}(r,x),\quad \text{a.e.}\;x,\;\,\text{a.e.}\;r,
		\end{align}
		where $ \sigma_{(k)} $ denotes the $k$-th column of $ \sigma $.
		Then, there exists a $ \mathscr P\otimes \mathscr B(\R^d) $-measurable process $ (J_s^x)_{s\in[t,T],\,x\in \R^d} $ such that:
		\begin{enumerate}[wide,topsep=3.1pt,itemsep=-2.4pt,labelindent=8pt]
			\item[(i)] For every fixed $ s\in[t,T] $, we have $ J_s^x = \widetilde{\nabla X}{}_s^x $ a.s., a.e.\:$x$.
			\item[(ii)] For almost every $ x\in \R^d $, the process $ (J_s^x)_{s\in[t,T]} $ satisfies the SDE
			\begin{align*}
				J_s^x
				= I + \int_t^s \widetilde{\nabla b}(r,X_r^x)J_r^x\, dr + \sum_{k=1}^{d'}\int_t^s \widetilde{\nabla \sigma}_{(k)}(r,X_r^x)J_r^x\,dW^k_r,\quad s\in[t,T],
			\end{align*}
			where $ I $ denotes the $d$-dimensional identity matrix.
			\item[(iii)] For almost every $ x\in \R^d $, it holds almost surely that for all $ s\in[t,T] $,
			\begin{align*}
				&\det J_s^x\\
				&=  \exp{\bigg(\int_t^s \!\bigg(\!\tr \widetilde{\nabla b}(r,X_r^x)-\frac{1}{2}\sum_{k=1}^{d'} \tr \big[(\widetilde{\nabla \sigma}_{(k)}(r,X_r^x))^2\big]\bigg) dr  + \sum_{k=1}^{d'}\int_t^s \tr \widetilde{\nabla \sigma}_{(k)}(r,X_r^x) \,dW^k_r \bigg)}.
			\end{align*}
		\end{enumerate}
	\end{prop}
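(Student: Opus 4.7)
The plan is to construct $(J_s^x)$ directly as the unique strong solution of the linear SDE in (ii) with bounded coefficients, and then identify it with the weak derivative $\widetilde{\nabla X}_s^x$ by a mollification argument. Since $|\widetilde{\nabla b}(r,x)|\le K(r)$ and $|\widetilde{\nabla \sigma}_{(k)}(r,x)|\le K(r)^{1/2}$ by Assumption~\ref{assum:globally_lipschitz_coefficients}, the matrix-valued linear SDE
\begin{align*}
J_s^x = I + \int_t^s \widetilde{\nabla b}(r,X_r^x)\,J_r^x\,dr + \sum_{k=1}^{d'}\int_t^s \widetilde{\nabla\sigma}_{(k)}(r,X_r^x)\,J_r^x\,dW_r^k
\end{align*}
admits a unique strong solution for each $x$ with uniform-in-$x$ moment bounds, and standard arguments yield a $\mathscr P\otimes\mathscr B(\R^d)$-measurable version $(J_s^x)$. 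This establishes (ii) by construction; property (iii) then follows by applying It\^o's formula to $\log\det J_s^x$ together with Jacobi's identity.

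To establish (i), I would take smooth spatial mollifications $(b^n,\sigma^n)$ inheriting the bounds of Assumption~\ref{assum:globally_lipschitz_coefficients} with the same $K$, and let $(X_s^{n,x})$ be the corresponding SDE solutions. By classical stochastic-flow theory, $x\mapsto X_s^{n,x}$ is smooth with Jacobian $J_s^{n,x}:=\nabla X_s^{n,x}$ solving the analogous linear SDE with $(\nabla b^n,\nabla\sigma^n_{(k)})$ in place of $(\widetilde{\nabla b},\widetilde{\nabla\sigma}_{(k)})$. For a.e.\ $x$ I would then show $\E\sup_{s\in[t,T]}|J_s^{n,x} - J_s^x|\to 0$ via an $L^1$-stability of the two linear SDEs. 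Independently, standard SDE stability gives $X_s^{n,x}\to X_s^x$ in $L^p(\Omega)$ pointwise in $x$, and the uniform $L^p$-bound on $J_s^{n,x}$ from Lemma~\ref{th:differentiability_of_sol_of_SDE} provides weak $L^p$-compactness of $J_s^{n,\cdot}$ in $x$; uniqueness of weak derivatives then forces the weak limit to coincide with $\widetilde{\nabla X}_s^{\cdot}$. Combining the two limits gives $J_s^x = \widetilde{\nabla X}_s^x$ a.e.\ $x$, a.s.

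The key step is the $L^1$-stability $J_s^{n,x}\to J_s^x$. Setting $Z_s^{n,x}:=J_s^{n,x} - J_s^x$, BDG together with Gronwall reduces the question to
\[
\E\int_t^T \big|\nabla b^n(r,X_r^{n,x}) - \widetilde{\nabla b}(r,X_r^x)\big|\,|J_r^x|\,dr \to 0
\]
and its analogue for $\nabla\sigma$ (the squared integrand handled through BDG). Here assumption~(2) plays the decisive role: combined with the standard mollifier convergence $\nabla b^n\to\widetilde{\nabla b}$ a.e.\ on $\R^d$, the Lusin-N property of $x\mapsto X_r^x$ yields $\nabla b^n(r,X_r^x)\to\widetilde{\nabla b}(r,X_r^x)$ for $\Prob\otimes dr\otimes dx$-a.e.\ $(\omega,r,x)$ along a subsequence, while the residual $\nabla b^n(r,X_r^{n,x})-\nabla b^n(r,X_r^x)$ vanishes by continuity of $\nabla b^n$ for each fixed $n$ combined with $X_r^{n,x}\to X_r^x$ a.s. Since the integrand is pointwise dominated by $2K(r)|J_r^x|$ with $K\in L^1([0,T])$ and $\E|J_r^x|^p\le C_p$ uniformly, dominated convergence concludes.

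The principal obstacle lies in this simultaneous passage $n\to\infty$ in the nonlinear composition $\nabla b^n(r,X_r^{n,x})$: mollification forfeits quantitative Lipschitz control of $\nabla b^n$ (its Lipschitz constant blows up as $n\to\infty$), so quantitative stability is unavailable. The argument above circumvents this by exploiting the uniform $L^\infty$-in-$x$ bound $|\nabla b^n|\le K(r)$ together with the pointwise a.e.\ convergence furnished by assumption~(2), reducing the step to dominated convergence rather than quantitative stability.
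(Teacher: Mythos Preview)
Your overall plan---defining $J$ as the solution of the linear SDE and identifying it with $\widetilde{\nabla X}$ through a mollified approximation $J^{n,x}$---is natural, but the stability step contains a genuine gap at term~(b). The assertion that
\[
\nabla b^n(r,X_r^{n,x}) - \nabla b^n(r,X_r^x) \to 0
\]
``by continuity of $\nabla b^n$ for each fixed $n$ combined with $X_r^{n,x}\to X_r^x$'' does not follow: both the function and the evaluation point depend on $n$, and the modulus of continuity of $\nabla b^n$ degenerates. Quantitatively, mollifying a $K(r)$-Lipschitz function at scale $1/n$ gives $\|\nabla^2 b^n(r,\cdot)\|_\infty \lesssim nK(r)$, while $|b^n-b|\le K(r)/n$ and $|\sigma^n-\sigma|\le K(r)^{1/2}/n$ yield $\bigl(\E|X_r^{n,x}-X_r^x|^2\bigr)^{1/2}\lesssim 1/n$ via Lemma~\ref{th:SDE_apriori_diff}. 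A mean-value bound therefore gives only $|\nabla b^n(r,X_r^{n,x})-\nabla b^n(r,X_r^x)| \lesssim K(r)$, with no decay. Your final paragraph says the uniform bound $|\nabla b^n|\le K(r)$ together with the a.e.\ convergence coming from assumption~(2) ``circumvents'' the lack of quantitative stability, but that reasoning addresses term~(a) only; assumption~(2) concerns the map $x\mapsto X_r^x$, not $x\mapsto X_r^{n,x}$, and says nothing about the oscillation of $\nabla b^n$ between the two nearby points $X_r^{n,x}$ and $X_r^x$.

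The paper sidesteps this double limit entirely. Instead of building $J$ independently and then proving $J^{n,x}\to J^x$, it \emph{defines}
\[
J_s^x := I + \int_t^s \widetilde{\nabla b}(r,X_r^x)\,\widetilde{\nabla X}_r^x\,dr + \sum_{k}\int_t^s \widetilde{\nabla\sigma}_{(k)}(r,X_r^x)\,\widetilde{\nabla X}_r^x\,dW_r^k
\]
and proves $J_s^x=\widetilde{\nabla X}_s^x$ by pairing the original SDE for $X_s^x$ against $\operatorname{div}\varphi(x)$, applying Fubini and the stochastic Fubini theorem to swap $\int_{\R^d}\cdots\,dx$ with the time and stochastic integrals, and then invoking a chain rule for the weak derivative of $x\mapsto b(r,X_r^x)$ with Lipschitz outer function. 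Assumption~(2) enters exactly at that chain rule: it guarantees the null set on which $\widetilde{\nabla b}(r,\cdot)\neq\nabla b(r,\cdot)$ pulls back to a null set in $x$, so the composition is unambiguous. No limit is ever taken inside $\nabla b(r,X_r^x)$, which is precisely the step your scheme cannot control.
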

	
	\begin{proof}
		By Lemma~\ref{th:differentiability_of_sol_of_SDE}, we have that for every $ s\in[t,T] $, the map $ x\mapsto X_s^x $ is weakly differentiable almost surely, and there exists a constant $ C>0 $ such that
		\[
		\E [|X_s^x|^2 ]
		\leq C(1+|x|^2),\quad
		\E [|\widetilde{\nabla X}{}_s^x|^2 ]
		\leq C,\quad \text{a.e.}\:x,\;\;\text{for all } s\in[t,T],
		\]
		where $ (\widetilde{\nabla X}{}_s^x)_{s\in[t,T],\,x\in \R^d} $ is the $ \mathscr P\otimes \mathscr B(\R^d) $-measurable weak derivative of $ (X_s^x)_{s\in[t,T],\,x\in \R^d} $.
		
		Then for any bounded set $ B\in \mathscr B(\R^d) $, we have
		\begin{align}\label{eq:weak_derivative_and_SDE3}
			&\int_{B}\E\bigg[\int_{t}^T (|b(r,X_r^x)|+|\sigma_{(k)}(r,X_r^x)|^2)\, dr\bigg]\,dx
			<\infty,\notag\\
			&\int_{B}\E\bigg[\int_{t}^T (|\widetilde{\nabla b}(r,X_r^x)\widetilde{\nabla X}{}_r^x |+|\widetilde{\nabla \sigma}_{(k)}(r,X_r^x)\widetilde{\nabla X}{}_r^x|^2)\, dr\bigg]\,dx
			<\infty.
		\end{align}
		The first claim follows from the moment estimate and the linear growth condition. For the second, note that by Assumption~\ref{assum:globally_lipschitz_coefficients} and~\eqref{eq:weak_derivative_and_SDE4},
		\[
		|\widetilde{\nabla b}(r,x)|\leq \sqrt{d}K(r),\quad
		|\widetilde{\nabla \sigma}_{(k)}(r,x)|\leq \sqrt{d}K(r)^{1/2},\quad\text{a.e.}\;x,\;\;\text{a.e.}\;r.
		\]
		In view of assumption (2), we then have
		\[
		|\widetilde{\nabla b}(r,X_r^x)|\leq \sqrt{d}K(r),\quad
		|\widetilde{\nabla \sigma}_{(k)}(r,X_r^x)|\leq \sqrt{d}K(r)^{1/2},\quad\text{a.e.}\;x,\;\;\text{a.s.},\;\;\text{a.e.}\;r
		\]
		and the required integrability follows from Fubini's theorem and the moment bound on $ \widetilde{\nabla X}{}_r^x $.
		
		For any process $ (Y_s^x)_{s\in[t,T]} $ defined for almost every $ x\in \R^d $, we use the notation $ (\roverline{Y_s^x})_{s\in[t,T],\,x\in \R^d} $ to denote any $ \mathscr P\otimes \mathscr B(\R^d) $-measurable process satisfying
		\[
		\roverline{Y_s^x} = Y_s^x,\quad s\in[t,T],\;\;\text{a.s.},\;\;\text{a.e.}\:x,
		\]
		if such a version exists.		
		Now define the $ \mathscr P\otimes \mathscr B(\R^d) $-measurable process $ (J_s^x)_{s\in[t,T],\,x\in \R^d} $ by
		\[
		J_s^x:= I + \roverline{\int_t^s \widetilde{\nabla b}(r,X_r^x)\widetilde{\nabla X}{}_r^x\, dr} + \sum_{k=1}^{d'}\roverline{\int_t^s \widetilde{\nabla \sigma}_{(k)}(r,X_r^x)\widetilde{\nabla X}{}_r^x\,dW^k_r}.
		\]
		(This is well-defined in view of~\eqref{eq:weak_derivative_and_SDE3}.) We claim that
		\begin{align}\label{eq:weak_derivative_and_SDE2}
			\widetilde{\nabla X}{}_s^x = J_s^x,\quad\text{a.e.}\;x,\;\;\text{a.s.},\;\;\text{for all } s\in[t,T].
		\end{align}
		Fix $ s\in[t,T] $ and $ \varphi \in C_c^\infty(\R^d;\R^d) $. By Fubini's theorem and the stochastic Fubini theorem,
		\begin{align*}
			&\int_{\R^d} X_s^x \divg\varphi(x)\,dx \\
			&=\int_{\R^d} \bigg(x + \int_t^s b(r,X_r^x)\, dr + \sum_{k=1}^{d'}\roverline{\int_t^s \sigma_{(k)}(r,X_r^x)\,dW^k_r}\, \bigg)\divg \varphi(x)\,dx \\
			&=  -\int_{\R^d}I\varphi(x)\,dx + \int_t^s \int_{\R^d}b(r,X_r^x)\divg \varphi(x)\,dx\, dr + \sum_{k=1}^{d'}\int_t^s \int_{\R^d}\sigma_{(k)}(r,X_r^x)\divg \varphi(x)\,dx\,dW^k_r
		\end{align*}
		holds almost surely. Moreover, by assumption (2) and Lemma~\ref{th:chain_rule_for_weak_derivative_Lipschitz}, we have
		\begin{align*}
			&\int_{\R^d}b(r,X_r^x)\divg \varphi(x)\,dx 
			=-\int_{\R^d}\widetilde{\nabla b}(r,X_r^x)\widetilde{\nabla X}{}_r^x \varphi(x)\,dx,\\
			&\int_{\R^d}\sigma_{(k)}(r,X_r^x)\divg \varphi(x)\,dx 
			=-\int_{\R^d}\widetilde{\nabla \sigma}_{(k)}(r,X_r^x)\widetilde{\nabla X}{}_r^x \varphi(x)\,dx,\quad \text{a.s.},\;\;\text{a.e.}\;r.
		\end{align*}
		Then, applying Fubini and stochastic Fubini again yields
		\begin{align*}
			&\begin{aligned}
				\int_{\R^d} X_s^x \divg\varphi(x)\,dx
				= -\int_{\R^d}I\varphi(x)\,dx &-\int_t^s \int_{\R^d}\widetilde{\nabla b}(r,X_r^x)\widetilde{\nabla X}{}_r^x \varphi(x)\,dx\,dr\\
				& - \sum_{k=1}^{d'}\int_t^s \int_{\R^d}\widetilde{\nabla \sigma}_{(k)}(r,X_r^x)\widetilde{\nabla X}{}_r^x \varphi(x)\,dx\,dW^k_r
			\end{aligned}\\
			&= -\int_{\R^d} \bigg(I + \roverline{\int_t^s \widetilde{\nabla b}(r,X_r^x)\widetilde{\nabla X}{}_r^x\, dr} + \sum_{k=1}^{d'}\roverline{\int_t^s \widetilde{\nabla \sigma}_{(k)}(r,X_r^x)\widetilde{\nabla X}{}_r^x\,dW^k_r}\, \bigg)\varphi(x)\,dx,\quad\text{a.s.}
		\end{align*}
		Since $ C^\infty(\R^d;\R^d) $ is separable, there exists a $\Prob$-null set outside of which the above equality holds for all $ \varphi\in C_c^\infty(\R^d;\R^d) $. This implies~\eqref{eq:weak_derivative_and_SDE2}.
		
		From~\eqref{eq:weak_derivative_and_SDE2}, we obtain (i) as well as
		\begin{align*}
			\widetilde{\nabla X}{}_s^x = J_s^x,\quad\text{a.e.}\;s,\;\;\text{a.s.},\;\;\text{a.e.}\;x.
		\end{align*}
		By combining the definition of $ (J_s^x)_{s\in[t,T],\,x\in \R^d} $ with the above identity, we deduce (ii).
		
		Finally, (iii) follows from the stochastic version of the Liouville formula (see, e.g., \cite[Chapter~3, Theorem~2.2]{Mao2008}\footnote{In \cite{Mao2008}, the Liouville formula is stated for linear matrix-valued SDEs with bounded and non-random coefficient. In our setting, the coefficients in (ii) are random, but the same proof applies under suitable integrability conditions.}
		 or \cite{Vrkov1978}).
	\end{proof}
	
	\begin{rem}
		One alternative approach to weak differentiability for Lipschitz SDEs---and to Jacobian representations as in Proposition~\ref{th:weak_derivative_and_SDE}~(ii)---relies on Dirichlet forms; see Bouleau--Hirsch~\cite{BouleauHirsch1988}.
		Their results are formulated on product spaces for SDEs with time-independent Lipschitz constants and in an $L^2$-framework. 
		
		In our setting, however, we allow time-dependent Lipschitz constants and require $L^p$-type moment estimates for the Jacobian, so the results of \cite{BouleauHirsch1988} do not apply directly.
		For this reason, and to keep our argument more self-contained, we have stated and proved Lemma~\ref{th:differentiability_of_sol_of_SDE} and Proposition~\ref{th:weak_derivative_and_SDE} above, using a more direct argument.
	\end{rem}
	
	Using the explicit determinant representation of the Jacobian matrix in Proposition~\ref{th:weak_derivative_and_SDE}~(iii), we derive the following bounds.
	
	\begin{lem}\label{th:estimate_of_determinant_of_sol_of_SDE}
		Under the same assumptions as in Proposition~\ref{th:weak_derivative_and_SDE}, suppose further that there exist a nonnegative function $ \widetilde{K} \in L^1([0,T]) $ and a constant $ N > 0 $ such that
		\begin{align}\label{eq:estimate_of_determinant_of_sol_of_SDE1}
			\begin{aligned}
				&\bigg|\tr \nabla b(r,x)-\frac{1}{2}\sum_{k=1}^{d'}\tr \big[(\nabla \sigma_{(k)}(r,x))^2\big]\bigg|\leq \widetilde{K}(r),\quad
				\sum_{k=1}^{d'}\big(\!\tr \nabla \sigma_{(k)}(r,x)\big)^2\leq \widetilde{K}(r),\\ 
				&\text{for a.e.}\;x\, \text{ with } |x|\leq N,\;\;\text{a.e.}\;r\in [0,T].
			\end{aligned}
		\end{align}
		Define the stopping time
		\[
		\tau_N^x := \inf\{ s \in [t,T] \mid |X_s^x| \geq N \} \wedge T.
		\]
		Then the following hold:
		\begin{enumerate}
			\item For any fixed $ s \in [t,T] $, we have $ \det \widetilde{\nabla X}{}_s^x \neq 0 $ almost surely for almost every $ x $.
			\item Let $ \alpha \in \R $ and set $ c_\alpha := \alpha^2/2 + |\alpha| $. Then
			\begin{align*}
				\E\big[\big|\!\det \widetilde{\nabla X}{}_s^x\big|^\alpha \mathbf{1}_{\{\tau_{N}^x\geq s\}}\big]
				\leq e^{c_\alpha \|\widetilde{K}\|_{L^1}},\quad \text{a.e.}\;x,\;\;\text{for all } s\in[t,T].
			\end{align*}
			In particular, if \eqref{eq:estimate_of_determinant_of_sol_of_SDE1} holds for all $ N > 0 $, where $ \widetilde{K}$ is independent of $N$, then
			\begin{align*}
				\E\big[\big|\!\det \widetilde{\nabla X}{}_s^x\big|^\alpha \big]
				\leq e^{c_\alpha \|\widetilde{K}\|_{L^1}},\quad \text{a.e.}\;x,\;\;\text{for all } s\in[t,T].
			\end{align*}
		\end{enumerate}
	\end{lem}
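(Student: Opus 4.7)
The plan is to read off the desired bound directly from the stochastic Liouville representation in Proposition~\ref{th:weak_derivative_and_SDE}~(iii). Setting
\[
A_r := \tr\widetilde{\nabla b}(r,X_r^x)-\tfrac12\sum_{k=1}^{d'}\tr\bigl[(\widetilde{\nabla\sigma}_{(k)}(r,X_r^x))^2\bigr], \qquad B_r^k := \tr\widetilde{\nabla\sigma}_{(k)}(r,X_r^x),
\]
that representation gives $\det J_s^x = \exp\bigl(\int_t^s A_r\,dr + \sum_k\int_t^s B_r^k\,dW_r^k\bigr)$, which is positive almost surely. Combined with part~(i) of the same proposition, this yields (1) immediately: for each fixed $s$, $\det\widetilde{\nabla X}{}_s^x = \det J_s^x \neq 0$ a.s., for a.e.~$x$.

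For (2), first I would raise the previous identity to the power~$\alpha$ and complete the square in the stochastic integral, writing
\[
(\det J_s^x)^{\alpha}
= M_s^{\alpha}\,\exp\!\Bigl(\alpha\!\int_t^s A_r\,dr + \tfrac{\alpha^2}{2}\sum_{k}\!\int_t^s (B_r^k)^2\,dr\Bigr),
\]
where $M_u^{\alpha}$ denotes the exponential (local) martingale $\exp(\alpha\sum_k\int_t^u B_r^k\,dW_r^k-\tfrac{\alpha^2}{2}\sum_k\int_t^u (B_r^k)^2\,dr)$. The point is to control the deterministic correction term on $\{\tau_N^x\geq s\}$ and to turn $M^\alpha$ into a bona fide martingale by stopping at $\tau_N^x$.

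The path-level bounds come from assumption~\eqref{eq:estimate_of_determinant_of_sol_of_SDE1}: for a.e.~$x$ with $|x|\leq N$ and a.e.~$r$, $|A_r|\leq \widetilde{K}(r)$ in the sense of~\eqref{eq:weak_derivative_and_SDE4}, and $\sum_k (B_r^k)^2\leq\widetilde{K}(r)$. Here assumption~(2) of Proposition~\ref{th:weak_derivative_and_SDE} (the Lusin-type property of $x\mapsto X_r^x$) is essential: it transfers these a.e.-in-$x$ bounds into a.e.-in-$r$ pathwise bounds for $A_r$ and $B_r^k$ on the set $\{r\leq\tau_N^x\}$. Consequently, on $\{\tau_N^x\geq s\}$,
\[
\Bigl|\alpha\!\int_t^s A_r\,dr\Bigr| + \tfrac{\alpha^2}{2}\sum_{k}\!\int_t^s (B_r^k)^2\,dr
\leq \bigl(|\alpha|+\tfrac{\alpha^2}{2}\bigr)\|\widetilde{K}\|_{L^1} = c_\alpha\|\widetilde{K}\|_{L^1}.
\]
Moreover, stopping the exponential at $\tau_N^x$ makes $\sum_k\!\int_t^{u\wedge\tau_N^x}(B_r^k)^2\,dr$ bounded by $\|\widetilde K\|_{L^1}$, so Novikov's criterion applies and the stopped exponential is a true martingale with mean one.

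Putting these ingredients together, on $\{\tau_N^x\geq s\}$ one has $M_s^\alpha = M_{s\wedge\tau_N^x}^\alpha$, and hence
\[
\E\bigl[(\det J_s^x)^\alpha\mathbf{1}_{\{\tau_N^x\geq s\}}\bigr]
\leq e^{c_\alpha\|\widetilde{K}\|_{L^1}}\,\E\bigl[M_{s\wedge\tau_N^x}^{\alpha}\bigr]
= e^{c_\alpha\|\widetilde{K}\|_{L^1}}.
\]
The unconditional bound when~\eqref{eq:estimate_of_determinant_of_sol_of_SDE1} holds for every $N$ then follows by Fatou's lemma, sending $N\to\infty$ and using $\tau_N^x\to T$ a.s. The main obstacle I anticipate is the rigorous justification of the pathwise substitution just described: because $\widetilde{\nabla b}$ and $\widetilde{\nabla\sigma}_{(k)}$ are only defined almost everywhere, one must be careful that the exceptional null set where the bound~\eqref{eq:estimate_of_determinant_of_sol_of_SDE1} fails is not charged by the law of $X_r^x$ along the paths, which is precisely where the Lusin-type hypothesis~(2) of Proposition~\ref{th:weak_derivative_and_SDE} is used.
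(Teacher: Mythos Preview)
Your argument is correct and follows essentially the same route as the paper: use the Liouville representation from Proposition~\ref{th:weak_derivative_and_SDE}(iii), factor $(\det J_s^x)^\alpha$ as an exponential correction times an exponential local martingale, bound the correction by $c_\alpha\|\widetilde K\|_{L^1}$ on $\{r\le\tau_N^x\}$, and use Novikov to make the stopped exponential a true martingale; part~(1) and the global bound via Fatou are handled exactly as you describe.

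The one point where the paper proceeds differently is the transfer of the a.e.\ bounds~\eqref{eq:estimate_of_determinant_of_sol_of_SDE1} to pathwise bounds on $A_r$ and $B_r^k$. You invoke the Lusin-type hypothesis~(2) of Proposition~\ref{th:weak_derivative_and_SDE} and a Fubini argument to ensure that, for a.e.~$x$, the path $X_r^x$ avoids the exceptional null set a.s.\ for a.e.~$r$. The paper instead \emph{chooses} the Borel representatives $\widetilde{\nabla b},\widetilde{\nabla\sigma}_{(k)}$ from the outset so that the inequalities in~\eqref{eq:estimate_of_determinant_of_sol_of_SDE1} hold for \emph{every} $(r,x)$ with $|x|\le N$; since these representatives are only constrained a.e., one may freely modify them on the null set where the bound fails. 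With this choice, $|X_{r}^x|\le N$ on $\{t<r\le\tau_N^x\}$ gives the pathwise bounds immediately, with no further measure-theoretic argument. Both approaches are valid; the paper's is slightly cleaner and shows that assumption~(2) is not needed for this particular step (it enters only through Proposition~\ref{th:weak_derivative_and_SDE} itself).
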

	
	\begin{proof}
		Let us choose Borel functions $\widetilde{\nabla b}, \widetilde{\nabla \sigma}_{(k)} : [0,T] \times \R^d \to \R^{d \times d}$ for $k = 1, \dots, d'$ satisfying~\eqref{eq:weak_derivative_and_SDE4}, such that for all $(r,x) \in [0,T] \times \R^d$ with $|x| \leq N$,
		\begin{align*}
			\bigg|\tr \widetilde{\nabla b}(r,x)-\frac{1}{2}\sum_{k=1}^{d'}\tr \big[(\widetilde{\nabla \sigma}_{(k)}(r,x))^2\big]\bigg|\leq \widetilde{K}(r),\quad
			\sum_{k=1}^{d'}\big(\!\tr \widetilde{\nabla \sigma}_{(k)}(r,x)\big)^2\leq \widetilde{K}(r).
		\end{align*}
		Then, by Proposition~\ref{th:weak_derivative_and_SDE}, there exists a $ \mathscr{P} \otimes \mathscr{B}(\R^d) $-measurable process $ (J_s^x)_{s \in [t,T],\, x \in \R^d} $ such that
		\begin{align}\label{eq:estimate_of_determinant_of_sol_of_SDE}
			\widetilde{\nabla X}{}_s^x = J_s^x,\quad\text{a.s.},\;\;\text{a.e.}\;x,\;\;\text{for all } s\in[t,T],
		\end{align}
		and such that for almost every $ x\in\R^d $, it holds almost surely that the identity
		\begin{align*}
			&\det J_s^x\\
			&=  \exp{\bigg(\int_t^s \bigg(\!\tr \widetilde{\nabla b}(r,X_r^x)-\frac{1}{2}\sum_{k=1}^{d'} \tr \big[(\widetilde{\nabla \sigma}_{(k)}(r,X_r^x))^2\big]\bigg) dr  + \sum_{k=1}^{d'}\int_t^s \tr \widetilde{\nabla \sigma}_{(k)}(r,X_r^x) \,dW^k_r \bigg)}
		\end{align*}
		holds for all $ s \in [t,T] $.
		
		Fix such $ x $. Then in particular, $ \det J_s^x > 0 $ for all $ s \in [t,T] $ almost surely, and
		\begin{align*}
			&|\det J_s^x|^\alpha\\
			&\!= \exp{\bigg(\!\int_t^s \bigg(\alpha\bigg(\!\tr \widetilde{\nabla b}(r,X_r^x)-\frac{1}{2}\sum_{k=1}^{d'} \tr [(\widetilde{\nabla \sigma}_{(k)}(r,X_r^x))^2]\bigg) + \frac{\alpha^2}{2}\sum_{k=1}^{d'}(\tr \widetilde{\nabla \sigma}_{(k)}(r,X_r^x))^2\bigg) \,dr \bigg)}\\
			&\quad \times \exp{\bigg( \alpha \sum_{k=1}^{d'}\int_t^s \tr \widetilde{\nabla \sigma}_{(k)}(r,X_r^x) \,dW^k_r -\frac{\alpha^2}{2}\sum_{k=1}^{d'}\int_t^s (\tr \widetilde{\nabla \sigma}_{(k)}(r,X_r^x))^2 \,dr \bigg)}\\
			&\!=: H^x_s M^x_s.
		\end{align*}
		Note that $ |X_r^x| \mathbf{1}_{\{r \leq \tau_N^x\}} \leq N $ for $ r > t $, so by the choice of $ \widetilde{\nabla b} $ and $ \widetilde{\nabla \sigma}_{(k)} $,
		\begin{align*}
			H^x_{s\wedge \tau_N^x}
			&\leq \exp \bigg(|\alpha| \int_t^{s}\widetilde{K}(r)\,dr +  \frac{\alpha^2}{2}\int_t^{s} \widetilde{K}(r) \,dr \bigg).
		\end{align*}
		Similarly, we have
		\begin{align*}
			\E\bigg[ \exp\bigg( \frac{\alpha^2}{2}\sum_{k=1}^{d'}\int_t^{s\wedge \tau_N^x} (\tr \widetilde{\nabla \sigma}_{(k)}(r,X_r^x))^2 \,dr \bigg) \bigg]
			\leq \exp \bigg(\frac{\alpha^2}{2}\int_t^{s} \widetilde{K}(r) \,dr \bigg) <\infty,
		\end{align*}
		and hence $ (M_{s \wedge \tau_N^x}^x)_{s \in [t,T]} $ is a true martingale. Consequently, $ \E[M_{s \wedge \tau_N^x}^x] = 1 $, and thus
		\begin{align*}
			\E[|\det J_s^x|^\alpha \mathbf{1}_{\{\tau_{N}^x\geq s\}}]\leq
			\E[|\det J_{s\wedge \tau_N^x}^x|^\alpha]
			&=  \E[H^x_{s\wedge \tau_N^x} M^x_{s\wedge \tau_N^x}]
			\leq \exp \Big(\Big(|\alpha|+\frac{\alpha^2}{2}\Big)\|\widetilde{K}\|_{L^1} \Big).
		\end{align*}
		
		Now, by \eqref{eq:estimate_of_determinant_of_sol_of_SDE}, assertion~(1) follows and, for any fixed $ s \in [t,T] $, we have
		\begin{align*}
			\E[|\det \widetilde{\nabla X}{}_s^x|^\alpha \mathbf{1}_{\{\tau_{N}^x\geq s\}}]
			&= \E[|\det J_s^x|^\alpha \mathbf{1}_{\{\tau_{N}^x\geq s\}}]
			\leq  \exp \Big(\Big(|\alpha|+\frac{\alpha^2}{2}\Big)\|\widetilde{K}\|_{L^1} \Big),\quad \text{a.e.}\;x.
		\end{align*}
		The final assertion follows from Fatou's lemma.
	\end{proof}

	\section{Proof of the main theorem}	\label{section:proof_of_main_theorem}
	We begin by stating several lemmas from Sobolev space theory and measure theory that are needed for the proof of the main theorem.
	
	\begin{lem}\label{th:regularity_of_cont_Sobolev_function}
		Let $ p > d $, and let $u:\R^d\to\R^d$ be a continuous map belonging to $W^{1,p}_{\mathrm{loc}}(\R^d;\R^d)$. Then:
		\begin{enumerate}
			\item The function $ u $ is differentiable almost everywhere, and its weak derivative coincides almost everywhere with the classical derivative.
			\item The function $ u $ maps every Lebesgue null set to a Lebesgue null set.
		\end{enumerate}
	\end{lem}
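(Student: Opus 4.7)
The plan is to derive both parts from Morrey's embedding, which is available in precisely the regime $p>d$. Morrey's inequality gives that any $u\in W^{1,p}_{\mathrm{loc}}(\R^d;\R^d)$ with $p>d$ admits a continuous representative (which must coincide with our continuous $u$) and satisfies, on every ball $B(x,r)\subset\R^d$, the oscillation bound
\[
\sup_{y\in B(x,r)}|u(y)-u(x)|\le C\,r^{\,1-d/p}\,\|\nabla u\|_{L^p(B(x,r))}.
\]

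For part~(1), the plan is to invoke the classical Calder\'on theorem: every $u\in W^{1,p}_{\mathrm{loc}}(\R^d)$ with $p>d$ is classically differentiable almost everywhere, with classical derivative agreeing a.e.\ with the weak derivative (see, e.g., Evans--Gariepy's \emph{Measure Theory and Fine Properties of Functions}, or Ziemer's \emph{Weakly Differentiable Functions}). A self-contained argument is short: at any $p$-Lebesgue point $x_0$ of $\nabla u$ (a set of full measure since $\nabla u\in L^p_{\mathrm{loc}}$), apply Morrey's bound to the remainder $v(y):=u(y)-u(x_0)-\nabla u(x_0)(y-x_0)$, whose weak gradient is $\nabla u(y)-\nabla u(x_0)$. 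One finds $\|\nabla v\|_{L^p(B(x_0,r))}=r^{d/p}\cdot o(1)$ as $r\downarrow0$, so Morrey yields $\sup_{|y-x_0|\le r}|v(y)|\le C r\cdot o(1)=o(r)$, which is precisely classical differentiability of $u$ at $x_0$ with derivative $\nabla u(x_0)$.

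For part~(2), the plan is to run a Marcus--Mizel-type covering argument (the Lusin (N) property for continuous Sobolev maps when $p>d$). Given a null set $E\subset\R^d$ and $\varepsilon>0$, choose a bounded open set $U\supset E$ with $|U|<\varepsilon$ and, by a Vitali- or Besicovitch-type covering, extract balls $B_i=B(x_i,r_i)\subset U$ of bounded overlap with $E\subset\bigcup_i B_i$ and $\sum_i r_i^d\lesssim |U|<\varepsilon$. Morrey's bound gives $\mathrm{diam}\,u(B_i)\le C r_i^{1-d/p}\|\nabla u\|_{L^p(B_i)}$, whence
\[
|u(B_i)|\le C\,r_i^{\,d-d^2/p}\,\Big(\int_{B_i}|\nabla u|^p\Big)^{d/p}.
\]
Applying H\"older's inequality with exponents $p/(p-d)$ and $p/d$, and using $(d-d^2/p)\cdot p/(p-d)=d$ together with the bounded-overlap property to control $\sum_i\int_{B_i}|\nabla u|^p$ by a multiple of $\|\nabla u\|_{L^p(U)}^p$, I obtain
\[
\sum_i|u(B_i)|\le C\,\varepsilon^{(p-d)/p}\,\|\nabla u\|_{L^p(U)}^d.
\]
Letting $\varepsilon\downarrow0$ forces $|u(E)|=0$.

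The main obstacle, if one insists on a fully self-contained treatment, is the book-keeping in the covering step of~(2) (choosing the cover, controlling overlaps, and applying H\"older); both parts are otherwise immediate consequences of Morrey's inequality and may equally well be cited from the standard Sobolev literature.
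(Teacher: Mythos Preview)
Your proposal is correct. The paper's own proof consists purely of citations: part~(1) is referred to \cite{Evans1998} (Theorem~5 in Section~5.8) and part~(2) to \cite{HenclKoskela2014} (Theorem~4.2), with no argument given beyond noting that the local statement in the latter reference globalizes by writing $E$ as a countable union of bounded pieces. Your write-up goes further and sketches the actual proofs underlying those citations --- Calder\'on's differentiability theorem via Morrey's inequality at $L^p$-Lebesgue points of $\nabla u$ for~(1), and the Marcus--Mizel covering argument for the Lusin~(N) property for~(2) --- which are exactly the arguments one finds in those references. So there is no genuine difference in approach; you have simply unpacked what the paper leaves as a black box. One small caveat: in part~(2) you assume $E$ is contained in a bounded open set $U$, which tacitly requires first reducing to bounded $E$ (as the paper does explicitly via $E\cap B$); this is harmless but worth stating.
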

	
	\begin{proof}
		Statement (1) is Theorem~5 in Section~5.8 of \cite{Evans1998}.
		For (2), Theorem~4.2 in \cite{HenclKoskela2014} asserts that for any Lebesgue null set $ E\subset \R^d $ and any open ball $ B\subset \R^d $, the image $ u(E\cap B) $ is also a Lebesgue null set. Hence, $ u(E) $ itself is a Lebesgue null set.
	\end{proof}
	
	\begin{lem}[Change-of-variables formula {\cite[Theorem 8.21]{Leoni2017}}]\label{th:change_of_variables_formula}
		Let $ U\subset \R^d $ be open, and let $ \Psi:U\to \R^d $ be a continuous function.
		Suppose there exist Lebesgue measurable sets $ F,G\subset \R^d $ such that $ \Psi $ is differentiable at every point of $ F $ and injective on $ G $, and such that the sets $ U\setminus F $, $ \Psi(U\setminus F) $, and $ \Psi(U\setminus G) $ are all Lebesgue null sets.
		Then, for any Lebesgue measurable set $ E\subset U $ and any Lebesgue measurable function $ f:\Psi(E)\to[0,\infty] $, we have
		\[
		\int_{\Psi(E)}f(x)\,dx
		= \int_E f(\Psi(y))|\det \nabla \Psi(y)|\,dy.
		\]
	\end{lem}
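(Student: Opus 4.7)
The plan is to reduce the change-of-variables identity to the area formula for almost-everywhere differentiable maps satisfying Lusin's $(N)$ condition, and then to invoke that formula via a Lipschitz-approximation argument. Since this lemma is cited verbatim from \cite[Theorem~8.21]{Leoni2017}, what follows is essentially a roadmap of the standard proof.

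First I would reduce to indicators and to a smaller domain. By linearity and monotone convergence it suffices to treat $f=\mathbf{1}_A$ for a Lebesgue measurable set $A\subset \Psi(E)$; after replacing $E$ by $E\cap \Psi^{-1}(A)$, the task becomes showing $|\Psi(E)|=\int_E |\det \nabla \Psi(y)|\,dy$ for an arbitrary measurable $E\subset U$. Using $|U\setminus F|=0$ together with $|\Psi(U\setminus F)|=|\Psi(U\setminus G)|=0$, one may restrict $E$ to $E\cap F\cap G$ without changing either side, extending $\nabla \Psi$ by $0$ off $F$. After this reduction $\Psi$ is differentiable at every point of $E$ and injective on $E$.

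Next I would apply the area formula
\[
\int_E |\det \nabla \Psi(y)|\,dy \;=\; \int_{\R^d} N(\Psi,E,x)\,dx, \qquad N(\Psi,E,x):=\#(\Psi^{-1}(x)\cap E),
\]
which holds for any continuous a.e.\ differentiable map satisfying Lusin's $(N)$ property; in the present setting Lusin $(N)$ is built into the hypothesis $|\Psi(U\setminus F)|=0$. Injectivity of $\Psi$ on $E\subset G$ forces $N(\Psi,E,\cdot)=\mathbf{1}_{\Psi(E)}$ almost everywhere, so the right-hand side equals $|\Psi(E)|$, which is what I wanted.

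The main obstacle is the area formula itself under such weak hypotheses, and this is where I would concentrate the argument. The standard route is to split $F$ into the layers $F_n:=\{y\in F:|\nabla \Psi(y)|\leq n\}$; on each $F_n$, Whitney's extension theorem (or Federer's Lipschitz-truncation lemma) produces, for every $\varepsilon>0$, a closed set $K\subset F_n$ with $|F_n\setminus K|<\varepsilon$ together with a Lipschitz map $\widetilde\Psi:\R^d\to \R^d$ whose values and classical derivative coincide with $\Psi$ and $\nabla \Psi$ on $K$. Federer's area formula for Lipschitz maps then applies to $\widetilde\Psi$ on $K$, and a summation over $n$ and $\varepsilon\downarrow 0$, with the discarded pieces controlled by the Lusin $(N)$ hypothesis, yields the general area formula; see \cite[Theorem~8.21]{Leoni2017} for the detailed execution.
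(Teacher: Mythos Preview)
The paper offers no proof of this lemma; it is stated as a direct citation of \cite[Theorem~8.21]{Leoni2017}, so there is nothing to compare against beyond the textbook argument itself. Your roadmap---reduction to indicators, passage to the differentiable/injective locus, the area formula via Whitney-type Lipschitz approximation, and collapse of the multiplicity function---is a correct outline of that standard proof.

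One small imprecision worth fixing: in your reduction step you claim one may replace $E$ by $E\cap F\cap G$ ``without changing either side,'' but on the integral side this requires $\int_{E\cap F\setminus G}|\det\nabla\Psi(y)|\,dy=0$, and the hypotheses do not give $|U\setminus G|=0$. The clean way around this is not to restrict the domain of integration at all, but to apply the area formula on $E\cap F$ and then note that $N(\Psi,E\cap F,x)$ agrees a.e.\ with $\mathbf{1}_{\Psi(E\cap F\cap G)}(x)$, since the contribution from $E\cap F\setminus G$ is supported in the null set $\Psi(U\setminus G)$. With that adjustment the argument goes through.
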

	
	\begin{rem}
		Suppose that $ \Psi:\R^d\to \R^d $ is a homeomorphism such that $ \Psi\in W^{1,p}_{\mathrm{loc}}(\R^d;\R^d) $ for some $ p>d $.
		Then, taking into account Lemma~\ref{th:regularity_of_cont_Sobolev_function}, the map $ \Psi $ satisfies the assumptions of the above lemma, and the change-of-variables formula holds. (By Lemma~\ref{th:regularity_of_cont_Sobolev_function}, the right-hand side remains valid whether $ \nabla\Psi $ is interpreted as the weak derivative or the classical derivative.)
	\end{rem}
	
	In addition, we prepare the following lemma concerning an a priori estimate for solutions to SDEs.
	
	\begin{lem}\label{th:apriori_estimate_of_rho(X)}
		Let $ b: [0,T] \times \R^d\to \R^d $ and $ \sigma: [0,T] \times \R^d\to \R^{d\times d'} $ be Borel measurable. 
		Let $ \rho:\R^d\to (0,\infty) $ be a $ C^2 $ function such that there exist a nonnegative function $ \widetilde{K}\in L^1([0,T]) $ and a constant $ N>0 $ satisfying
		\begin{align}
			\begin{aligned}\label{eq:apriori_estimate_of_rho(X)}
				&|\langle b(r,x),\nabla\rho(x) \rangle|\leq \widetilde{K}(r)\rho(x),\quad
				|\sigma(r,x)||\nabla\rho(x)|\leq \widetilde{K}(r)^{1/2}\rho(x),\\
				&|\sigma(r,x)|^2|\nabla^2\rho(x)|_{\mathrm{op}}\leq \widetilde{K}(r)\rho(x),\quad |x|\leq N,\;\;r\in[0,T].
			\end{aligned}
		\end{align}
		Let $ (t,x_0)\in[0,T]\times \R^d $, and let $ (X_s)_{s\in[t,T]} $ be the solution to the SDE
		\[
		X_s = x_0 + \int_t^s b(r,X_r)\, dr + \int_t^s \sigma(r,X_r)\,dW_r,\quad s\in[t,T].
		\]
		Define
		\[
		\tau_N :=\inf \{s\in [t,T] \mid |X_s|\geq N \} \wedge T.
		\]
		Then, for any $ \alpha \in \R $, setting $ c_\alpha:=\alpha^2/2 + 2|\alpha| $, we have
		\begin{align*}
			\E\bigg[\bigg| \frac{\rho(X_{s\wedge\tau_N})}{\rho(x_0)} \bigg|^\alpha \bigg]
			\leq e^{c_\alpha\|\widetilde{K}\|_{L^1}},\quad s\in[t,T].
		\end{align*}
		In particular, if \eqref{eq:apriori_estimate_of_rho(X)} holds for all $ N>0 $, where $ \widetilde{K}$ is independent of $N$, then
		\begin{align*}
			\E\bigg[\bigg| \frac{\rho(X_s)}{\rho(x_0)} \bigg|^\alpha \bigg]
			\leq e^{c_\alpha\|\widetilde{K}\|_{L^1}},\quad s\in[t,T].
		\end{align*}
	\end{lem}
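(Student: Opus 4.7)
The plan is to apply Itô's formula to $\log\rho(X_{s\wedge\tau_N})$ and then repackage the resulting decomposition as a Dol\'eans--Dade exponential times a pathwise-bounded factor, in direct parallel with the argument for Lemma~\ref{th:estimate_of_determinant_of_sol_of_SDE}. Since $\rho>0$ is $C^2$, It\^o's formula yields $\log\rho(X_s)-\log\rho(x_0)=A_s+N_s$, where
\[
A_s=\int_t^s\!\bigg(\frac{\langle\nabla\rho,b\rangle}{\rho}+\frac{\tr(\sigma\sigma^\top\nabla^2\rho)}{2\rho}-\frac{|\sigma^\top\nabla\rho|^2}{2\rho^2}\bigg)(r,X_r)\,dr
\]
and $N_s=\int_t^s\rho(X_r)^{-1}\langle\nabla\rho(X_r),\sigma(r,X_r)\,dW_r\rangle$ is a continuous local martingale with $d\langle N\rangle_r=\rho(X_r)^{-2}|\sigma^\top\nabla\rho|^2(r,X_r)\,dr$.

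The bookkeeping step is to bound, on $\{r\le\tau_N\}$, each of the three drift terms and the bracket density by multiples of $\widetilde K(r)$ using~\eqref{eq:apriori_estimate_of_rho(X)}. The first drift term is immediately $\le\widetilde K(r)$. For the second, I would invoke the elementary trace inequality $|\tr(\sigma\sigma^\top M)|\le\tr(\sigma\sigma^\top)|M|_{\mathrm{op}}=|\sigma|^2|M|_{\mathrm{op}}$ (valid since $\sigma\sigma^\top$ is positive semidefinite) with $M=\nabla^2\rho$, together with the third hypothesis, to get $\widetilde K(r)/2$. For the third drift term I would use $|\sigma^\top\nabla\rho|\le|\sigma||\nabla\rho|$ and the second hypothesis to obtain $\widetilde K(r)/2$; the same inequality bounds the bracket density by $\widetilde K(r)$. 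Summing gives the pathwise estimates $|A_{s\wedge\tau_N}|\le 2\|\widetilde K\|_{L^1}$ and $\langle N\rangle_{s\wedge\tau_N}\le\|\widetilde K\|_{L^1}$.

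The conclusion then follows from the identity
\[
\bigg|\frac{\rho(X_{s\wedge\tau_N})}{\rho(x_0)}\bigg|^\alpha=\mathcal E(\alpha N)_{s\wedge\tau_N}\exp\!\bigg(\alpha A_{s\wedge\tau_N}+\frac{\alpha^2}{2}\langle N\rangle_{s\wedge\tau_N}\bigg),
\]
the pathwise bound on the exponential factor by $e^{(2|\alpha|+\alpha^2/2)\|\widetilde K\|_{L^1}}=e^{c_\alpha\|\widetilde K\|_{L^1}}$, and $\E[\mathcal E(\alpha N)_{s\wedge\tau_N}]\le 1$ (a nonnegative supermartingale of unit initial value; in fact a true martingale by Novikov's criterion, since $\langle\alpha N\rangle^{\tau_N}$ is uniformly bounded). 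For the final assertion, when~\eqref{eq:apriori_estimate_of_rho(X)} holds for all $N$ with the same $\widetilde K$, the given solution $X$ is continuous on $[t,T]$, so $\tau_N\to T$ almost surely, and Fatou's lemma applied to the nonnegative quantities $|\rho(X_{s\wedge\tau_N})/\rho(x_0)|^\alpha$ transfers the bound to $X_s$ itself. The only delicate point is the trace inequality used for the second drift term; the rest is a routine adaptation of the exponential-martingale computation already employed in Lemma~\ref{th:estimate_of_determinant_of_sol_of_SDE}.
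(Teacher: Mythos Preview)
Your argument is correct and yields the stated constant $c_\alpha=\alpha^2/2+2|\alpha|$. The route differs from the paper's, however. The paper applies It\^o's formula directly to $f=\rho^\alpha$, bounds the drift of $f(X_{s\wedge\tau_N})$ by $c_\alpha\widetilde K(r)f(X_{r\wedge\tau_N})$, shows the stochastic integral is a true martingale, takes expectations, and closes with Gronwall's inequality. You instead work with $\log\rho$, obtain pathwise bounds $|A_{s\wedge\tau_N}|\le 2\|\widetilde K\|_{L^1}$ and $\langle N\rangle_{s\wedge\tau_N}\le\|\widetilde K\|_{L^1}$, and factor $(\rho(X_{s\wedge\tau_N})/\rho(x_0))^\alpha$ as a Dol\'eans--Dade exponential times a deterministically bounded factor. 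Your approach has the advantage of mirroring exactly the structure of the proof of Lemma~\ref{th:estimate_of_determinant_of_sol_of_SDE}, so the two Jacobian- and weight-type estimates used in Step~3 of the main proof become formally identical; it also avoids Gronwall entirely. The paper's approach is slightly more elementary in that it never needs the supermartingale property of $\mathcal E(\alpha N)$ and works directly with the expectation of $\rho^\alpha$. Both give the same constant, and the Fatou step for the final assertion is identical.
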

	
	\begin{proof}
		Let $ f(x):=\rho(x)^{\alpha} $. Then $\nabla f(x) =  \alpha\rho(x)^{\alpha-1}\nabla\rho(x)$ and
		\begin{align*}
			&\begin{aligned}
				|\nabla^2 f(x)|_{\mathrm{op}}
				&= \big|\alpha(\alpha-1)\rho(x)^{\alpha-2}\nabla\rho(x)\nabla\rho(x)^{\top} + \alpha\rho(x)^{\alpha-1}\nabla^2\rho(x)\big|_{\mathrm{op}}\\
				&\leq |\alpha|(|\alpha|+1)\rho(x)^{\alpha-2}|\nabla\rho(x)|^2 + |\alpha|\rho(x)^{\alpha-1}|\nabla^2\rho(x)|_{\mathrm{op}}.
			\end{aligned}
		\end{align*}
		Hence, by \eqref{eq:apriori_estimate_of_rho(X)}, for $ |x|\leq N $ we obtain
		\begin{align*}
			&|\langle b(r,x),\nabla f(x) \rangle|
			\leq|\alpha|\widetilde{K}(r)f(x),\quad
			|\sigma(r,x)||\nabla f(x)|
			\leq |\alpha|\widetilde{K}(r)^{1/2}f(x),\\
			&|\sigma(r,x)|^2|\nabla^2 f(x)|_{\mathrm{op}}
			\leq (\alpha^2 + 2|\alpha|)\widetilde{K}(r)f(x).
		\end{align*}
				
		By It\^o's formula,
		\begin{align}\label{eq:apriori_estimate_of_rho(X)2}
			\begin{aligned}
				f(X_{s\wedge \tau_N})
				&= f(x_0)+ \int_t^{s\wedge \tau_N} \langle\nabla f(X_r), b(r,X_r) \rangle\, dr
				+ \int_t^{s\wedge \tau_N} \langle\nabla f(X_r), \sigma(r,X_r)\,dW_r \rangle\\
				&\quad +\frac{1}{2}\int_t^{s\wedge \tau_N}\tr[\nabla^2f(X_r)(\sigma\sigma^\top)(r,X_r)]\,dr.
			\end{aligned}
		\end{align}
		Since $ |\tr [\nabla^2f(x)(\sigma\sigma^\top)(r,x)]|\leq |\sigma(r,x)|^2|\nabla^2f(x)|_{\mathrm{op}} $ and, on the event $ \{\tau_N>t\} $, one has $ |X_{r\wedge \tau_N}|\leq N $, it follows that
		\begin{align*}
			&\int_t^{s\wedge \tau_N} \langle\nabla f(X_r), b(r,X_r) \rangle\, dr
			+ \frac{1}{2}\int_t^{s\wedge \tau_N} \tr [\nabla^2f(X_r)\sigma\sigma^\top(r,X_r)]\,dr\\
			&\leq |\alpha|\int_t^{s\wedge \tau_N} \widetilde{K}(r)f(X_r)\,dr + \frac{1}{2}(\alpha^2 + 2|\alpha|)\int_t^{s\wedge \tau_N}\widetilde{K}(r)f(X_r)\,dr.
		\end{align*}
		Moreover,
		\begin{align*}
			\E\bigg[\int_t^{s\wedge \tau_N}  |\nabla f(X_r)|^2| \sigma(r,X_r)|^2\,dr   \bigg]
			\leq  \alpha^2\|\widetilde{K}\|_{L^1}\sup_{|x|\leq  N}|f(x)|^2 <\infty,
		\end{align*}
		so the process $ (\int_t^{s\wedge \tau_N}  \langle\nabla f(X_r), \sigma(r,X_r)\,dW_r\rangle)_{s\in[t,T]} $ is a true martingale. Therefore, noting that $ c_\alpha=\alpha^2/2 + 2|\alpha| $, from \eqref{eq:apriori_estimate_of_rho(X)2} we have
		\begin{align*}
			\E[f(X_{s\wedge \tau_N})]
			\leq f(x_0)+  c_\alpha \int_t^{s}   \widetilde{K}(r)\E[f(X_{r\wedge \tau_N})]\,dr. 
		\end{align*}
		Hence, dividing both sides by $ f(x_0) $ and applying Gronwall's lemma yields the first inequality.
		The second inequality follows from Fatou's lemma.
	\end{proof}
	
	\begin{rem}\label{rem:apriori_estimate_of_rho(X)}
		\begin{enumerate}
			\item If $ \rho_1 $ and $ \rho_2 $ satisfy \eqref{eq:apriori_estimate_of_rho(X)}, then the product $ \rho_1\rho_2 $ also satisfies \eqref{eq:apriori_estimate_of_rho(X)} with $ \widetilde{K} $ replaced by $ 4\widetilde{K} $.			
			\item Let $ \beta\in \R $, and define $ \rho(x):=(1+|x|^2)^{\beta} $. Then, under assumption~\ref{item:asummption_of_main_theorem_2}, the function $ \rho $ satisfies \eqref{eq:apriori_estimate_of_rho(X)} for any $ N>0 $ with $ \widetilde{K} := 16(\beta^2+ 2|\beta|)K $.
		\end{enumerate}
	\end{rem}	
	
	With the preliminary results in place, we now proceed to prove the main theorem.
	
	\begin{proof}[Proof of Theorem~\ref{th:main_theorem_locally_Lipschitz}]
		We derive \eqref{eq:norm_eq_result2} from \eqref{eq:norm_eq_result1} via Fubini's theorem, so it suffices to prove \eqref{eq:norm_eq_result1}.
		
		\textbf{Step 1.} We approximate the coefficients by suitable globally Lipschitz functions. For each $n=1,2,\dots$, define
		\[
		L_n(r):= \sup_{\substack{x\neq x'\\|x|,|x'|\leq n}}\frac{|\widehat{\sigma}(r,x)-\widehat{\sigma}(r,x')|}{|x-x'|},\quad r\in[0,T].
		\]
		By assumption~\ref{item:asummption_of_main_theorem_1}, $L_n$ is finite-valued and Borel measurable. Since $\mathrm{Leb}(\{L_n>\lambda\})\to 0$ as $\lambda\to\infty$, the Borel--Cantelli lemma yields a sequence $(\lambda_n)_{n\geq 1}\subset[0,\infty)$ such that
		\[
		\lim_{n\to\infty}\mathbf{1}_{\{L_n\leq \lambda_n\}}(r)=1,\quad\text{a.e.}\;r\in[0,T].
		\]
		Let $\chi \in C_c^\infty(\R^d)$ satisfy $0 \leq \chi \leq 1$, $\chi(x) = 1$ for $|x| \leq 1$, and $\chi(x) = 0$ for $|x| \geq 2$, and set $\chi_n(x) := \chi(x/n)$. We now define
		\begin{align*}
			&b^n(r,x):= b(r,x)\chi_n(x)^2\mathbf{1}_{\{L_n\leq \lambda_n\}}(r),\\
			&\sigma^n(r,x):= \sigma(r,x)\chi_n(x)\mathbf{1}_{\{L_n\leq \lambda_n\}}(r),\\
			&\widehat{\sigma}^n(r,x):= \widehat{\sigma}(r,x)\chi_n(x)^2\mathbf{1}_{\{L_n\leq \lambda_n\}}(r) + \sigma(r,x)\sigma(r,x)^\top\nabla\chi_n(x)\chi_n(x)\mathbf{1}_{\{L_n\leq \lambda_n\}}(r)
		\end{align*}
		and set $\widehat{b}^n := b^n - \widehat{\sigma}^n$.
		
		\textbf{Step 2.} We next verify that these coefficients satisfy the conditions needed for the subsequent arguments. 
		Define
		\begin{align*}
			&\gamma:= \sup_{x\in\R^d}(1+|x|)|\nabla\chi(x)| + \sup_{x\in\R^d}(1+|x|)^2|\nabla^2\chi(x)|,\\
			&\widetilde{K}_n(r):=4(1+\gamma+\gamma^2)(K(r) + \widetilde{K}(r)+ \widehat{K}_{2n}(r)).
		\end{align*}
		Then $\lim_{n\to \infty}\|\widetilde{K}_n\|_{L^1}/\log n =0$ by assumption~\ref{item:asummption_of_main_theorem_4}, and the following conditions \ref{item:main_theorem_locally_Lipschitz1}–\ref{item:main_theorem_locally_Lipschitz4} are satisfied for each $n$. (We confirm these in Appendix~\ref{section:appendix_to_proof_of_main_theorem}.)
		
		\begin{enumerate}[label=(\arabic*)]
			\item \label{item:main_theorem_locally_Lipschitz1}
			$ (b^n,\sigma^n) $ and $ (\widehat{b}^n,\sigma^n) $ satisfy Assumption~\ref{assum:globally_lipschitz_coefficients}. Moreover,
			\begin{align*}
				&|b^n(r,x)|\leq K(r)(1+|x|), \quad
				|\sigma^n(r,x)|\leq K(r)^{1/2}(1+|x|),\\
				&|\widehat{b}^n(r,x)| \leq \widetilde{K}_n(r)(1+|x|),\quad x\in\R^d,\;\; r\in[0,T].
			\end{align*}
			
			\item \label{item:main_theorem_locally_Lipschitz2}
			Let $B\subset \R^d$ be a Borel set and $\widetilde{K}_*\in L^1([0,T])$. We set $\widetilde{K}_* = \widetilde{K}$ when $B = \{x \in \R^d \mid|x| \leq n\}$, 
			and $\widetilde{K}_* = \widetilde{K}_n$ when $B = \R^d$. Then
			\begin{align}
				\begin{aligned}\label{eq:main_theorem_locally_Lipschitz02}
					&|\langle \widehat{b}^n(r,x),\nabla\rho(x) \rangle|\leq \widetilde{K}_*(r)\rho(x),\quad
					|\sigma^n(r,x)||\nabla\rho(x)|\leq \widetilde{K}_*(r)^{1/2}\rho(x),\\
					&|\sigma^n(r,x)|^2|\nabla^2\rho(x)|_{\mathrm{op}}\leq \widetilde{K}_*(r)\rho(x),\quad x\in B,\;\;r\in[0,T],
				\end{aligned}
			\end{align}
			and
			\begin{align}\label{eq:main_theorem_locally_Lipschitz03}
				\begin{aligned}
					&\bigg|-\tr \nabla \widehat{b}^n(r,x)-\frac{1}{2}\sum_{k=1}^{d'}\tr \big[(\nabla \sigma_{(k)}^n(r,x))^2\big]\bigg|\leq \widetilde{K}_*(r),\\
					&\sum_{k=1}^{d'}\big(\!\tr \nabla \sigma_{(k)}^n(r,x)\big)^2\leq \widetilde{K}_*(r),\quad\text{for a.e.}\;x\in B,\;\;\text{a.e.}\;r\in [0,T].
				\end{aligned}
			\end{align}
						
			\item \label{item:main_theorem_locally_Lipschitz4}
			For each $N=1,2,\dots$, there exists $K_N'\in L^1([0,T])$ independent of $n$ such that
			\begin{align*}
				&|b^n(r,x)-b^n(r,x')| \leq K_N'(r)|x-x'|,\\
				&|\sigma^n(r,x)-\sigma^n(r,x')| \leq K_N'(r)^{1/2}|x-x'|,\quad |x|,|x'|\leq N,\;\; r\in[0,T].
			\end{align*}
			Moreover, \eqref{eq:main_theorem_locally_Lipschitz02} also holds when $\widehat{b}^n$ is replaced by $b^n$, with $B = \R^d$ and $\widetilde{K}_* = \widetilde{K}$.
		\end{enumerate}
		
		\textbf{Step 3.}
		For each $t\in[0,T]$ and $x\in\R^d$, let $(X_s^{t,x,n})_{s\in[t,T]}$ denote the solution to the SDE
		\[
		X_s^{t,x,n} = x + \int_t^s b^n(r,X_r^{t,x,n})\,dr + \int_t^s \sigma^n(r,X_r^{t,x,n})\,dW_r,\quad s\in[t,T].
		\]
		We take a modification $(\widetilde{X}_s^{t,x,n})_{0\le t\le s\le T,\,x\in\R^d}$ 
		of $(X_s^{t,x,n})_{0\le t\le s\le T,\,x\in\R^d}$ such that, for almost every $\omega\in\Omega$, 
		the following properties hold:
		\begin{enumerate}
			  \item $(t,s,x)\mapsto \widetilde{X}_s^{t,x,n}(\omega)$ is continuous;
			\item for any $0\le t\le s\le T$, the map $x\mapsto \widetilde{X}_s^{t,x,n}(\omega)$ is a homeomorphism;
			\item for any $0\le t\le r\le s\le T$ and $x\in\R^d$, it holds that
			\[
			\widetilde{X}_s^{r,y,n}(\omega)\big|_{y=\widetilde{X}_r^{t,x,n}(\omega)}
			=\widetilde{X}_s^{t,x,n}(\omega).
			\]
			\end{enumerate}
		(See Section~\ref{section:inverse_flow}.)  
		This modification can be chosen so that properties (1)--(3) hold for all $\omega\in\Omega$, and we use the same notation $(X_s^{t,x,n})_{0\le t\le s\le T,\,x\in\R^d}$ hereafter.
		
		We show that there exist constants $C_n>0$ with $C_n\to0$ as $n\to\infty$ such that, for all $0\le t\le s\le T$ and $\varphi\in C_c(\R^d)$,
		\begin{align}\label{eq:main_theorem_locally_Lipschitz2}
			\begin{aligned}
				&C^{-1}\int_{\R^d} |\varphi(x)|\rho(x)\,dx 
				- C^{-2}C_n\int_{\R^d} |\varphi(x)|\rho(x)(1+|x|)\,dx\\
				&\leq \int_{\R^d} \E[|\varphi(X_s^{t,x,n})|]\rho(x)\,dx
				\leq C\int_{\R^d} |\varphi(x)|\rho(x)\,dx
				+ C_n\int_{\R^d} |\varphi(x)|\rho(x)(1+|x|)\,dx,
			\end{aligned}
		\end{align}
		where $C:=e^{5\|\widetilde{K}\|_{L^1}}$.
		
		From \eqref{eq:asummption_of_main_theorem_1} and the definitions of $\widehat{\sigma}^n$ and $\sigma^n$, we can check that the following relations hold for each $r\in[0,T]$:
		\[
		\widehat{\sigma}^n_i(r,x)
		= \sum_{\substack{1\le j\le d\\1\le k\le d'}}
		\sigma^n_{jk}(r,x)\partial_j\sigma^n_{ik}(r,x),
		\quad\text{a.e.}\;x,\;\; i=1,\dots,d.
		\]
		Hence, by Proposition~\ref{th:inverse_flow_SDE}, for fixed $s\in[0,T]$ and each $y\in\R^d$,
		\[
		\widehat{X}_s^{s-t,y,n}
		= y - \int_0^t \widehat{b}^n(s-r,\widehat{X}_s^{s-r,y,n})\,dr
		- \int_0^t \sigma^n(s-r,\widehat{X}_s^{s-r,y,n})\,d\widetilde{W}_r,\quad t\in[0,s],
		\]
		where $y \mapsto \widehat{X}_s^{s-r,y,n}$ denotes the inverse map of $y \mapsto X_s^{s-r,y,n}$ for each $r \in [0,s]$, and $\widetilde{W}_r:=W_s-W_{s-r}$.
		
		By Step~2~\ref{item:main_theorem_locally_Lipschitz1} and Lemma~\ref{th:differentiability_of_sol_of_SDE}, for any $p>d$ and each $t\in[0,s]$, the map $y\mapsto \widehat{X}_s^{s-t,y,n}$ belongs to $W^{1,p}_{\mathrm{loc}}(\R^d;\R^d)$ almost surely.
		Therefore, by Lemma~\ref{th:change_of_variables_formula} (the change-of-variables formula) and the remark following it, we have
		\begin{align}
			\int_{\R^d}\E[|\varphi(X_s^{s-t,x,n})|]\rho(x)\,dx
			&= \E\bigg[\int_{\R^d}|\varphi(y)|\rho(\widehat{X}_s^{s-t,y,n})
			|\det\nabla\widehat{X}_s^{s-t,y,n}|\,dy\bigg]\notag\\
			&= \int_{\R^d}|\varphi(y)|\rho(y)
			\E\bigg[\frac{\rho(Y_s^{t,y,n})}{\rho(y)}
			|\det\widetilde{\nabla Y}{}_s^{t,y,n}|\bigg]dy,
			\label{eq:main_theorem_locally_Lipschitz3}
		\end{align}
		where $Y_s^{t,y,n}:=\widehat{X}_s^{s-t,y,n}$, and 
		$(\widetilde{\nabla Y}{}_s^{t,y,n})_{t\in[0,s],\,y\in\R^d}$ denotes the measurable weak derivative of $(Y_s^{t,y,n})_{t\in[0,s],\,y\in\R^d}$ (see Section~\ref{section:weak_diffrentability_of_sol_of_SDE}).
				
		Define
		\[
		\tau_n^y:=\inf\{r\in[0,s]\mid |Y_s^{r,y,n}|\ge n\}\wedge s.
		\]
		Let $\alpha \in \R$ be arbitrary. (Although $\alpha$ will be fixed to $\pm 1$ in the final estimate, we leave it unspecified for now, as multiples such as $2\alpha$ and $3\alpha$ will appear in the intermediate bounds.) Then, interpreting $0^\alpha := \infty$ when $\alpha < 0$, we have 
		\begin{align*}
			&\E\bigg[\bigg(\frac{\rho(Y_s^{t,y,n})}{\rho(y)}
			|\det\widetilde{\nabla Y}{}_s^{t,y,n}|\bigg)^\alpha\bigg]\\
			&=\E\bigg[\bigg(\frac{\rho(Y_s^{t,y,n})}{\rho(y)}
			|\det\widetilde{\nabla Y}{}_s^{t,y,n}|\bigg)^\alpha
			\mathbf{1}_{\{\tau_n^y\ge t\}}\bigg]
			+\E\bigg[\bigg(\frac{\rho(Y_s^{t,y,n})}{\rho(y)}
			|\det\widetilde{\nabla Y}{}_s^{t,y,n}|\bigg)^\alpha
			\mathbf{1}_{\{\tau_n^y<t\}}\bigg]\\
			&=: (\mathrm{I})+(\mathrm{II}).
		\end{align*}
		By Step~2~\ref{item:main_theorem_locally_Lipschitz2} and Lemma~\ref{th:apriori_estimate_of_rho(X)}, setting $c_\alpha:=\alpha^2/2+2|\alpha|$ and noting $\|\widetilde{K}\|_{L^1([0,s])}\le \|\widetilde{K}\|_{L^1}$, we obtain
		\[
		\E\bigg[\bigg|\frac{\rho(Y_s^{t\wedge\tau_n^y,y,n})}{\rho(y)}\bigg|^\alpha\bigg]
		\le e^{c_\alpha\|\widetilde{K}\|_{L^1}},\quad
		\E\bigg[\bigg|\frac{\rho(Y_s^{t,y,n})}{\rho(y)}\bigg|^\alpha\bigg]
		\le e^{c_\alpha\|\widetilde{K}_n\|_{L^1}}.
		\]
		On the other hand, by Step~2~\ref{item:main_theorem_locally_Lipschitz1}, 
		Lemma~\ref{th:differentiability_of_sol_of_SDE}, and Lemma~\ref{th:regularity_of_cont_Sobolev_function}~(2),
		for each $t\in[0,s]$, the map $y\mapsto X_s^{s-t,y,n}$ sends null sets to null sets almost surely, so its inverse $y\mapsto Y_s^{t,y,n}$ pulls back null sets to null sets almost surely.
		Hence, by Step~2~\ref{item:main_theorem_locally_Lipschitz2} and Lemma~\ref{th:estimate_of_determinant_of_sol_of_SDE},
		setting $c_\alpha':=\alpha^2/2+|\alpha|$, we have
		\[
		\E[|\det\widetilde{\nabla Y}{}_s^{t,y,n}|^\alpha\mathbf{1}_{\{\tau_n^y\ge t\}}]
		\le e^{c_\alpha'\|\widetilde{K}\|_{L^1}},\quad
		\E[|\det\widetilde{\nabla Y}{}_s^{t,y,n}|^\alpha]
		\le e^{c_\alpha'\|\widetilde{K}_n\|_{L^1}},\quad \text{a.e.}\;y.
		\]
		
		Now take $\alpha=\pm1$. Since $c_{2}=6$ and $c_{2}'=4$,
		\begin{align*}
			(\mathrm{I})
			&\le \E\bigg[\bigg|
			\frac{\rho(Y_s^{t\wedge\tau_n^y,y,n})}{\rho(y)}\bigg|^{2\alpha}\bigg]^{1/2}
			\E[|\det\widetilde{\nabla Y}{}_s^{t,y,n}|^{2\alpha}
			\mathbf{1}_{\{\tau_n^y\ge t\}}]^{1/2}\\
			&\le \big(e^{c_{2}\|\widetilde{K}\|_{L^1}}
			e^{c_{2}'\|\widetilde{K}\|_{L^1}}\big)^{1/2}
			= e^{5\|\widetilde{K}\|_{L^1}}=C,\quad \text{a.e.}\;y.
		\end{align*}
		Next, we estimate term $(\mathrm{II})$. By Step~2~\ref{item:main_theorem_locally_Lipschitz1} and Lemma~\ref{th:SDE_apriori1}, there exists a constant $c''>0$ independent of $n$ such that
		\[
		\Prob(\tau_n^y<t)
		\le \Prob\bigg(\sup_{r\in[0,t]}|Y_s^{r,y,n}|\ge n\bigg)
		\le \frac{1}{n^3}\E\bigg[\sup_{r\in[0,s]}|Y_s^{r,y,n}|^3\bigg]
		\le \frac{1}{n^3}c''e^{c''\|\widetilde{K}_n\|_{L^1}}(1+|y|^3).
		\]
		Hence
		\[
		(\mathrm{II})
		\le \E\bigg[\bigg|\frac{\rho(Y_s^{t,y,n})}{\rho(y)}\bigg|^{3\alpha}\bigg]^{1/3}
		\E[|\det\widetilde{\nabla Y}{}_s^{t,y,n}|^{3\alpha}]^{1/3}
		\Prob(\tau_n^y<t)^{1/3}
		\le C_n(1+|y|),\quad \text{a.e.}\;y,
		\]
		where
		\[
		C_n:=n^{-1}\big(2c''e^{(c_3+c_3'+c'')\|\widetilde{K}_n\|_{L^1}}\big)^{1/3}.
		\]
		Since $\|\widetilde{K}_n\|_{L^1}/\log n \to 0$ as $n \to \infty$, we have $C_n \to 0$. Combining the bounds for (I) and (II), we obtain, for $\alpha = \pm 1$,
		\begin{align}\label{eq:main_theorem_locally_Lipschitz4}
			\E\bigg[\bigg(\frac{\rho(Y_s^{t,y,n})}{\rho(y)}
			|\det\widetilde{\nabla Y}{}_s^{t,y,n}|\bigg)^\alpha\bigg]
			\le C + C_n(1+|y|),\quad \text{a.e.}\;y.		
		\end{align}
		
		Observe that, for any positive random variable $Z$ with $\E[Z^{-1}]<\infty$, we have $\E[Z]\ge \E[Z^{-1}]^{-1}$, and for $a,\beta>0$, the inequality $(a+\beta)^{-1}\ge a^{-1}-a^{-2}\beta$ holds.
		Then, applying the case $\alpha = -1$ in \eqref{eq:main_theorem_locally_Lipschitz4}, we have
		\begin{align*}
			\E\bigg[\frac{\rho(Y_s^{t,y,n})}{\rho(y)}
			|\det\widetilde{\nabla Y}{}_s^{t,y,n}|\bigg]
			&\ge \E\bigg[\bigg(
			\frac{\rho(Y_s^{t,y,n})}{\rho(y)}
			|\det\widetilde{\nabla Y}{}_s^{t,y,n}|\bigg)^{-1}\bigg]^{-1}\\
			&\ge (C + C_n(1+|y|))^{-1}\\
			&\ge C^{-1} - C^{-2}C_n(1+|y|),\quad \text{a.e.}\;y.
		\end{align*}
		Finally, by substituting the above lower bound and \eqref{eq:main_theorem_locally_Lipschitz4} (with $\alpha = 1$) into \eqref{eq:main_theorem_locally_Lipschitz3} and replacing $t$ by $s - t$, we obtain \eqref{eq:main_theorem_locally_Lipschitz2}.
		
		\textbf{Step 4.}
		We now take the limit as $n\to\infty$ in \eqref{eq:main_theorem_locally_Lipschitz2}, where we continue to assume $\varphi\in C_c(\R^d)$.
		First, by Step~2~\ref{item:main_theorem_locally_Lipschitz1}, \ref{item:main_theorem_locally_Lipschitz4}, and the stability of SDEs (Lemma~\ref{th:SDE_stability}), we have $X_s^{t,x,n} \to X_s^{t,x}$ in probability for each $x\in\R^d$. Since $\varphi$ is bounded and continuous, it follows that
		\begin{align*}
			\E[| \varphi(X_s^{t,x,n})|]\to \E[ |\varphi(X_s^{t,x})| ]\quad(n\to \infty).
		\end{align*}
		To proceed, define $\rho_0(x):=(1+|x|^2)^{d}$.
		Then by Step~2~\ref{item:main_theorem_locally_Lipschitz4}, Lemma~\ref{th:apriori_estimate_of_rho(X)} and Remark~\ref{rem:apriori_estimate_of_rho(X)}, there exists a constant $C_0>0$ independent of $n$ such that
		\begin{align*}
			\E[|(\rho_0(X_s^{t,x,n})\rho(X_s^{t,x,n}))^{-1}|]
			\leq C_0(\rho_0(x)\rho(x))^{-1}.
		\end{align*}
		Hence,
		\begin{align*}
			\E[| \varphi(X_s^{t,x,n})|]\rho(x)
			&\leq  \|\varphi\rho_0\rho\|_\infty \E[|(\rho_0(X_s^{t,x,n})\rho(X_s^{t,x,n}))^{-1}|]\rho(x)\\
			&\leq \|\varphi\rho_0\rho\|_\infty C_0(\rho_0(x))^{-1}.
		\end{align*}
		Since the right-hand side is integrable in $x$ and independent of $n$, the dominated convergence theorem applies.
		Therefore, letting $n \to \infty$ in \eqref{eq:main_theorem_locally_Lipschitz2}, we obtain
		\begin{align}\label{eq:main_theorem_locally_Lipschitz01}
			C^{-1}\int_{\R^d} |\varphi(x)|\rho(x)\,dx \leq\int_{\R^d} \E[|\varphi(X^{t,x}_s)|]\rho(x)\,dx \leq C \int_{\R^d} |\varphi(x)|\rho(x)\,dx.
		\end{align}
		
		\textbf{Step 5.}
		Finally, we show that \eqref{eq:main_theorem_locally_Lipschitz01} also holds for general $\varphi$. Consider first the case where $\varphi$ is the indicator function of a bounded open set. 
		Since such functions can be approximated from below by a monotone increasing sequence of nonnegative continuous functions with compact support, \eqref{eq:main_theorem_locally_Lipschitz01} follows by the monotone convergence theorem.
		In particular, for any fixed $N>0$ and any open set $B\subset \R^d$, the inequality \eqref{eq:main_theorem_locally_Lipschitz01} holds for $\varphi=\mathbf{1}_{B\cap (-N,N)^d}$. Then, by the monotone class theorem, the same inequality extends to all $B\in\mathscr B(\R^d)$.
		Therefore, applying the monotone convergence theorem and standard approximation by simple functions, we conclude that \eqref{eq:main_theorem_locally_Lipschitz01} holds for any Borel function $\varphi$.
	\end{proof}
	
	We finally prove Corollary~\ref{th:cor_of_main_theorem} of the main theorem.
	
	\begin{proof}[Proof of Corollary~\ref{th:cor_of_main_theorem}]
		By \eqref{eq:cor_of_main_theorem02}, we have for any $r\in[0,T]$,
		\begin{align*}
			|\nabla b(r,x)|\le \sqrt{d}K(r),\quad
			|\nabla \sigma(r,x)|\le \sqrt{d}K(r)^{1/2},\quad
			|\nabla \widehat{\sigma}(r,x)|\le \sqrt{d}K(r),\quad \text{a.e.}\;x.
		\end{align*}
		Noting $|\tr A|\le \sqrt{d}|A|$ and $|\tr(A^2)|\le |A|^2$ for any matrix $A\in\R^{d\times d}$, we obtain, for every $r\in[0,T]$ and almost every $x\in\R^d$,
		\begin{align}\label{eq:cor_of_main_theorem2}
			&\bigg|\tr(\nabla b(r,x) - \nabla\widehat{\sigma}(r,x))
			+ \frac{1}{2}\sum_{k=1}^{d'} \tr\big[(\nabla\sigma_{(k)}(r,x))^2\big]\bigg|
			+ \sum_{k=1}^{d'}\big(\!\tr\nabla\sigma_{(k)}(r,x)\big)^2
			\le \Big(\frac{5}{2}d + d^2\Big)K(r).
		\end{align}		
		On the other hand, by \eqref{eq:asummption_of_main_theorem_1}, we have for any $y\in\R^d$,
		\begin{align*}
			|\langle \widehat{\sigma}(r,x),y\rangle|
			\le |\sigma(r,x)||\nabla\sigma(r,x)||y|
			\le \sqrt{d}K(r)^{1/2}|\sigma(r,x)||y|,\quad \text{a.e.}\;x.
		\end{align*}
		Together with the continuity of $x\mapsto \widehat{\sigma}(r,x)$ and $x\mapsto\sigma(r,x)$, this yields
		\begin{align}\label{eq:cor_of_main_theorem1}
			|\widehat{\sigma}(r,x)|\le \sqrt{d}K(r)^{1/2}|\sigma(r,x)|,\quad x\in\R^d.
		\end{align}
		In particular, $|\widehat{\sigma}(r,x)|\le \sqrt{d}K(r)(1+|x|)$, so that assumption~\ref{item:asummption_of_main_theorem_4} is satisfied.
		
		\medskip
		(1)
		Combining Remark~\ref{rem:apriori_estimate_of_rho(X)}~(2) with
		\eqref{eq:cor_of_main_theorem1}, and further taking into account \eqref{eq:cor_of_main_theorem2}, we see that
		\[
		\widetilde{K}(r)
		:= \Big(16(\beta^2 + 2|\beta|) + 4\sqrt{d}(\beta^2 + 2|\beta|)^{1/2}\Big)K(r)
		+ \Big(\frac{5}{2}d + d^2\Big)K(r)
		\]
		satisfies \eqref{eq:main_theorem_locally_Lipschitz'} and \eqref{eq:main_theorem_locally_Lipschitz}.
		Hence, Theorem~\ref{th:main_theorem_locally_Lipschitz} applies.
		
		\medskip
		(2)
		We first consider the case where $F\in C^2(\R^d)$ and all its partial derivatives are bounded.
		Since $\rho(x) = e^{F(x)}$ by assumption, we have
		\begin{align*}
			|\nabla\rho(x)| \le \|\nabla F\|_\infty\rho(x),\quad
			|\nabla^2\rho(x)|_{\mathrm{op}}
			\le (\|\nabla F\|_\infty^2 + \|\nabla^2F\|_\infty)\rho(x).
		\end{align*}
		Then, combining assumption \eqref{eq:cor_of_main_theorem01} with the inequality \eqref{eq:cor_of_main_theorem1}, and further taking into account \eqref{eq:cor_of_main_theorem2}, we may set
		\[
		\widetilde{K}(r)
		:= \big((1+\sqrt{d})\|\nabla F\|_\infty
		+ \|\nabla F\|_\infty^2
		+ \|\nabla^2 F\|_\infty\big)K(r)
		+ \Big(\frac{5}{2}d + d^2\Big)K(r),
		\]
		so that \eqref{eq:main_theorem_locally_Lipschitz'} and \eqref{eq:main_theorem_locally_Lipschitz} hold. Therefore,
		Theorem~\ref{th:main_theorem_locally_Lipschitz} is applicable in this case.
		
		We now turn to a general $F$ satisfying the assumption of the corollary. Let $\chi\in C_c^\infty(\R^d)$ satisfy $0\le\chi\le1$, $\chi(x)=1$ for $|x|\le2$, and $\chi(x)=0$ for $|x|\ge3$,
		and define $\widetilde{F}(x):=F(x)(1-\chi(x/R_0))$. Then $\widetilde{F}$ is a $C^2$ function with bounded derivatives on $\R^d$. 
		Therefore, applying the preceding argument to the weight $e^{\widetilde{F}}$,
		we obtain constants $\widetilde{c},\widetilde{C}>0$ such that the estimates \eqref{eq:norm_eq_result1} and \eqref{eq:norm_eq_result2} holds with $\rho$ replaced by $e^{\widetilde{F}}$
		and with $c,C$ replaced by $\widetilde{c},\widetilde{C}$.
		Let $c_0 := \sup_{|x|\le3R_0}|F(x)|$. Then $ e^{-c_0}e^{F(x)}\le e^{\widetilde{F}(x)}\le e^{c_0}e^{F(x)} $ for all $ x\in \R^d $,
		so \eqref{eq:norm_eq_result1} and \eqref{eq:norm_eq_result2} for $\rho=e^{F}$ follows by taking $c:=\widetilde{c}e^{-2c_0}$ and $C:=\widetilde{C}e^{2c_0}$.
	\end{proof}
	
	\begin{rem}\label{rem:generalization_of_cor_2}
		As mentioned in Section~\ref{section:Main_theorem}, Corollary~\ref{th:cor_of_main_theorem} can be extended to the generalized equivalence of norm principle in \cite{ZhangZhao2007}, in which the functions $\varphi$ and $\psi$ are random.
		Indeed, under the assumptions of Corollary~\ref{th:cor_of_main_theorem}, the results of Sections~\ref{section:inverse_flow_main} and~\ref{section:weak_diffrentability_of_sol_of_SDE} apply directly,
		so that the approximation procedure used in the proof of Theorem~\ref{th:main_theorem_locally_Lipschitz} is no longer needed (see also the proof of Corollary~\ref{th:cor_of_main_theorem}). One can then follow the same argument as in \cite[Lemma~2.6]{ZhangZhao2007}.
	\end{rem}
	
	\section{Application}\label{section:application}
	As an application of the norm equivalence result (Theorem~\ref{th:main_theorem_locally_Lipschitz}), we derive integrability and non-integrability properties of solutions to certain PDEs via probabilistic methods.
	
	\begin{ex}
		Let $h:[0,T]\times\R^d\to\R^m$ and $f:[0,T]\times\R^d\times\R^m\times\R^{m\times d'}\to\R^m$ be Borel measurable functions.
		Under the assumptions of Theorem~\ref{th:main_theorem_locally_Lipschitz}, suppose that $u\in C^{1,2}([0,T]\times\R^d;\R^m)$ is a classical solution to the following parabolic PDE:
		\begin{align*}
			\left\{\begin{aligned}
				&\partial_t u(t,x) + \mathscr L u(t,x) + f(t,x,u(t,x),(\nabla u \sigma)(t,x)) = 0, \quad t \in (0,T),\\
				&u(T,x) = h(x),
			\end{aligned}
			\right.
		\end{align*}
		where $\mathscr L u := \frac{1}{2}\sum_{i,j}(\sigma\sigma^\top)_{ij}\partial_{i}\partial_{j}u + \sum_i b_i\partial_i u$, with $b$ and $\sigma$ as in Theorem~\ref{th:main_theorem_locally_Lipschitz}.
		Assume further that there exist a constant $L > 0$ and a Borel function $f_0:[0,T]\times\R^d\to\R$ such that
		\begin{align*}
			\langle y, f(t,x,y,z)\rangle
			\leq |f_0(t,x)||y| + L|y|^2 + L|z||y|,\quad (t,x,y,z)\in [0,T]\times\R^d\times\R^m\times\R^{m\times d'}.
		\end{align*}
		Suppose that $h(x)$ and $f_0(t,x)$ are of polynomial growth in $x$, uniformly in $t\in[0,T]$. If $x\mapsto u(t,x)$ also has polynomial growth uniformly in $t\in[0,T]$, then the following holds:		
		\begin{enumerate}
			\item For every $p>1$, there exists a constant $C_p>0$, depending only on $p$, $L$, $T$, and $\|\widetilde{K}\|_{L^1}$, such that, for every $t\in[0,T]$,
			\begin{align*}
				&\int_{\R^d}|u(t,x)|^p\rho(x)\,dx + \int_t^T\int_{\R^d}|(\nabla u\sigma)(s,x)|^2\rho(x)\,dx\,ds\,\mathbf{1}_{\{p=2\}}(p)\\
				&\leq C_p\int_{\R^d}|h(x)|^p\rho(x)\,dx + C_p\bigg(\int_t^T\bigg(\int_{\R^d}|f_0(s,x)|^p\rho(x)\,dx \bigg)^{1/p}\,ds\bigg)^p.
			\end{align*}			
			\item Suppose that $f(t,x,y,z)$ does not depend on $y$ and $z$ (so we may write $f(t,x)$), that $m=1$, and that $h$ and $f$ are nonnegative. Then $u$ is also nonnegative, and there exists a constant $C > 0$ such that, for every $t\in[0,T]$,
			\begin{align*}
				\int_{\R^d}u(t,x)\rho(x)\,dx
				\geq C\bigg( \int_{\R^d}h(x)\rho(x)\,dx  + \int_t^T\int_{\R^d}f(s,x)\rho(x)\,dx \,ds\bigg).
			\end{align*}
			In particular, if the right-hand side is infinite for some $t\in[0,T]$, then $\int_{\R^d}u(t,x)\rho(x)\,dx = \infty$ for such $t$.
		\end{enumerate}
	\end{ex}
	
	\begin{proof}
		Set $Y_s^{t,x} := u(s,X_s^{t,x})$, $Z_s^{t,x} := (\nabla u \sigma)(s,X_s^{t,x})$. Applying Itô's formula yields
		\begin{align}\label{eq:application_of_main_theorem2}
			Y_s^{t,x}
			= h(X_T^{t,x}) + \int_s^T f(r,X_r^{t,x},Y_r^{t,x},Z_r^{t,x})\,dr - \int_s^T Z_r^{t,x}\,dW_r,\quad s\in[t,T].
		\end{align}
		By the polynomial growth assumption on $u$ and Lemma~\ref{th:SDE_apriori1}, the standard a priori estimate for BSDEs (see \cite{Briand_etal.2003}) applies, and for every $p>1$ there exists a constant $c_{p,L,T}>0$ such that
		\begin{align}\label{eq:application_of_main_theorem}
			\E\bigg[\sup_{s\in[t,T]}|Y_s^{t,x}|^p + \bigg(\int_t^T|Z_r^{t,x}|^2\,dr \bigg)^{p/2} \bigg]
			\leq c_{p,L,T}\E\bigg[|h(X_T^{t,x})|^p + \bigg(\int_t^T|f_0(r,X_r^{t,x})|\,dr \bigg)^p \bigg].
		\end{align}
		
		(1)  
		By Theorem~\ref{th:main_theorem_locally_Lipschitz},
		\begin{align*}
			&\int_{\R^d}|u(t,x)|^p\rho(x)\,dx + \int_t^T\int_{\R^d}|(\nabla u\sigma)(s,x)|^2\rho(x)\,dx\,ds\,\mathbf{1}_{\{p=2\}}(p)\\
			&\leq e^{5\|\widetilde{K}\|_{L^1}}\bigg(\int_{\R^d}\E[|u(t,X_t^{t,x})|^p]\rho(x)\,dx + \int_t^T\int_{\R^d}\E[|(\nabla u\sigma)(s,X_s^{t,x})|^2]\rho(x)\,dx\,ds\,\mathbf{1}_{\{p=2\}}(p)  \bigg)\\
			&= e^{5\|\widetilde{K}\|_{L^1}}\int_{\R^d}\E\bigg[|Y_t^{t,x}|^p + \bigg(\int_t^T|Z_r^{t,x}|^2\,dr \bigg)^{p/2}\mathbf{1}_{\{p=2\}}(p)  \bigg] \rho(x)\,dx.
		\end{align*}
		By \eqref{eq:application_of_main_theorem} and Minkowski's integral inequality, this is bounded above by
		\begin{align*}
			&c_{p,L,T}e^{5\|\widetilde{K}\|_{L^1}}\bigg(\int_{\R^d}\E[|h(X_T^{t,x})|^p]\rho(x)\,dx  + \int_{\R^d}\E\bigg[ \bigg(\int_t^T|f_0(r,X_r^{t,x})|\,dr \bigg)^p \bigg]\rho(x)\,dx\bigg)\\
			&\leq c_{p,L,T}e^{5\|\widetilde{K}\|_{L^1}}\bigg(\int_{\R^d}\E[|h(X_T^{t,x})|^p]\rho(x)\,dx  +  \bigg(\int_t^T\bigg(\int_{\R^d}\E[|f_0(r,X_r^{t,x})|^p]\rho(x)\,dx\bigg)^{1/p} \,dr \bigg)^p\bigg).
		\end{align*}
		Therefore, applying Theorem~\ref{th:main_theorem_locally_Lipschitz} again, we obtain
		\begin{align*}
			&\int_{\R^d}|u(t,x)|^p\rho(x)\,dx + \int_t^T\int_{\R^d}|(\nabla u\sigma)(s,x)|^2\rho(x)\,dx\,ds\,\mathbf{1}_{\{p=2\}}(p)\\
			&\leq c_{p,L,T}e^{10\|\widetilde{K}\|_{L^1}}\bigg(\!\int_{\R^d}|h(x)|^p\rho(x)\,dx  +  \bigg(\int_t^T\bigg(\!\int_{\R^d}|f_0(r,x)|^p\rho(x)\,dx\bigg)^{1/p} \,dr \bigg)^p\bigg),
		\end{align*}
		as desired.
		
		(2)  
		By the polynomial growth assumptions on $h$ and $f_0$ and Lemma~\ref{th:SDE_apriori1}, the right-hand side of \eqref{eq:application_of_main_theorem} is finite.
		Hence, $(\int_t^s Z_r^{t,x}\,dW_r)_{s\in[t,T]}$ is a true martingale. Taking the expectation of both sides of \eqref{eq:application_of_main_theorem2} at $s=t$ yields
		\begin{align*}
			u(t,x)
			= \E[Y_t^{t,x}]
			= \E\bigg[h(X_T^{t,x}) + \int_t^T f(r,X_r^{t,x})\,dr \bigg].
		\end{align*}
		In particular, $u$ is nonnegative.
		Multiplying both sides of this equality by $\rho(x)$, integrating over $\R^d$, and applying
		Theorem~\ref{th:main_theorem_locally_Lipschitz} yields the desired estimate. 
	\end{proof}
	
	\appendix
	\renewcommand{\thesection}{\Alph{section}} 

	\section{A priori estimates and stability for SDEs}
	In this section, fix $ 0\leq t_0\leq T $.  
	Let $ \xi:\Omega\to\R^d $ be an $ \mathscr F_{t_0} $-measurable random variable,
	and let $ b:\Omega\times[t_0,T]\times\R^d\to\R^d $ and $ \sigma:\Omega\times[t_0,T]\times\R^d\to\R^{d\times d'} $  
	be $(\mathscr F_t)$-progressively measurable.
	We begin with the following basic a priori estimate.
	
	\begin{lem}\label{th:SDE_apriori1}
		Suppose there exist a nonnegative function $ K\in L^1([0,T]) $ and nonnegative progressively measurable processes  
		$ (b_0(r))_{r\in[t_0,T]} $ and $ (\sigma_0(r))_{r\in[t_0,T]} $ such that
		\begin{align*}
			|b(r,x)|\leq b_0(r)+K(r)|x|,\quad
			|\sigma(r,x)|\leq \sigma_0(r)+K(r)^{1/2}|x|,\quad x\in\R^d,\;\;r\in[t_0,T].
		\end{align*}
		Let $ (X_s)_{s\in[t_0,T]} $ be the solution to the SDE
		\begin{equation}\label{eq:SDE_apriori01}
			X_s = \xi + \int_{t_0}^s b(r,X_r)\,dr + \int_{t_0}^s \sigma(r,X_r)\,dW_r,\quad s\in[t_0,T].
		\end{equation}
		Then for every $ p\geq 2 $, there exists a constant $ C_p>0 $, depending only on $p$, such that
		\begin{align}\label{eq:SDE_apriori1}
			\E\bigg[\sup_{s\in[t_0,T]}|X_s|^p\bigg]
			\leq C_pe^{C_p\|K\|_{L^1}}
			\E\bigg[|\xi|^p+\bigg(\int_{t_0}^T|b_0(s)|\,ds\bigg)^p
			+\bigg(\int_{t_0}^T|\sigma_0(s)|^2\,ds\bigg)^{p/2}\bigg].
		\end{align}
		In particular, if $ b_0(r)\leq K(r) $ and $ \sigma_0(r)\leq K(r)^{1/2} $, then there exists another constant $ C_p'>0 $, depending only on $p$, such that
		\begin{align}\label{eq:SDE_apriori2}
			\E\bigg[\sup_{s\in[t_0,T]}|X_s|^p\bigg]
			\leq C_p'e^{C_p'\|K\|_{L^1}}\big(1+\E[|\xi|^p]\big).
		\end{align}
	\end{lem}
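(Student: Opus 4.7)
The plan is to derive a Gronwall-type estimate directly on the $L^p(\Omega)$-norm $u_N(s):=\bigl\|\sup_{r\in[t_0,s]}|X_{r\wedge\tau_N}|\bigr\|_{L^p(\Omega)}$ of the localized process and then iterate over a partition of $[t_0,T]$ whose mesh is tuned to $\|K\|_{L^1}$, rather than applying a single Gronwall step to $\E[\sup|X|^p]$; this is the key move that keeps the final exponent linear in $\|K\|_{L^1}$.

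After introducing the localizing stopping time $\tau_N:=\inf\{s\in[t_0,T]:|X_s|\geq N\}\wedge T$, so that path continuity gives $|X_{\cdot\wedge\tau_N}|\leq N$ and the martingale part of \eqref{eq:SDE_apriori01} is a true martingale up to $\tau_N$, I would combine the linear growth bounds, the triangle inequality in $L^p(\Omega)$, Minkowski's integral inequality (in $L^{p/2}(\Omega)$ for the diffusion contribution), and the BDG inequality applied to the stochastic integral. On a single sub-interval $[s_i,s_{i+1}]$ carrying $\int_{s_i}^{s_{i+1}}K\leq\delta_p$ for a sufficiently small threshold $\delta_p$ depending only on $p$, these steps lead, after absorbing the stochastic-integral term into the left-hand side, to the estimate
\[
u_N(s_{i+1})\leq M_p\bigl(u_N(s_i)+\beta_i+\gamma_i\bigr),\qquad \beta_i:=\bigl\|\textstyle\int_{s_i}^{s_{i+1}}b_0\bigr\|_{L^p(\Omega)},\quad \gamma_i:=\bigl\|\textstyle\int_{s_i}^{s_{i+1}}\sigma_0^{2}\bigr\|_{L^{p/2}(\Omega)}^{1/2},
\]
for a constant $M_p>1$ depending only on $p$.

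Partitioning $[t_0,T]$ into $n=\lceil\|K\|_{L^1}/\delta_p\rceil+1$ such sub-intervals and iterating gives $u_N(T)\leq M_p^n\|\xi\|_{L^p(\Omega)}+\sum_{i=0}^{n-1}M_p^{n-i}(\beta_i+\gamma_i)$; the super-additivity $\sum_i a_i^p\leq(\sum_i a_i)^p$ for $a_i\geq 0,\ p\geq 1$ combined with a Hölder step across the sum controls $\sum M_p^{n-i}\beta_i$ by $C_pM_p^n\|\int_{t_0}^T b_0\|_{L^p(\Omega)}$, and analogously for $\sum M_p^{n-i}\gamma_i$. Since $M_p^n\leq e^{C_p\|K\|_{L^1}}$ by the choice of $n$, raising the resulting estimate to the $p$-th power and applying $(a+b+c)^p\leq 3^{p-1}(a^p+b^p+c^p)$ produces the bound for the stopped process; Fatou's lemma together with $\tau_N\uparrow T$ a.s.\ (a consequence of path continuity) then transfers it to $X$ itself, yielding \eqref{eq:SDE_apriori1}. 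The second assertion follows by plugging $b_0\leq K$, $\sigma_0\leq K^{1/2}$ into \eqref{eq:SDE_apriori1} and using $x^q\leq C_q e^x$ on $[0,\infty)$ to absorb the resulting factors $\|K\|_{L^1}^p+\|K\|_{L^1}^{p/2}$ into the exponential.

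The main obstacle is precisely keeping the exponent linear in $\|K\|_{L^1}$ with a constant depending only on $p$: a direct Gronwall on $\E[\sup|X|^p]$ with Jensen applied to the measure $K(r)\,dr$ only yields an exponent of order $\|K\|_{L^1}^p+\|K\|_{L^1}^{p/2}$, so both the $L^p(\Omega)$-norm reformulation (to expose a linear-in-$u_N$ Gronwall integrand) and the partitioning/iteration step (to allow uniform absorption of the stochastic-integral term) seem necessary. A secondary bookkeeping issue is the estimate of the accumulated data terms $\sum M_p^{n-i}(\beta_i+\gamma_i)$ across the $n\asymp\|K\|_{L^1}$ sub-intervals, for which the super-additivity inequality above is the clean way to relate the sum of sub-interval $L^p$-norms back to the global $L^p$-norms appearing on the right-hand side of \eqref{eq:SDE_apriori1}.
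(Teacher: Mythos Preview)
Your approach is correct and would yield \eqref{eq:SDE_apriori1} with the stated dependence on $\|K\|_{L^1}$; the localization, the $L^p(\Omega)$-norm single-step estimate, the absorption on small-$K$-mass sub-intervals, and the iteration all go through as you describe, and the polynomial-in-$n$ factors arising from the H\"older step on $\sum_i M_p^{n-i}(\beta_i+\gamma_i)$ are harmlessly absorbed into the exponential.

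The paper, however, does not give a self-contained argument here: it simply observes that \eqref{eq:SDE_apriori2} follows from \eqref{eq:SDE_apriori1} by adjusting constants, reduces \eqref{eq:SDE_apriori1} to the case where the right-hand side is finite, and then invokes the proof of \cite[Proposition~3.28]{PardouxRascanu2014} (noting that only the linear growth condition, not local Lipschitz continuity, is used there). So your write-up is genuinely different in that it is a full proof rather than a citation. It is worth remarking that the standard route in references such as \cite{PardouxRascanu2014} is typically via It\^o's formula applied to (a smoothed version of) $|X_s|^p$: this produces increments $p|X_r|^{p-1}|b(r,X_r)|$ and $|X_r|^{p-2}|\sigma(r,X_r)|^2$, in which the linear-growth bounds yield terms of the form $K(r)|X_r|^p$ directly, so a single Gronwall step already gives the linear exponent $C_p\|K\|_{L^1}$ without any partitioning. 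Your diagnosis that a ``direct Gronwall on $\E[\sup|X|^p]$'' fails refers to the cruder route via $|X_s|^p\le C_p(|\xi|^p+(\int|b|)^p+|\int\sigma\,dW|^p)$, which indeed picks up the bad factor $\|K\|_{L^1}^{p-1}$ through Jensen; the It\^o-formula approach sidesteps this. Either method works, and yours has the virtue of being elementary (no It\^o on $|x|^p$) at the cost of the partition bookkeeping.

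One very minor point: $\tau_N\uparrow T$ a.s.\ is not a consequence of path continuity alone but of the standing hypothesis that $(X_s)_{s\in[t_0,T]}$ is a (global) solution on $[t_0,T]$, whence $\sup_{[t_0,T]}|X|<\infty$ a.s.
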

	
	\begin{proof}
		If $ b_0(r)\leq K(r) $ and $ \sigma_0(r)\leq K(r)^{1/2} $, then \eqref{eq:SDE_apriori2} follows from \eqref{eq:SDE_apriori1} by adjusting the constant appropriately.  
		To prove \eqref{eq:SDE_apriori1}, it suffices to consider the case where the right-hand side is finite.  
		In this case, \eqref{eq:SDE_apriori1} follows from the proof of \cite[Proposition~3.28]{PardouxRascanu2014}.  
		(Although that proposition assumes local Lipschitz continuity of the coefficients, only the linear growth condition is used for the estimate of the solution.)
	\end{proof}
	
	As a corollary, we also obtain the following a priori estimates for the difference of two solutions.
	
	\begin{lem}\label{th:SDE_apriori_diff}
		Let $ (\xi^1,b^1,\sigma^1):=(\xi,b,\sigma) $, and let another triple  
		$ (\xi^2,b^2,\sigma^2) $ satisfy the same assumptions as in the beginning of this section.  
		Suppose that there exists a nonnegative function $ K\in L^1([0,T]) $ such that, for all $ r\in[t_0,T] $ and $ x,x'\in\R^d $,
		\begin{align*}
			|b^2(r,x)-b^2(r,x')|\leq K(r)|x-x'|,\quad
			|\sigma^2(r,x)-\sigma^2(r,x')|\leq K(r)^{1/2}|x-x'|.
		\end{align*}
		If $ (X_s^i)_{s\in[t_0,T]} $, $ i=1,2 $, are the solutions to the SDEs
		\begin{align}\label{eq:SDE_apriori_diff}
			X_s^i = \xi^i + \int_{t_0}^s b^i(r,X_r^i)\,dr + \int_{t_0}^s \sigma^i(r,X_r^i)\,dW_r,\quad s\in[t_0,T],
		\end{align}
		then for every $ p\geq 2 $, there exists a constant $ C_p>0 $, depending only on $p$, such that
		\begin{align*}
			&\E\bigg[\sup_{s\in[t_0,T]}|X_s^1-X_s^2|^p\bigg]\\
			&\leq C_pe^{C_p\|K\|_{L^1}}
			\E\bigg[|\xi^1-\xi^2|^p
			+\bigg(\int_{t_0}^T|b^1-b^2|(r,X_r^1)\,dr\bigg)^p
			+\bigg(\int_{t_0}^T|\sigma^1-\sigma^2|^2(r,X_r^1)\,dr\bigg)^{p/2}\bigg].
		\end{align*}
	\end{lem}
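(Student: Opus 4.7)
The plan is to derive a single SDE for the difference $Y_s := X^1_s - X^2_s$ and then apply Lemma~\ref{th:SDE_apriori1} directly to it. Setting $\eta := \xi^1 - \xi^2$, subtracting the two equations in~\eqref{eq:SDE_apriori_diff} gives
\begin{align*}
Y_s = \eta + \int_{t_0}^s [b^1(r, X^1_r) - b^2(r, X^2_r)]\,dr + \int_{t_0}^s [\sigma^1(r, X^1_r) - \sigma^2(r, X^2_r)]\,dW_r.
\end{align*}
The key manipulation is to insert $b^2(r, X^1_r)$ and $\sigma^2(r, X^1_r)$ inside each integrand, obtaining
\begin{align*}
b^1(r, X^1_r) - b^2(r, X^2_r) &= \beta_r + [b^2(r, X^1_r) - b^2(r, X^1_r - Y_r)],\\
\sigma^1(r, X^1_r) - \sigma^2(r, X^2_r) &= \Sigma_r + [\sigma^2(r, X^1_r) - \sigma^2(r, X^1_r - Y_r)],
\end{align*}
where the ``mismatch'' processes $\beta_r := (b^1 - b^2)(r, X^1_r)$ and $\Sigma_r := (\sigma^1 - \sigma^2)(r, X^1_r)$ are $(\mathscr{F}_t)$-progressively measurable.

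Next I would introduce the random coefficients $\widetilde{b}(\omega, r, y) := \beta_r(\omega) + b^2(\omega, r, X^1_r(\omega)) - b^2(\omega, r, X^1_r(\omega) - y)$ and, analogously, $\widetilde{\sigma}(\omega, r, y)$. Joint progressive measurability of $\widetilde{b}$ and $\widetilde{\sigma}$ is automatic from progressive measurability of $b^2, \sigma^2$ composed with the progressively measurable process $X^1$, together with continuity in $y$ inherited from the Lipschitz assumption. By construction, $Y$ solves
\begin{align*}
Y_s = \eta + \int_{t_0}^s \widetilde{b}(r, Y_r)\,dr + \int_{t_0}^s \widetilde{\sigma}(r, Y_r)\,dW_r, \quad s \in [t_0, T],
\end{align*}
with $\mathscr{F}_{t_0}$-measurable initial condition $\eta$. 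The Lipschitz property of $b^2$ and $\sigma^2$ then yields linear growth bounds
\begin{align*}
|\widetilde{b}(r, y)| \leq |\beta_r| + K(r)|y|, \qquad |\widetilde{\sigma}(r, y)| \leq |\Sigma_r| + K(r)^{1/2}|y|,
\end{align*}
so Lemma~\ref{th:SDE_apriori1} applies with $b_0(r) := |\beta_r|$ and $\sigma_0(r) := |\Sigma_r|$, delivering exactly the asserted inequality.

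The argument is essentially a standard linearization of the difference equation, so no deep obstacle arises; the only point that deserves care is checking that the hypotheses of Lemma~\ref{th:SDE_apriori1} are literally met for the random coefficients $\widetilde{b}$ and $\widetilde{\sigma}$, which is immediate as explained above. As in the proof of Lemma~\ref{th:SDE_apriori1}, one may restrict to the case where the right-hand side of the claimed estimate is finite, since otherwise the inequality is vacuous.
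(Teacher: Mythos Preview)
Your proof is correct and follows essentially the same approach as the paper: write the difference $Y_s=X^1_s-X^2_s$ as the solution of an SDE with random coefficients $\widetilde{b},\widetilde{\sigma}$ obtained by inserting $b^2(r,X^1_r)$ and $\sigma^2(r,X^1_r)$, then apply Lemma~\ref{th:SDE_apriori1} with $b_0(r)=|(b^1-b^2)(r,X^1_r)|$ and $\sigma_0(r)=|(\sigma^1-\sigma^2)(r,X^1_r)|$. The only cosmetic difference is that the paper anchors the second bracket at $X^2_r$ (writing $b^2(r,x+X^2_r)-b^2(r,X^2_r)$) whereas you anchor at $X^1_r$; both yield the same linear growth bounds and the same conclusion.
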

	
	\begin{proof}
		It suffices to apply Lemma~\ref{th:SDE_apriori1} to the SDE satisfied by the difference $ (X_s^1-X_s^2)_{s\in[t_0,T]} $:
		\begin{align*}
			X_s^1 - X_s^2
			= \xi^1-\xi^2 + \int_{t_0}^s \widetilde{b}(r,X_r^1-X_r^2)\,dr
			+ \int_{t_0}^s \widetilde{\sigma}(r,X_r^1-X_r^2)\,dW_r,
		\end{align*}
		where
		\begin{align*}
			&\widetilde{b}(r,x):= b^1(r,X_r^1)-b^2(r,X_r^1)
			+ b^2(r,x+X_r^2)-b^2(r,X_r^2),\\
			&\widetilde{\sigma}(r,x):= \sigma^1(r,X_r^1)-\sigma^2(r,X_r^1)
			+ \sigma^2(r,x+X_r^2)-\sigma^2(r,X_r^2).\qedhere
		\end{align*}
	\end{proof}
	
	\begin{lem}\label{th:SDE_apriori_diff_loc}
		Let $ (\xi^1,b^1,\sigma^1):=(\xi,b,\sigma) $, and let another triple  
		$ (\xi^2,b^2,\sigma^2) $ satisfy the same assumptions as in the beginning of this section.  
		Assume that for each $ N=1,2,\dots $, there exists a nonnegative function $ K_N\in L^1([0,T]) $ such that
		\begin{align*}
			&|b^2(r,x)-b^2(r,x')|\leq K_N(r)|x-x'|,\\
			&|\sigma^2(r,x)-\sigma^2(r,x')|\leq K_N(r)^{1/2}|x-x'|,
			\quad |x|,|x'|\leq N,\;\; r\in[t_0,T].
		\end{align*}
		Let $ (X_s^i)_{s\in[t_0,T]} $, $ i=1,2 $, be the solutions to \eqref{eq:SDE_apriori_diff}, and define
		\[
		\tau_N := \inf\{s\in[t_0,T]\mid |X_s^1|\geq N\text{ or }|X_s^2|\geq N\}\wedge T.
		\]
		Then for every $ p\geq 2 $, there exists a constant $ C_p>0 $, depending only on $p$, such that
		\begin{align*}
			&\E\bigg[\sup_{s\in[t_0,T]}|X^1_{s\wedge\tau_N}-X^2_{s\wedge\tau_N}|^p\bigg]\\
			&\leq C_pe^{C_p\|K_N\|_{L^1}}
			\E\bigg[|\xi^1-\xi^2|^p
			+\bigg(\int_{t_0}^{\tau_N}|b^1-b^2|(r,X_r^1)\,dr\bigg)^p
			+\bigg(\int_{t_0}^{\tau_N}|\sigma^1-\sigma^2|^2(r,X_r^1)\,dr\bigg)^{p/2}\bigg].
		\end{align*}
	\end{lem}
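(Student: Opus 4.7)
My plan is to reduce Lemma~\ref{th:SDE_apriori_diff_loc} to the globally Lipschitz a priori estimate of Lemma~\ref{th:SDE_apriori1} by exhibiting an SDE for the stopped difference $Z_s := X^1_{s\wedge\tau_N} - X^2_{s\wedge\tau_N}$ whose coefficients are globally Lipschitz in $x$ with modulus $K_N$. The starting point is the identity
\[
Z_s = (\xi^1-\xi^2) + \int_{t_0}^{s}[b^1(r,X^1_r) - b^2(r,X^2_r)]\,\mathbf{1}_{r<\tau_N}\,dr + \int_{t_0}^{s}[\sigma^1(r,X^1_r) - \sigma^2(r,X^2_r)]\,\mathbf{1}_{r<\tau_N}\,dW_r,
\]
which follows from the definition of $\tau_N$ and the continuity of $X^1,X^2$.

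Following the pattern of the proof of Lemma~\ref{th:SDE_apriori_diff}, I would introduce auxiliary coefficients by
\[
\widetilde{b}(r,x) := \big\{[b^1(r,X^1_r) - b^2(r,X^1_r)] + [b^2(r,\chi_N(x+X^2_r)) - b^2(r,X^2_r)]\big\}\mathbf{1}_{r<\tau_N},
\]
and $\widetilde{\sigma}(r,x)$ analogously, where $\chi_N:\R^d\to\R^d$ denotes the $1$-Lipschitz metric projection onto the closed ball $\{|y|\le N\}$. On $\{r<\tau_N\}$ one has $|X^1_r|,|X^2_r|\le N$, hence $\chi_N(X^2_r)=X^2_r$, and plugging $x=Z_r$ gives $\chi_N(Z_r+X^2_r)=\chi_N(X^1_r)=X^1_r$, so $\widetilde{b}(r,Z_r)$ reproduces the original drift $[b^1(r,X^1_r)-b^2(r,X^2_r)]\mathbf{1}_{r<\tau_N}$; analogously for $\widetilde{\sigma}$. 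Consequently $Z$ solves
\[
Z_s = (\xi^1-\xi^2) + \int_{t_0}^{s}\widetilde{b}(r,Z_r)\,dr + \int_{t_0}^{s}\widetilde{\sigma}(r,Z_r)\,dW_r.
\]

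It remains to verify the hypotheses of Lemma~\ref{th:SDE_apriori1}. The local Lipschitz condition on $b^2,\sigma^2$ combined with the $1$-Lipschitz property of $\chi_N$ yields $|\widetilde{b}(r,x)-\widetilde{b}(r,x')|\le K_N(r)|x-x'|$ and $|\widetilde{\sigma}(r,x)-\widetilde{\sigma}(r,x')|\le K_N(r)^{1/2}|x-x'|$ globally in $x$. Setting
\[
b_0(r) := |b^1-b^2|(r,X^1_r)\mathbf{1}_{r<\tau_N},\qquad
\sigma_0(r) := |\sigma^1-\sigma^2|(r,X^1_r)\mathbf{1}_{r<\tau_N},
\]
one has $|\widetilde{b}(r,0)|\le b_0(r)$, $|\widetilde{\sigma}(r,0)|\le\sigma_0(r)$ (using $\chi_N(X^2_r)=X^2_r$ on $\{r<\tau_N\}$), and the linear growth bound $|\widetilde{b}(r,x)|\le b_0(r)+K_N(r)|x|$, similarly for $\widetilde{\sigma}$, follows. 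Applying Lemma~\ref{th:SDE_apriori1} to $Z$ and using $\int_{t_0}^T(\cdot)\mathbf{1}_{r<\tau_N}\,dr = \int_{t_0}^{\tau_N}(\cdot)\,dr$ produces the announced estimate.

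The only genuinely nontrivial point is the simultaneous use of the projection $\chi_N$ (to upgrade the local Lipschitz bound into a global one, valid for all $x\in\R^d$ and not merely for $x=Z_r$) together with the indicator $\mathbf{1}_{r<\tau_N}$ (to freeze the dynamics past the exit time, so that the affine-growth forcing terms $b_0,\sigma_0$ automatically carry the correct truncation). Neither device alone suffices; combined, they produce an SDE for $Z$ to which the globally Lipschitz estimate applies verbatim.
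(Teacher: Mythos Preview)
Your proposal is correct and follows essentially the same approach as the paper. The paper defines $\theta_N(x):=(Nx)/(|x|\vee N)$ (your $\chi_N$), sets $b_N^i(r,x):=b^i(r,\theta_N(x))\mathbf{1}_{\{r\le\tau_N\}}$ and $\sigma_N^i$ similarly, observes that the stopped processes $X^i_{\cdot\wedge\tau_N}$ solve SDEs with these globally Lipschitz coefficients, and then invokes Lemma~\ref{th:SDE_apriori_diff}; since Lemma~\ref{th:SDE_apriori_diff} itself reduces to Lemma~\ref{th:SDE_apriori1} via the same ``difference-as-solution'' trick, you have simply merged the two reductions into a single step.
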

	
	\begin{proof}
		For each $ N $, define $ \theta_N:\R^d\to\R^d $ by $ \theta_N(x):=(Nx)/(|x|\vee N) $ and set
		\begin{align*}
			b_N^i(r,x):=b^i(r,\theta_N(x))\mathbf{1}_{\{r\leq\tau_N\}},\quad
			\sigma_N^i(r,x):=\sigma^i(r,\theta_N(x))\mathbf{1}_{\{r\leq\tau_N\}}.
		\end{align*}
		Note that on the event $\{\tau_N>t_0\}$, one has $|X_{r\wedge\tau_N}^i|\leq N$.  
		Hence
		\begin{align*}
			X^i_{s\wedge\tau_N}
			= \xi^i + \int_{t_0}^s b_N^i(r,X^i_{r\wedge\tau_N})\,dr
			+ \int_{t_0}^s \sigma_N^i(r,X^i_{r\wedge\tau_N})\,dW_r.
		\end{align*}
		Therefore, the claim follows by applying Lemma~\ref{th:SDE_apriori_diff} to this SDE.
	\end{proof}
	
	Finally, we present the stability property for SDEs with locally Lipschitz coefficients.
	
	\begin{lem}[Stability]\label{th:SDE_stability}
		Let $(b,\sigma)$ and $(b^n,\sigma^n)$ for $n=1,2,\dots$ satisfy the assumptions \ref{item:asummption_of_main_theorem_2} and \ref{item:asummption_of_main_theorem_3} in Section~\ref{section:Main_theorem},  
		with the same functions $K,K_N\in L^1([0,T])$ for all $N\geq1$.  
		Let $x_0,x_n\in\R^d$ for $n=1,2,\dots$, and suppose $x_n\to x_0$ as $n\to\infty$.  
		Assume further that
		\[
		\lim_{n\to \infty}b^n(r,x)= b(r,x),\quad
		\lim_{n\to \infty}\sigma^n(r,x)= \sigma(r,x),\quad x\in\R^d,\;\;\text{a.e.}\;r\in[0,T].
		\]
		Let $(X_s)_{s\in[t_0,T]}$ be the solution to SDE~\eqref{eq:SDE_apriori01} with initial value $x_0$, and let $(X_s^n)_{s\in[t_0,T]}$ be the solution to the corresponding SDE with initial value $x_n$ and coefficients $(b^n,\sigma^n)$.
		Then, $\sup_{s\in[t_0,T]}|X_s^n-X_s|\to 0$ in probability as $n\to\infty$.
	\end{lem}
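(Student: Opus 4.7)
The strategy is a standard localization argument combined with Lemma~\ref{th:SDE_apriori_diff_loc}, taking care that every dominating quantity is independent of $n$. First, by \ref{item:asummption_of_main_theorem_2} with the \emph{common} $K\in L^1([0,T])$, Lemma~\ref{th:SDE_apriori1} yields a constant $C>0$ (depending only on $\|K\|_{L^1}$ and on $\sup_n|x_n|<\infty$) such that
\[
\E\Big[\sup_{s\in[t_0,T]}|X_s|^2\Big]+\sup_n\E\Big[\sup_{s\in[t_0,T]}|X_s^n|^2\Big]\le C.
\]
Define $\tau_N^n:=\inf\{s\in[t_0,T]\mid |X_s^n|\ge N \text{ or } |X_s|\ge N\}\wedge T$. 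Markov's inequality then gives $\Prob(\tau_N^n<T)\le 2C/N^2$ uniformly in $n$.

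Next, I would apply Lemma~\ref{th:SDE_apriori_diff_loc} with $(b^1,\sigma^1,\xi^1)=(b,\sigma,x_0)$ and $(b^2,\sigma^2,\xi^2)=(b^n,\sigma^n,x_n)$, so that the locally Lipschitz constants come from the \emph{common} $K_N$ in \ref{item:asummption_of_main_theorem_3}. This yields a constant $C_N'>0$ independent of $n$ with
\[
\E\Big[\sup_{s\in[t_0,T]}|X_{s\wedge\tau_N^n}-X^n_{s\wedge\tau_N^n}|^2\Big]\le C_N'\,\E\bigg[|x_0-x_n|^2+\bigg(\int_{t_0}^{T}|b-b^n|(r,X_r)\,dr\bigg)^{2}+\int_{t_0}^{T}|\sigma-\sigma^n|^2(r,X_r)\,dr\bigg].
\]
Crucially the integrands here depend only on the fixed process $X_r$ (not on $X_r^n$), because Lemma~\ref{th:SDE_apriori_diff_loc} evaluates the coefficient differences along $X^1$.

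The analytic core is then to show that both integrals vanish in expectation. Almost surely $r\mapsto X_r$ is continuous and hence bounded, and \ref{item:asummption_of_main_theorem_2} gives $|b-b^n|(r,X_r)\le 2K(r)(1+\sup_r|X_r|)$, an $L^1([t_0,T])$ dominator for fixed $\omega$. Combining the pointwise convergence $b^n(r,x)\to b(r,x)$ with the equi-Lipschitz bound $K_N$ from \ref{item:asummption_of_main_theorem_3} yields, via a standard Arzel\`a--Ascoli/countable-dense argument, \emph{locally uniform} convergence in $x$ for a.e.\ $r$; hence $|b-b^n|(r,X_r(\omega))\to 0$ for a.e.\ $r$. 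Dominated convergence in $r$ gives a.s.\ convergence of the integral to $0$, and a second application of dominated convergence in $\omega$ (using the uniform second-moment bound on $\sup_r|X_r|$) upgrades this to $L^2(\Omega)$-convergence. The $\sigma$-integral is handled identically at the $L^1(\Omega)$ level. Combining,
\[
\Prob\Big(\sup_{s\in[t_0,T]}|X_s-X_s^n|>\varepsilon\Big)\le \Prob(\tau_N^n<T)+\varepsilon^{-2}\E\Big[\sup_s|X_{s\wedge\tau_N^n}-X^n_{s\wedge\tau_N^n}|^2\Big]\le \frac{2C}{N^2}+o_n(1),
\]
and sending first $n\to\infty$ and then $N\to\infty$ concludes the proof.

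The main obstacle is essentially bookkeeping: one must exploit that \ref{item:asummption_of_main_theorem_2}--\ref{item:asummption_of_main_theorem_3} hold with the \emph{same} $K$ and $K_N$ for every $n$, so that the moment bound, the stopping-time tail bound, and the constant $C_N'$ supplied by Lemma~\ref{th:SDE_apriori_diff_loc} are all uniform in $n$. The single step that goes slightly beyond a direct invocation of the lemmas is the upgrade from pointwise to locally uniform convergence of $b^n$ and $\sigma^n$ in $x$, which relies crucially on the equi-Lipschitz hypothesis.
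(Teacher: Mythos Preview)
Your proof is correct and follows essentially the same route as the paper: localize via the stopping times $\tau_N^n$, control the stopped difference by Lemma~\ref{th:SDE_apriori_diff_loc} (with the coefficient differences evaluated along the fixed limit process $X$), show the right-hand side vanishes by dominated convergence, and handle the complementary event $\{\tau_N^n<T\}$ uniformly in $n$ via the moment bound from Lemma~\ref{th:SDE_apriori1}. The paper's proof is terser---it simply writes ``by the assumptions and the dominated convergence theorem'' where you spell out the passage from pointwise to locally uniform convergence in $x$ via the common equi-Lipschitz bound $K_N$---but the structure and the key ingredients are identical.
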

	
	\begin{proof}
		For $n,N=1,2,\dots$, set
		\[
		\tau_N^n := \inf\{s\in[t_0,T]\mid |X_s|\geq N \text{ or } |X_s^n|\geq N\}\wedge T.
		\]
		By Lemma~\ref{th:SDE_apriori_diff_loc}, there exists a constant $C_N>0$, independent of $n$, such that
		\begin{align*}
			&\E\bigg[\sup_{s\in[t_0,T]}|X_{s\wedge\tau_N^n}^n-X_{s\wedge\tau_N^n}|^2\bigg]\\
			&\leq C_N\E\bigg[|x_n-x_0|^2
			+\bigg(\int_{t_0}^{\tau_N^n}|b^n(r,X_r)-b(r,X_r)|\,dr\bigg)^2
			+\int_{t_0}^{\tau_N^n}|\sigma^n(r,X_r)-\sigma(r,X_r)|^2\,dr\bigg].
		\end{align*}
		For fixed $N$, the right-hand side tends to $0$ as $n\to\infty$ by the assumptions and the dominated convergence theorem.	
		On the other hand, by Lemma~\ref{th:SDE_apriori1}, there exists a constant $C'>0$, independent of $n$ and $N$, such that
		\begin{align*}
			\Prob(\tau_N^n<T)
			&\leq \Prob\bigg(\sup_{s\in[t_0,T]}|X_s|\geq N\bigg)
			+ \Prob\bigg(\sup_{s\in[t_0,T]}|X_s^n|\geq N\bigg)\\
			&\leq \frac{1}{N^2}\E\bigg[\sup_{s\in[t_0,T]}|X_s|^2\bigg]
			+ \frac{1}{N^2}\E\bigg[\sup_{s\in[t_0,T]}|X_s^n|^2\bigg]
			\leq \frac{C'}{N^2}(1+|x_0|^2+|x_n|^2),
		\end{align*}
		and hence $ \lim_{N\to\infty}\sup_n\Prob(\tau_N^n<T)=0 $. Now, for any $\epsilon>0$, we have
		\begin{align*}
			\Prob\bigg(\sup_{s\in[t_0,T]}|X_s^n-X_s|>\epsilon\bigg)
			&\leq \Prob\bigg(\sup_{s\in[t_0,T]}|X_{s\wedge\tau_N^n}^n-X_{s\wedge\tau_N^n}|>\epsilon\bigg)
			+ \Prob(\tau_N^n<T).
		\end{align*}
		Taking the limit superior as $n\to\infty$ and then letting $N\to\infty$ yields the desired conclusion.
	\end{proof}
	
	\section{Uniform convergence in probability of random fields}	
	The following lemma provides a criterion for the uniform convergence in probability
	of a sequence of random fields. 
	It may be compared to \cite[Theorem~4.4]{Kunita2004}, which assumes convergence in a H\"older-type norm.
	\begin{lem}\label{th:random_field_and_convergence_in_probability}
		Let $ (X(x))_{x\in\R^d} $ and $ (X_n(x))_{x\in\R^d},\;n=1,2,\dots $, be $\R^{d_0}$-valued random fields that are continuous almost surely.  
		Assume that for every $ x\in\R^d $, the sequence $ X_n(x) $ converges in probability to $ X(x) $ as $ n\to\infty $.  
		Furthermore, suppose there exist constants $ \gamma>0 $, $ \alpha>d $, and a locally bounded function $ \kappa:\R^{2d}\to[0,\infty) $ such that, for every $ n $,
		\[
		\E[|X_n(x)-X_n(y)|^\gamma]
		\leq \kappa(x,y)|x-y|^\alpha,\quad x,y\in\R^d.
		\]
		Then for every bounded set $ \mathbb{D}\subset\R^d $, we have $\sup_{x\in\mathbb{D}}|X_n(x)-X(x)| \to 0$ in probability as $n\to \infty$.
		In particular, for any $\R^d$-valued random variable $\xi$, it holds that $ X_n(\xi)\to X(\xi) $ in probability as $ n\to\infty $.
	\end{lem}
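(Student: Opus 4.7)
The plan is to establish equicontinuity in probability of the family $(X_n)_{n\ge 1}$ on compacta via a uniform Kolmogorov--Chentsov bound on the expected H\"older seminorm, and then combine this equicontinuity with pointwise convergence in probability on a finite grid. Throughout I may assume without loss of generality that $\mathbb{D}$ is compact, since replacing $\mathbb{D}$ by its closure does not change the supremum (the random fields are continuous a.s.).

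First, since $\alpha>d$, I fix any $\beta\in(0,(\alpha-d)/\gamma)$ and pick a closed cube $Q\subset\R^d$ containing $\mathbb{D}$. Let $\kappa_0:=\sup_{x,y\in Q}\kappa(x,y)<\infty$ by local boundedness of $\kappa$. The standard dyadic chaining proof of the Kolmogorov--Chentsov continuity theorem, applied to the uniform bound $\E[|X_n(x)-X_n(y)|^\gamma]\le \kappa_0|x-y|^\alpha$ on $Q$, yields a constant $M=M(\gamma,\alpha,\beta,d,Q,\kappa_0)$ independent of $n$ such that
\[
\E\biggl[\sup_{\substack{x,y\in \mathbb{D}\\ x\ne y}}\frac{|X_n(x)-X_n(y)|^\gamma}{|x-y|^{\beta\gamma}}\biggr]\le M.
\]
Passing to a subsequence $n_k$ along which $X_{n_k}(x)\to X(x)$ almost surely for every $x$ in a countable dense subset of $\mathbb{D}$, Fatou's lemma combined with the a.s.\ continuity of $X$ shows that $X$ satisfies the same bound with the same constant. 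Writing $\omega(f,\delta):=\sup_{x,y\in\mathbb{D},\,|x-y|<\delta}|f(x)-f(y)|$, Markov's inequality gives, uniformly in $n$,
\[
\Prob\bigl(\omega(X_n,\delta)>\eta\bigr)\le M\,\delta^{\beta\gamma}/\eta^\gamma,
\]
and likewise for $X$.

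Second, fix $\varepsilon>0$. I choose $\delta>0$ so small that the right-hand side of the above estimate is less than $\varepsilon$ (both for all $X_n$ and for $X$). Cover $\mathbb{D}$ by finitely many open balls of radius $\delta$ centered at points $x_1,\dots,x_N\in \mathbb{D}$. For every $x\in\mathbb{D}$ there exists some $x_i$ with $|x-x_i|<\delta$, hence
\[
|X_n(x)-X(x)|\le \omega(X_n,\delta)+|X_n(x_i)-X(x_i)|+\omega(X,\delta),
\]
and consequently
\[
\sup_{x\in\mathbb{D}}|X_n(x)-X(x)|\le \omega(X_n,\delta)+\max_{1\le i\le N}|X_n(x_i)-X(x_i)|+\omega(X,\delta).
\]
The first and third terms each exceed any prescribed threshold with probability at most $\varepsilon$ uniformly in $n$, while the middle term converges to $0$ in probability as $n\to\infty$ by the pointwise hypothesis applied to the finite collection $\{x_1,\dots,x_N\}$. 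This yields $\sup_{x\in\mathbb{D}}|X_n(x)-X(x)|\to 0$ in probability. For an arbitrary $\R^d$-valued random variable $\xi$, fix $\varepsilon,\eta>0$ and choose $R>0$ with $\Prob(|\xi|>R)<\eta$. Since $\{|X_n(\xi)-X(\xi)|>\varepsilon\}\subset \{|\xi|>R\}\cup\{\sup_{|x|\le R}|X_n(x)-X(x)|>\varepsilon\}$, the preceding step applied to $\mathbb{D}=\{|x|\le R\}$ gives $\limsup_n\Prob(|X_n(\xi)-X(\xi)|>\varepsilon)\le\eta$, and $\eta$ is arbitrary.

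The main technical point is the first step: one needs not merely the existence of a H\"older-continuous modification for each $X_n$, but a quantitative bound on the $\gamma$-th moment of the H\"older seminorm that is uniform in $n$. This comes out of the standard dyadic chaining estimate, whose constant depends only on $d$, $\gamma$, $\alpha$, $\beta$, the diameter of the cube $Q$, and $\sup_Q\kappa$; local boundedness of $\kappa$ is precisely what allows us to convert the $(x,y)$-dependent moment bound into the uniform form required by the chaining argument. Once this uniform equicontinuity in probability is in hand, the covering and triangle inequality argument is routine.
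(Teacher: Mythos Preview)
Your proof is correct and follows essentially the same route as the paper: a uniform Kolmogorov--Chentsov bound on the $\gamma$-th moment of the H\"older seminorm, followed by a finite covering of the bounded set and pointwise convergence on the grid. The only cosmetic difference is that the paper applies the Kolmogorov bound directly to $Y_n:=X_n-X$ (obtaining $\E[|Y_n(x)-Y_n(y)|^\gamma]\le 2^{\gamma+1}\kappa(x,y)|x-y|^\alpha$ via Fatou) and thus needs to control only one modulus of continuity, whereas you bound $\omega(X_n,\delta)$ and $\omega(X,\delta)$ separately; both organizations work.
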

	
	\begin{proof}
		Set $ Y_n(x):=X_n(x)-X(x) $.  
		By Fatou's lemma,
		\begin{align*}
			\E[|Y_n(x)-Y_n(y)|^\gamma]
			&\leq 2^\gamma\Big(\E[|X_n(x)-X_n(y)|^\gamma]
			+\varliminf_{k\to\infty}\E[|X_k(x)-X_k(y)|^\gamma]\Big)\\
			&\leq 2^{\gamma+1}\kappa(x,y)|x-y|^\alpha.
		\end{align*}
		Choose $ \beta>0 $ such that $ 0<\beta<(\alpha-d)/\gamma $, and define
		\[
		L_n:=\sup_{x,y\in\mathbb{D},\,x\neq y}\frac{|Y_n(x)-Y_n(y)|}{|x-y|^\beta}.
		\]
		By Kolmogorov's continuity theorem, there exists a constant $ C>0 $, independent of $ n $, such that $ \E[(L_n)^\gamma]\leq C $.  
		Fix any $ \delta>0 $.  
		Since $\mathbb{D}$ is a bounded subset of $\R^d$, there exist finitely many points $ x_1,\dots,x_k\in\mathbb{D} $ such that $\mathbb{D}\subset B_\delta(x_1)\cup\cdots\cup B_\delta(x_k)$,	where $ B_\delta(x):=\{y\in\R^d\mid |y-x|<\delta\} $.  
		For each $ x\in\mathbb{D} $, there exists an index $ j_0 $ with $ x\in B_\delta(x_{j_0}) $, so that
		\begin{equation*}
			|Y_n(x)|
			\leq |Y_n(x)-Y_n(x_{j_0})|+|Y_n(x_{j_0})|
			\leq L_n\delta^\beta+\sum_{j=1}^k|Y_n(x_j)|.
		\end{equation*}
		Hence, for any $ \epsilon>0 $,
		\begin{align*}
			\Prob\bigg(\sup_{x\in\mathbb{D}}|Y_n(x)|>\epsilon\bigg)
			&\leq \Prob\Big(L_n\delta^\beta>\frac{\epsilon}{2}\Big)
			+\sum_{j=1}^k\Prob\Big(|Y_n(x_j)|>\frac{\epsilon}{2k}\Big).
		\end{align*}
		Since $\Prob(L_n\delta^\beta>\epsilon/2)\leq (2/\epsilon)^\gamma\delta^{\beta\gamma}\E[(L_n)^\gamma]\leq (2/\epsilon)^\gamma C\delta^{\beta\gamma}$,
		and since $x_1,\dots,x_k$ are independent of $n$ and $Y_n(x_j)\to 0$ in probability for each $j$, letting $n\to\infty$ gives
		\[
		\varlimsup_{n\to\infty}\Prob\bigg(\sup_{x\in\mathbb{D}}|Y_n(x)|>\epsilon\bigg)
		\leq \Big(\frac{2}{\epsilon}\Big)^\gamma C\delta^{\beta\gamma}.
		\]
		Thus, letting $\delta\downarrow0$ yields $\lim_{n\to \infty}\Prob(\sup_{x\in \mathbb{D}}|Y_n(x)|>\epsilon)= 0$,
		that is, $ \sup_{x\in\mathbb{D}}|X_n(x)-X(x)|\to0 $ in probability as $ n\to\infty $.
	\end{proof}

	\section{Weak derivatives}
	\subsection{Joint measurability of weak derivatives}
	First, using the idea of \cite[Chapter~IV, Lemma~62]{Protter2005} or \cite[Chapter~IV, Exercise~5.17]{RevuzYor1999}, 
	we state the following lemma showing that (local) convergence in measure for each fixed parameter yields a jointly measurable limit.	
	
	\begin{lem}\label{th:measurability_convergence_in_measure}
		Let $(\mathbb{X},\mathscr B,\mu)$ be a measure space, $(A,\mathscr A)$ a measurable space, and $\mathbb{B}$ a separable Banach space.  
		Let $ f_n:\mathbb{X}\times A\to\mathbb{B}\;(n=1,2,\dots)$ be $\mathscr B\otimes\mathscr A/\mathscr B(\mathbb{B})$-measurable functions.  
		Let $(\mathbb{X}_k)_{k\ge1}$ be an increasing sequence of $\mathscr B$-measurable sets with $\mathbb{X}_k\uparrow\mathbb{X}$.  
		Assume that for each $a\in A$, there exists a $\mathscr B/\mathscr B(\mathbb{B})$-measurable function $ f(\cdot,a):\mathbb{X}\to\mathbb{B} $ such that $(f_n(\cdot,a))_{n\ge1}$ converges to $f(\cdot,a)$ in measure on each $\mathbb{X}_k$.  
		Then there exists a $\mathscr B\otimes\mathscr A/\mathscr B(\mathbb{B})$-measurable function $\widetilde{f}$ such that, for every $a\in A$, we have
		\[
		\widetilde{f}(x,a)=f(x,a),\quad \mu\text{-a.e.}\;x\in\mathbb{X}.
		\]
	\end{lem}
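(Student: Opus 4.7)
The plan is to extract, in an $\mathscr A$-measurable manner in $a$, a subsequence $(f_{N_m(a)}(\cdot,a))_m$ that is $\mu$-almost-surely Cauchy in $\mathbb{B}$ on each $\mathbb{X}_k$, and then define $\widetilde{f}$ as its jointly measurable pointwise limit, extended by $0$ off the set where the limit exists. I will assume without loss of generality that $\mu(\mathbb{X}_k)<\infty$ for every $k$ (refining the exhaustion if necessary; in the intended application in the main text $\mu$ is Lebesgue measure on $\R^d$, so this incurs no loss).

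First I would introduce the $\mathscr A$-measurable Cauchy-type pseudo-distances
$$\phi_{n,n',k}(a) := \int_{\mathbb{X}_k}\bigl(|f_n(x,a)-f_{n'}(x,a)|_{\mathbb{B}}\wedge 1\bigr)\,d\mu(x),\qquad n,n',k\ge 1,$$
whose measurability in $a$ follows from the joint measurability of $f_n,f_{n'}$ and Fubini's theorem. For each fixed $a$, the hypothesis that $f_n(\cdot,a)\to f(\cdot,a)$ in measure on $\mathbb{X}_k$, together with the dominated convergence theorem applied to the bounded integrand, yields $\phi_{n,n',k}(a)\to 0$ as $n,n'\to\infty$. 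I would then recursively define integer-valued $\mathscr A$-measurable maps $N_m(a)$ by $N_0(a):=0$ and
$$N_{m+1}(a) := \inf\Bigl\{n>N_m(a)\,:\,\phi_{n,n',k}(a)\le 4^{-m}\text{ for every }n'\ge n\text{ and every }k\le m+1\Bigr\}.$$
The infimum is finite by the Cauchy property, and the $\mathscr A$-measurability of $N_{m+1}$ follows inductively from that of $N_m$ and the countable family $\{\phi_{n,n',k}\}$. Setting $g_m(x,a):=f_{N_m(a)}(x,a)=\sum_{n\ge 1}\mathbf{1}_{\{N_m(a)=n\}}f_n(x,a)$ then preserves joint $\mathscr B\otimes\mathscr A$-measurability.

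By construction $\sum_{m\ge k-1}\phi_{N_m(a),N_{m+1}(a),k}(a)<\infty$ for every $a$ and $k$, so monotone convergence plus a Borel--Cantelli argument gives
$$\sum_{m\ge 1}\bigl(|g_m(x,a)-g_{m+1}(x,a)|_{\mathbb{B}}\wedge 1\bigr)<\infty\quad\text{for }\mu\text{-a.e.\ }x\in\mathbb{X}_k,$$
and hence, since $\mathbb{X}_k\uparrow\mathbb{X}$, for $\mu$-a.e.\ $x\in\mathbb{X}$. On such $x$ the sequence $(g_m(x,a))_m$ is Cauchy in the complete space $\mathbb{B}$. Using separability of $\mathbb{B}$, the set
$$E:=\bigcap_{j\ge 1}\bigcup_{M\ge 1}\bigcap_{m,m'\ge M}\bigl\{(x,a):|g_m(x,a)-g_{m'}(x,a)|_{\mathbb{B}}<1/j\bigr\}$$
is $\mathscr B\otimes\mathscr A$-measurable, and I would define $\widetilde f(x,a):=\mathbf{1}_E(x,a)\lim_{m\to\infty}g_m(x,a)$ (the limit being taken in $\mathbb{B}$ on $E$ and set to $0$ off $E$). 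Joint measurability of $\widetilde f$ follows from separability of $\mathbb{B}$. For each fixed $a$, the section $\widetilde f(\cdot,a)$ is the $\mu$-a.e.\ pointwise limit of a subsequence of $(f_n(\cdot,a))_n$; since the original sequence converges in measure to $f(\cdot,a)$ on each $\mathbb{X}_k$, uniqueness of limits in measure forces $\widetilde f(\cdot,a)=f(\cdot,a)$ $\mu$-a.e.

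The main technical obstacle is that the classical extraction of an a.e.-convergent subsequence from a sequence convergent in measure depends on the choice of indices, and here those indices must depend on $a$ while remaining $\mathscr A$-measurable and working uniformly across the exhaustion $\mathbb{X}_k$. This is precisely what the stopping-rule definition of $N_m(a)$, combined with a diagonal tightening over $k$ and the Fubini-based $\mathscr A$-measurability of the pseudo-distances $\phi_{n,n',k}$, is designed to overcome.
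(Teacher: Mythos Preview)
Your argument is correct and follows essentially the same strategy as the paper's: measurably (in $a$) extract a rapidly Cauchy subsequence via a first-passage stopping rule, apply a Borel--Cantelli argument to get $\mu$-a.e.\ convergence, and define $\widetilde f$ as the pointwise limit (set to $0$ where it fails). The only minor differences are that the paper uses the levels $\mu(\{\|f_i(\cdot,a)-f_j(\cdot,a)\|_{\mathbb B}>2^{-k}\})$ in place of your Ky~Fan integrals $\phi_{n,n',k}$ (thereby avoiding your $\mu(\mathbb X_k)<\infty$ reduction), and handles the exhaustion by first treating each $\mathbb X_k$ separately and then taking a further limit over $k$, rather than by your diagonal tightening.
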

	
	\begin{proof}
		We first consider the case $\mathbb{X}_k=\mathbb{X}$ for all $k$.  
		For each $a\in A$, define a sequence of integers $(n_k(a))_{k\ge0}$ recursively by setting $n_0(a):=1$ and
		\[
		n_k(a):=\inf\Big\{m>n_{k-1}(a)\;\Bigm|\;
		\sup_{i,j\ge m}\mu\big(\{\|f_i(\cdot,a)-f_j(\cdot,a)\|_{\mathbb{B}}>2^{-k}\}\big)\le2^{-k}\Big\},
		\quad k\geq 1.
		\]
		Then each map $a\mapsto n_k(a)$ is $\mathscr A$-measurable,  
		and by the Borel–Cantelli lemma, it follows that for every $a\in A$,  
		the sequence $(f_{n_k(a)}(x,a))_{k\ge1}$ is a Cauchy sequence in $\mathbb{B}$ for $\mu$-a.e.\;$x$.
		Define $\widetilde{f}:\mathbb{X}\times A\to\mathbb{B}$ by
		\[
		\widetilde{f}(x,a):=
		\begin{cases}
			\displaystyle\lim_{k\to\infty}f_{n_k(a)}(x,a), & \text{if the limit exists in }\mathbb{B},\\[1ex]
			0, & \text{otherwise.}
		\end{cases}
		\]
		Then $\widetilde{f}$ is $\mathscr B\otimes\mathscr A/\mathscr B(\mathbb{B})$-measurable and satisfies the desired property.
		
		In the general case, apply the above argument to the sequence $(f_n\mathbf{1}_{\mathbb{X}_k\times A})_{n\ge1}$ for each $k$.  
		This yields $\mathscr B\otimes\mathscr A/\mathscr B(\mathbb{B})$-measurable functions $\widetilde{f}^k$ such that
		\[
		\widetilde{f}^k(x,a)=f(x,a)\mathbf{1}_{\mathbb{X}_k}(x),\quad \mu\text{-a.e.}\;x,
		\]
		for every $a\in A$.  
		Finally, set
		\[
		\widetilde{f}(x,a):=
		\begin{cases}
			\displaystyle\lim_{k\to\infty}\widetilde{f}^k(x,a), & \text{if the limit exists in }\mathbb{B},\\[1ex]
			0, & \text{otherwise.}
		\end{cases}
		\]
		Then $\widetilde{f}$ satisfies the required properties.
	\end{proof}
	
	Next, we prepare a simple lemma on weak differentiability.
	
	\begin{lem}\label{th:weak_differentiability_function2}
		Let $ u:\R^d\to\R^d $ be a measurable function and $(\rho_n)_{n\ge1}$ a sequence of mollifiers on $\R^d$.  
		Then $u$ is weakly differentiable if and only if, for every $ N=1,2,\dots $, one has $ u\in L^1(B_N) $ and the sequence $(\nabla(u*\rho_n))_{n\ge1}$ converges in $L^1(B_N)$ (equivalently, is Cauchy in $L^1(B_N)$),  
		where $B_N:=\{x\in\R^d\mid |x|<N\}$.
	\end{lem}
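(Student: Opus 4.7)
The plan is to prove both implications by exploiting the identity $\nabla(u*\rho_n)=(\nabla u)*\rho_n$ in the weak sense, together with the standard fact that $L^1_{\mathrm{loc}}$ functions are approximated in $L^1_{\mathrm{loc}}$ by their mollifications. Completeness of $L^1(B_N)$ immediately gives the equivalence between ``convergent'' and ``Cauchy'' there, so it suffices to treat convergence.

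For the direction $(\Rightarrow)$, I assume $u$ is weakly differentiable. By the very definition of weak derivative this already gives $u\in L^1_{\mathrm{loc}}(\R^d;\R^d)$ and the existence of $\nabla u\in L^1_{\mathrm{loc}}(\R^d;\R^{d\times d})$. Then I apply the definition of weak derivative with the test function $y\mapsto \rho_n(x-y)$ (which lies in $C_c^\infty$ for each fixed $x$, provided $n$ is large enough or the support is shifted appropriately) to obtain the pointwise identity
\[
\partial_i(u*\rho_n)(x)=\int u(y)\,\partial_{x_i}\rho_n(x-y)\,dy
=-\int u(y)\,\partial_{y_i}\rho_n(x-y)\,dy
=((\partial_i u)*\rho_n)(x).
\]
Hence $\nabla(u*\rho_n)=(\nabla u)*\rho_n$ on $\R^d$. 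Since $\nabla u\in L^1_{\mathrm{loc}}$, the classical mollifier approximation gives $(\nabla u)*\rho_n\to\nabla u$ in $L^1(B_N)$ for every $N$, which is the desired conclusion.

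For the direction $(\Leftarrow)$, I assume $u\in L^1(B_N)$ and $\nabla(u*\rho_n)$ is Cauchy (hence convergent) in $L^1(B_N)$ for every $N$. Call the limits $v^N\in L^1(B_N;\R^{d\times d})$; they are consistent on intersections by restriction, so they patch together into a single $v\in L^1_{\mathrm{loc}}(\R^d;\R^{d\times d})$. For any $\varphi\in C_c^\infty(\R^d)$, choose $N$ so that $\supp\varphi\subset B_N$ and $\supp(\varphi*\check\rho_n)\subset B_N$ for all $n$ large enough; since $u*\rho_n$ is smooth, integration by parts gives
\[
\int_{\R^d}(u*\rho_n)\,\partial_i\varphi\,dx
=-\int_{\R^d}\partial_i(u*\rho_n)\,\varphi\,dx.
\]
Because $u\in L^1(B_N)$ implies $u*\rho_n\to u$ in $L^1(B_N)$, the left-hand side tends to $\int u\,\partial_i\varphi\,dx$, while by the Cauchy hypothesis the right-hand side tends to $-\int v_i\varphi\,dx$. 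Thus $v$ is the weak derivative of $u$, which completes the proof.

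There is no real obstacle here; the argument is a direct application of the definitions together with the basic mollifier approximation in $L^1_{\mathrm{loc}}$. The only point deserving a brief check is the commutation $\nabla(u*\rho_n)=(\nabla u)*\rho_n$ in the forward direction, which, as indicated, is an instance of the defining integration-by-parts identity with a specific test function.
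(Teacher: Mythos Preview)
Your proof is correct and follows essentially the same approach as the paper: for the direction $(\Leftarrow)$ you integrate by parts against a test function in $C_c^\infty(B_N)$ and pass to the limit using $u*\rho_n\to u$ and the assumed $L^1(B_N)$-convergence of $\nabla(u*\rho_n)$, exactly as the paper does; for $(\Rightarrow)$ you spell out the commutation $\nabla(u*\rho_n)=(\nabla u)*\rho_n$ and invoke mollifier approximation of $\nabla u\in L^1_{\mathrm{loc}}$, which the paper simply summarizes as following ``from the definition of weak differentiability and properties of mollifiers.'' The minor parentheticals (about $n$ large enough, or about $\supp(\varphi*\check\rho_n)$) are unnecessary but harmless.
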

	
	\begin{proof}
		The sufficiency follows immediately from the definition of weak differentiability and properties of mollifiers.  
		We show necessity.  
		Under the assumption, $u*\rho_n\in C^\infty(\R^d)$ for all $n$.  
		Fix $N=1,2,\dots$, and let $\varphi\in C_c^\infty(B_N)$ and $i=1,\dots,d$.  
		Then
		\begin{align*}
			\int_{B_N}(u*\rho_n)(x)\partial_i\varphi(x)\,dx
			= -\int_{B_N}(\partial_i(u*\rho_n))(x)\varphi(x)\,dx.
		\end{align*}
		Since $u*\rho_n\to u$ in $L^1(B_N)$ by the properties of mollifiers, and $(\partial_i(u*\rho_n))_{n\ge1}$ converges in $L^1(B_N)$ by assumption, we may pass to the limit as $n\to\infty$ in the above equality.  
		Thus $u$ is weakly differentiable on each $B_N$, and hence on $\R^d$.
	\end{proof}
	
	We now prove the following technical lemma concerning the joint measurability of weak derivatives.
	
	\begin{lem}\label{th:jointly_mble_vesion_of_weak_derivative}
		Let $(\Omega',\mathscr F')$ be a measurable space, and let $u:\Omega'\times\R^d\to\R^d$ be an $\mathscr F'\otimes\mathscr B(\R^d)$-measurable function.  
		Then the set
		\[
		\Gamma := \{\omega\in\Omega'\mid x\mapsto u(\omega,x)\text{ is weakly differentiable}\}
		\]
		is $\mathscr F'$-measurable.  
		Moreover, there exists an $\mathscr F'\otimes\mathscr B(\R^d)$-measurable function $v:\Omega'\times\R^d\to\R^{d\times d}$ such that
		\[
		v(\omega,x)=\nabla u(\omega,x),\quad\text{a.e. }x\in\R^d,\;\;\text{for all }\omega\in\Gamma.
		\]
	\end{lem}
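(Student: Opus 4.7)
The plan is to reduce the statement to the mollifier characterization of Lemma~\ref{th:weak_differentiability_function2} and then invoke Lemma~\ref{th:measurability_convergence_in_measure} to promote the a.e.-defined weak derivative to a jointly measurable object.

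First I would fix a sequence $(\rho_n)_{n\ge 1}$ of mollifiers on $\R^d$ and define, for each $n$,
\[
u_n(\omega,x) := \int_{\R^d} u(\omega,x-y)\rho_n(y)\,dy,
\]
with the convention that $u_n(\omega,x)=0$ whenever $x\mapsto u(\omega,x)$ fails to be locally integrable. A standard Fubini argument shows that $u_n$ is $\mathscr F'\otimes\mathscr B(\R^d)$-measurable; moreover, $\nabla u_n(\omega,x) = \int u(\omega,x-y)\nabla\rho_n(y)\,dy$ is the classical derivative and is likewise jointly measurable. Now, by Lemma~\ref{th:weak_differentiability_function2}, $\omega\in\Gamma$ if and only if, for every $N\ge 1$, one has both
\[
\int_{B_N}|u(\omega,x)|\,dx<\infty
\quad\text{and}\quad
\lim_{m,n\to\infty}\int_{B_N}|\nabla u_n(\omega,x)-\nabla u_m(\omega,x)|\,dx = 0.
\]
By Tonelli's theorem, each of the maps $\omega\mapsto \int_{B_N}|u(\omega,x)|\,dx$ and $\omega\mapsto \int_{B_N}|\nabla u_n(\omega,x)-\nabla u_m(\omega,x)|\,dx$ is $\mathscr F'$-measurable, so $\Gamma$ is obtained from countably many measurable sets via countable intersections and unions and is therefore $\mathscr F'$-measurable.

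Next I would construct $v$ using Lemma~\ref{th:measurability_convergence_in_measure} with $(\mathbb{X},\mathscr B,\mu) = (\R^d,\mathscr B(\R^d),\text{Lebesgue})$, $\mathbb{X}_k = B_k$, $(A,\mathscr A) = (\Omega',\mathscr F')$, and $\mathbb{B} = \R^{d\times d}$. Define
\[
f_n(x,\omega) := \nabla u_n(\omega,x)\,\mathbf{1}_\Gamma(\omega),
\]
which is $\mathscr B(\R^d)\otimes\mathscr F'$-measurable. For $\omega\in\Gamma$, the sequence $f_n(\cdot,\omega)$ converges to $\nabla u(\omega,\cdot)$ in $L^1(B_k)$ (hence in measure on each $B_k$) by the characterization above and the standard identification of the $L^1_{\mathrm{loc}}$-limit of mollified derivatives with the weak derivative; for $\omega\notin\Gamma$, $f_n(\cdot,\omega)\equiv 0$ trivially converges to $0$. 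In either case, the fiberwise limit is $\mathscr B(\R^d)$-measurable, so Lemma~\ref{th:measurability_convergence_in_measure} yields a $\mathscr B(\R^d)\otimes\mathscr F'$-measurable map $\widetilde f$ such that, for every $\omega$, $\widetilde f(x,\omega)$ agrees with this fiberwise limit for a.e.\ $x\in\R^d$. Setting $v(\omega,x):=\widetilde f(x,\omega)$ produces the desired $\mathscr F'\otimes\mathscr B(\R^d)$-measurable function, and for each $\omega\in\Gamma$ one has $v(\omega,x)=\nabla u(\omega,x)$ for a.e.\ $x$.

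The main technical point is the first bullet: one must convert the pointwise (in $\omega$) $L^1_{\mathrm{loc}}$-Cauchy condition coming from Lemma~\ref{th:weak_differentiability_function2} into an $\mathscr F'$-measurable description of $\Gamma$, and one must ensure that the limit obtained from Lemma~\ref{th:measurability_convergence_in_measure} genuinely represents the weak derivative for every $\omega\in\Gamma$ (not just a.e.\ $\omega$). Both are handled cleanly once the mollification $u_n$ is known to be jointly measurable and smooth in $x$, so the core of the argument reduces to routine applications of Fubini–Tonelli and the cited lemmas.
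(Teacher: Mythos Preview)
Your proposal is correct and follows essentially the same route as the paper: mollify, use Lemma~\ref{th:weak_differentiability_function2} to rewrite $\Gamma$ as a countable combination of sets defined by measurable integral functionals, then apply Lemma~\ref{th:measurability_convergence_in_measure} to the sequence $\nabla u_n(\omega,\cdot)\mathbf{1}_\Gamma(\omega)$. The only cosmetic difference is that the paper explicitly isolates the local-integrability set $\Gamma_0=\{\omega:\int_{B_N}|u(\omega,x)|\,dx<\infty\text{ for all }N\}$ and multiplies by $\mathbf{1}_{\Gamma_0}$ \emph{before} mollifying, whereas you absorb this into a convention; since $\Gamma_0\in\mathscr F'$ by Tonelli (which you do invoke) and $\Gamma\subset\Gamma_0$, the two formulations coincide.
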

	
	\begin{proof}
		Let $B_N:=\{x\in\R^d\mid |x|<N\}$ for $N=1,2,\dots$, and define
		\[
		\Gamma_0:=\Big\{\omega\in\Omega'\,\Bigm|\,
		\int_{B_N}|u(\omega,x)|\,dx<\infty \;\text{ for all } N=1,2,\dots\Big\}.
		\]
		Then $\Gamma_0\in\mathscr F'$, and for each $\omega\in\Omega'$, the map $x\mapsto u(\omega,x)\mathbf{1}_{\Gamma_0}(\omega)$ is locally integrable.  
		Let $(\rho_n)_{n\ge1}$ be a sequence of mollifiers on $\R^d$, and set
		\[
		\overline{u}_n(\omega,x):=\int_{\R^d}u(\omega,x-y)\mathbf{1}_{\Gamma_0}(\omega)\rho_n(y)\,dy,\quad \omega\in\Omega',\;x\in\R^d.
		\]
		Then $\overline{u}_n$ is $\mathscr F'\otimes\mathscr B(\R^d)$-measurable, and so is $\nabla\overline{u}_n$, when interpreted as the classical derivative. By Lemma~\ref{th:weak_differentiability_function2}, we have
		\begin{align*}
			\Gamma
			=\Gamma_0\cap
			\Big\{\omega\in\Omega'\Bigm|
			\varlimsup_{k\to\infty}\sup_{n,m\ge k}\int_{B_N}
			|\nabla\overline{u}_n(\omega,x)-\nabla\overline{u}_m(\omega,x)|\,dx=0 \;\text{ for all } N=1,2,\dots 
			\Big\},
		\end{align*}
		and hence $\Gamma\in\mathscr F'$.		
		
		Next, note that for every $\omega\in\Omega'$, the function $x\mapsto \widetilde{u}(\omega,x):=u(\omega,x)\mathbf{1}_{\Gamma}(\omega)$ is weakly differentiable.  
		Define
		\[
		\widetilde{u}_n(\omega,x):=\int_{\R^d}\widetilde{u}(\omega,x-y)\rho_n(y)\,dy.
		\]
		Since $\Gamma\in\mathscr F'$, it follows as before that $\nabla\widetilde{u}_n$ is $\mathscr F'\otimes\mathscr B(\R^d)$-measurable.  
		Moreover, for every $\omega\in\Omega'$, the properties of mollifiers ensure that 
		the sequence $(\nabla\widetilde{u}_n(\omega,\cdot))_{n\ge1}$ converges to 
		$\nabla\widetilde{u}(\omega,\cdot)$ in $L^1(B_N)$ for all $N\ge1$.  
		Hence, by Lemma~\ref{th:measurability_convergence_in_measure}, there exists an $\mathscr F'\otimes\mathscr B(\R^d)$-measurable function $v:\Omega'\times\R^d\to\R^{d\times d}$ such that
		\[
		v(\omega,x)=\nabla\widetilde{u}(\omega,x),\quad \text{a.e.}\;x\in\R^d,\;\;\text{for all }\omega\in\Omega'.
		\]
		For $\omega\in\Gamma$, the right-hand side coincides with $\nabla u(\omega,x)$ for a.e.\;$x$, and the claim follows.
	\end{proof}
	
	\subsection{Chain rule for weak derivatives}
	In general, the chain rule for weak derivatives of a composition $f(u)$ with a Lipschitz function $f:\R^{d_0}\to\R$ and a weakly differentiable function $u:\R^d\to\R^{d_0}$ fails when $d_0>1$ (see \cite{LeoniMorini2007}).  
	However, in the setting below, the chain rule holds in its classical form and can be proved by mollification.
	
	\begin{lem}\label{th:chain_rule_for_weak_derivative_Lipschitz}
		Let $f:\R^d\to\R^d$ be a Lipschitz function and $u:\R^d\to\R^d$ a weakly differentiable function.  
		Assume that the preimage of every Lebesgue null set under $u$ is again a Lebesgue null set. 
		Then $f(u)$ is weakly differentiable, and if $\widetilde{\nabla f}$ is any Borel function satisfying $\widetilde{\nabla f}=\nabla f$ a.e., then
		\[
		\nabla(f(u))=\widetilde{\nabla f}(u)\nabla u,\quad\text{a.e.}
		\]
	\end{lem}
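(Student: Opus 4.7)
The plan is a mollification argument for $f$, combined with the classical chain rule for smooth compositions, where the preimage hypothesis on $u$ plays the crucial role of transferring a.e.\ statements about $\nabla f$ into a.e.\ statements about $\nabla f(u)$.

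First, let $(\rho_n)_{n\ge 1}$ be a sequence of mollifiers on $\R^d$ and set $f_n:=f*\rho_n\in C^\infty(\R^d;\R^d)$. Since $f$ is Lipschitz with some constant $L$, each $f_n$ is Lipschitz with $|\nabla f_n|\le L$, and $f_n\to f$ uniformly on compact sets, while $\nabla f_n(y)\to \widetilde{\nabla f}(y)$ for a.e.\ $y\in\R^d$ by the Lebesgue differentiation theorem (after redefining $\widetilde{\nabla f}$ on a null set if necessary). Since $f_n$ is smooth and $u$ is weakly differentiable, the classical chain rule for Sobolev functions gives that $f_n(u)\in W^{1,1}_{\mathrm{loc}}(\R^d;\R^d)$ with
\[
\nabla(f_n(u))(x)= \nabla f_n(u(x))\,\nabla u(x),\quad \text{a.e.}\;x\in\R^d.
\]

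Next, I pass to the limit in $L^1_{\mathrm{loc}}$. Let $E\subset\R^d$ be a Lebesgue null set such that $\nabla f_n(y)\to\widetilde{\nabla f}(y)$ for every $y\in\R^d\setminus E$. By the standing assumption on $u$, the preimage $u^{-1}(E)$ is also Lebesgue null; hence
\[
\nabla f_n(u(x))\to\widetilde{\nabla f}(u(x)),\quad \text{a.e.}\;x\in\R^d.
\]
Because $|\nabla f_n(u(x))|\le L$ uniformly in $n$ and $x$, and $\nabla u\in L^1_{\mathrm{loc}}(\R^d;\R^{d\times d})$, the dominated convergence theorem yields
\[
\nabla f_n(u)\,\nabla u \longrightarrow \widetilde{\nabla f}(u)\,\nabla u\quad\text{in }L^1_{\mathrm{loc}}(\R^d;\R^{d\times d}).
\]
Similarly, $f_n(u)\to f(u)$ locally uniformly, hence in $L^1_{\mathrm{loc}}$. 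Passing to the limit in the distributional identity
\[
\int_{\R^d} f_n(u(x))\,\partial_i\varphi(x)\,dx= -\int_{\R^d}(\partial_i(f_n(u)))(x)\,\varphi(x)\,dx,\quad \varphi\in C_c^\infty(\R^d),\;\;i=1,\dots,d,
\]
shows that $f(u)$ admits a weak derivative equal to $\widetilde{\nabla f}(u)\,\nabla u$.

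The main obstacle is the a.e.\ convergence $\nabla f_n(u)\to\widetilde{\nabla f}(u)$: this is precisely the point where the general chain rule fails for Lipschitz $f$ and Sobolev $u$ when $d>1$, and it is exactly here that the preimage-null hypothesis on $u$ is invoked. Once that convergence is secured, the uniform bound $|\nabla f_n|\le L$ and the local integrability of $\nabla u$ make the remaining limiting argument entirely routine. Note finally that the right-hand side $\widetilde{\nabla f}(u)\,\nabla u$ is independent of the choice of Borel representative $\widetilde{\nabla f}$, again by the preimage-null hypothesis, so the statement is well-posed.
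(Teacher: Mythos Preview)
Your proof is correct and follows essentially the same mollification strategy as the paper: approximate $f$ by $f_n=f*\rho_n$, invoke the chain rule for the smooth composition $f_n(u)$, use the preimage-null hypothesis to transfer the a.e.\ convergence $\nabla f_n\to\widetilde{\nabla f}$ to a.e.\ convergence of $\nabla f_n(u)$, and pass to the limit by dominated convergence. The only cosmetic difference is that the paper obtains a.e.\ convergence of $\nabla f_n$ by extracting a subsequence from the $L^1_{\mathrm{loc}}$ convergence, whereas you appeal directly to convergence at Lebesgue points; both are valid.
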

	
	\begin{proof}
		First note that $f(u)$ is locally integrable.  
		Let $(\rho_n)_{n\ge1}$ be a sequence of mollifiers on $\R^d$ and set $f_n:=f*\rho_n$.  
		Each $f_n$ is of class $C^1$ and Lipschitz, and by an argument similar to that in \cite[Proposition~9.5]{Brezis2011} (which is stated for scalar-valued $u$), we have, for every $\varphi\in C_c^\infty(\R^d;\R^d)$,
		\begin{align}\label{eq:chain_rule_for_weak_derivative_Lipschitz}
			\int_{\R^d}f_n(u(x))\divg\varphi(x)\,dx
			=-\int_{\R^d}\nabla f_n(u(x))\nabla u(x)\varphi(x)\,dx.
		\end{align}
		Since $f$ is Lipschitz, it is weakly differentiable and its weak derivative coincides a.e.~with the classical one, hence with $\widetilde{\nabla f}$.  
		Then, by standard properties of mollifiers, it follows that 
		$\|\nabla f_n-\widetilde{\nabla f}\|_{L^1(B)}\to0$ as $n\to\infty$ for every bounded open set $B\subset\R^d$.  
		Thus, after extracting a subsequence, we may assume $\nabla f_n\to\widetilde{\nabla f}$ a.e.~on $\R^d$.  
		By the assumption on $u$, we then have $\nabla f_n(u(x))\to\widetilde{\nabla f}(u(x))$ for a.e.\;$x\in\R^d$.
		Passing to the limit $n\to\infty$ in \eqref{eq:chain_rule_for_weak_derivative_Lipschitz}, 
		we use the uniform convergence $f_n\to f$ and dominated convergence 
		(via the uniform boundedness of $(\nabla f_n)_{n\ge1}$) to obtain
		\[
		\int_{\R^d}f(u(x))\divg\varphi(x)\,dx
		=-\int_{\R^d}\widetilde{\nabla f}(u(x))\nabla u(x)\varphi(x)\,dx,
		\]
		which proves the claim.
	\end{proof}

	\section{Proof of Lemma~\ref{th:estim_of_approximation_of_quadratic_varitaions_for_g(r,X)}}\label{section:proof_of_estim_of_approximation_of_quadratic_varitaions_for_g(r,X)}
	We begin with the following auxiliary lemma concerning approximations of quadratic variations.
	
	\begin{lem}\label{th:estim_of_approximation_of_quadratic_varitaions2}
		Let $0\le t<s\le T$, and let $t=r_0<r_1<\cdots<r_n=s$ be a partition of $[t,s]$.
		\begin{enumerate}
			\item 
			Let $(A_r)_{r\in[0,T]}$ be a continuous $\R^{d_0}$-valued process of finite variation.  
			Then, for any $p>0$,
			\[
			\E\bigg[\bigg(\sum_{j=0}^{n-1}|A_{r_{j+1}}-A_{r_j}|^2\bigg)^p\bigg]
			\le \E[\|A\|_s^{2p}],
			\]
			where $(\|A\|_r)_{r\in[0,T]}$ denotes the total variation process of $(A_r)_{r\in[0,T]}$.
			
			\item 
			Let $(M_r)_{r\in[0,T]}$ be a continuous $d_0$-dimensional local martingale.  
			Then, for any $p>1/2$, there exists a constant $c_p>0$, depending only on $p$ and $d_0$, such that
			\[
			\E\bigg[\bigg(\sum_{j=0}^{n-1}|M_{r_{j+1}}-M_{r_j}|^2\bigg)^p\bigg]
			\le c_p\,\E[\langle M\rangle_s^{p}],
			\]
			where $\langle M\rangle_r := \sum_{i=1}^{d_0}\langle M^i\rangle_r$, with $M_r=(M_r^1,\dots,M_r^{d_0})$.
		\end{enumerate}
	\end{lem}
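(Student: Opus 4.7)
\medskip
\noindent
\textbf{Proof proposal.} For part (1), the plan is to establish the pointwise inequality $\sum_{j=0}^{n-1}|A_{r_{j+1}}-A_{r_j}|^2\le \|A\|_s^2$ and then raise to the $p$-th power and take expectation. This pointwise bound follows from $\sum_j \alpha_j^2 \le (\max_j \alpha_j)(\sum_j \alpha_j)$ applied to $\alpha_j=|A_{r_{j+1}}-A_{r_j}|$: the sum is bounded by $\|A\|_s$ by the very definition of total variation on $[0,s]$, while each increment satisfies $|A_{r_{j+1}}-A_{r_j}|\le \|A\|_{r_{j+1}}-\|A\|_{r_j}\le \|A\|_s$.

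For part (2), the strategy is to reduce to a single component and then combine a discrete It\^o identity with the Burkholder--Davis--Gundy (BDG) inequality. Writing $V_n=\sum_{i=1}^{d_0}V^i_n$ with $V^i_n=\sum_j(M^i_{r_{j+1}}-M^i_{r_j})^2$, the inequality $(\sum_i a_i)^p\le d_0^{(p-1)^+}\sum_i a_i^p$ together with $\langle M^i\rangle_s\le \langle M\rangle_s$ reduces matters to proving $\E[(V^i_n)^p]\le c_p\E[\langle M^i\rangle_s^p]$ for a fixed one-dimensional continuous local martingale $M^i$. A standard localization (stopping when $|M^i|$ or $\langle M^i\rangle$ exceeds $k$ and sending $k\to\infty$ via Fatou) allows us to assume $M^i$ is bounded; translating, we may further assume $M^i_t=0$.

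The key algebraic identity
\[
V^i_n = (M^i_s)^2 - 2\sum_j M^i_{r_j}(M^i_{r_{j+1}}-M^i_{r_j})
\]
identifies the last sum as the stochastic integral $I^i_n:=\int_t^s(M^i)^\Delta_r\,dM^i_r$, where $(M^i)^\Delta_r := M^i_{r_j}$ for $r\in(r_j,r_{j+1}]$. Using $(a+b)^p\le 2^{(p-1)^+}(a^p+b^p)$ and taking expectation yields $\E[(V^i_n)^p]\le C_p\bigl(\E[(M^i_s)^{2p}] + \E[|I^i_n|^p]\bigr)$. The first term is controlled by BDG directly: $\E[(M^i_s)^{2p}]\le c_{2p}\E[\langle M^i\rangle_s^p]$. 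For the second, BDG gives $\E[|I^i_n|^p]\le c_p\E[\langle I^i\rangle_s^{p/2}]$, and since $\langle I^i\rangle_s=\int_t^s((M^i)^\Delta_r)^2\,d\langle M^i\rangle_r\le (M^{i,*}_s)^2\langle M^i\rangle_s$ with $M^{i,*}_s:=\sup_{r\in[t,s]}|M^i_r|$, a Cauchy--Schwarz estimate followed by a final BDG application to $M^{i,*}_s$ produces $\E[\langle I^i\rangle_s^{p/2}]\le \E[(M^{i,*}_s)^{2p}]^{1/2}\E[\langle M^i\rangle_s^p]^{1/2}\le c_p\E[\langle M^i\rangle_s^p]$.

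The main technical point is the careful chaining of three BDG applications (to $M^i_s$, to the discrete stochastic integral $I^i_n$, and to the maximal function $M^{i,*}_s$) to end with a clean $\E[\langle M\rangle_s^p]$ bound rather than a mixed quantity. The dichotomy $p\in(1/2,1]$ versus $p\ge 1$ is subsumed uniformly into the constant $C_p=2^{(p-1)^+}$, and the hypothesis $p>1/2$ ensures $2p>1$, placing each BDG invocation comfortably within its standard range.
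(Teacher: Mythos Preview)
Your proof is correct. Part~(1) is essentially the same pointwise bound as the paper's, just phrased slightly differently.

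For part~(2), your route differs from the paper's. The paper reduces to $d_0=1$, $M_0=0$, and then simply cites the classical discrete-time Burkholder inequality (which bounds $\E[(\sum_j d_j^2)^{p}]$ by $\E[|S_n|^{2p}]$ for martingale increments $d_j$, valid for $2p>1$) followed by one continuous BDG to pass from $\E[|M_s|^{2p}]$ to $\E[\langle M\rangle_s^p]$. You instead reconstruct this step by hand via the discrete It\^o identity $V_n=(M_s)^2-2I_n$ and three continuous BDG applications. Your argument is more self-contained and avoids an external citation; in fact, since continuous-time BDG holds for all exponents $p>0$, your proof actually yields the inequality for every $p>0$, not only $p>1/2$ --- so your closing remark that $p>1/2$ is needed ``to place each BDG invocation within its standard range'' is harmless but inaccurate. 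The paper's restriction $p>1/2$ genuinely stems from the discrete Burkholder inequality it invokes.
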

	
	\begin{proof}
		(1)  
		Since 
		$
		|A_{r_{j+1}}-A_{r_j}|^2
		\le (\|A\|_{r_{j+1}}-\|A\|_{r_j})^2
		\le (\|A\|_{r_{j+1}}-\|A\|_{r_j})\|A\|_s,
		$ 
		the desired estimate follows immediately.
		
		(2)  
		It suffices to prove the claim in the case $d_0=1$ and $M_0=0$.  
		In this case, the result follows from the classical Burkholder inequality (see, e.g., \cite[Section~11.2, Theorem~1]{ChowTeicher1997}) combined with the standard BDG inequality.
	\end{proof}
	
	We now prove Lemma~\ref{th:estim_of_approximation_of_quadratic_varitaions_for_g(r,X)}.
	
	\begin{proof}[Proof of Lemma~\ref{th:estim_of_approximation_of_quadratic_varitaions_for_g(r,X)}]
		Throughout the proof, we write $\alpha\lesssim\beta$ to mean that there exists a constant $C>0$, depending only on $p,d,\|K\|_{L^1},L$, and $T$, such that $\alpha\le C\beta$.
		Set
		\[
		G_{r}^{t,x,x'}:= g(r,X_{r}^{t,x})-g(r,X_{r}^{t,x'}).
		\]
		By the Schwarz inequality and Lemma~\ref{th:estim_of_approximation_of_quadratic_varitaions2},
		\begin{align}\label{eq:estim_of_approximation_of_quadratic_varitaions_for_g(r,X)}
			\E[|V_k^{\Delta,t,x}-V_k^{\Delta,t,x'}|^p]
			&\le \E\bigg[\bigg(\sum_{j=0}^{n-1}|G_{r_{j+1}}^{t,x,x'}-G_{r_j}^{t,x,x'}|^2\bigg)^{p}\bigg]^{1/2}
			\E\bigg[\bigg(\sum_{j=0}^{n-1}|W_{r_{j+1}}^k-W_{r_j}^k|^2\bigg)^{p}\bigg]^{1/2}\notag \\
			&\lesssim \E\bigg[\bigg(\sum_{j=0}^{n-1}|G_{r_{j+1}}^{t,x,x'}-G_{r_j}^{t,x,x'}|^2\bigg)^{p}\bigg]^{1/2}.
		\end{align}
		Next, by the fundamental theorem of calculus,
		\[
		G_r^{t,x,x'}
		=\int_0^1\langle\nabla g(r,\theta X_r^{t,x}+(1-\theta)X_r^{t,x'}),\,X_r^{t,x}-X_r^{t,x'}\rangle\,d\theta,
		\]
		and hence, using \eqref{eq:estim_of_approximation_of_quadratic_varitaions_for_g(r,X)_2}, we have
		\begin{align*}
			|G_{r_{j+1}}^{t,x,x'}-G_{r_j}^{t,x,x'}|
			&\le L\big(|r_{j+1}-r_j|+|X_{r_{j+1}}^{t,x}-X_{r_j}^{t,x}|+|X_{r_{j+1}}^{t,x'}-X_{r_j}^{t,x'}|\big)|X_{r_{j+1}}^{t,x}-X_{r_{j+1}}^{t,x'}|\\
			&\quad+L|X_{r_{j+1}}^{t,x}-X_{r_{j+1}}^{t,x'}-(X_{r_j}^{t,x}-X_{r_j}^{t,x'})|\\
			&=:A_{1,j}+A_{2,j}.
		\end{align*}
		Set 
		\[
		V_{1,p}:=\E\bigg[\bigg(\sum_{j=0}^{n-1}|A_{1,j}|^2\bigg)^p\bigg],\quad
		V_{2,p}:=\E\bigg[\bigg(\sum_{j=0}^{n-1}|A_{2,j}|^2\bigg)^p\bigg].
		\]
		By Lemma~\ref{th:estim_of_approximation_of_quadratic_varitaions2} and Lemma~\ref{th:SDE_standard_estim},
		\begin{align}
			&\begin{aligned}[b]\label{eq:estim_of_approximation_of_quadratic_varitaions_for_g(r,X)_3}
				\E\bigg[\bigg(\sum_{j=0}^{n-1}|X_{r_{j+1}}^{t,x}-X_{r_j}^{t,x}|^2\bigg)^{2p}\bigg]
				&\lesssim \E\bigg[\bigg(\int_t^s|b(r,X_r^{t,x})|\,dr\bigg)^{4p}
				+\bigg(\int_t^s|\sigma(r,X_r^{t,x})|^2\,dr\bigg)^{2p}\bigg]\\
				&\lesssim \E\Big[\Big(1+\sup_{r\in[t,T]}|X_r^{t,x}|\Big)^{4p}\Big]
				\lesssim (1+|x|)^{4p},
			\end{aligned}\\
			& \E\bigg[\bigg(\sum_{j=0}^{n-1}|r_{j+1}-r_j|^2\bigg)^{2p}\bigg]\le T^{4p},\quad
			\E\Big[\sup_{r\in[t,T]}|X_r^{t,x}-X_r^{t,x'}|^{4p}\Big]
			\lesssim |x-x'|^{4p},
			\notag
		\end{align}
		and the estimate \eqref{eq:estim_of_approximation_of_quadratic_varitaions_for_g(r,X)_3} also holds with $x$ replaced by $x'$.  
		Therefore,
		\begin{align*}
			V_{1,p}
			&\lesssim
			\E\bigg[\bigg(\sum_{j=0}^{n-1}|r_{j+1}-r_j|^2\bigg)^{2p}
			+\bigg(\sum_{j=0}^{n-1}|X_{r_{j+1}}^{t,x}-X_{r_j}^{t,x}|^2\bigg)^{2p}
			+\bigg(\sum_{j=0}^{n-1}|X_{r_{j+1}}^{t,x'}-X_{r_j}^{t,x'}|^2\bigg)^{2p}\bigg]^{1/2}\\
			&\quad\times
			\E\Big[\sup_{r\in[t,T]}|X_r^{t,x}-X_r^{t,x'}|^{4p}\Big]^{1/2}
			\lesssim (1+|x|+|x'|)^{2p}|x-x'|^{2p}.
		\end{align*}
		For $V_{2,p}$, again by Lemma~\ref{th:estim_of_approximation_of_quadratic_varitaions2} and Lemma~\ref{th:SDE_standard_estim},
		\begin{align*}
			V_{2,p}
			&\lesssim
			\E\bigg[\bigg(\sum_{j=0}^{n-1}|X_{r_{j+1}}^{t,x}-X_{r_{j+1}}^{t,x'}
			-(X_{r_j}^{t,x}-X_{r_j}^{t,x'})|^2\bigg)^p\bigg]\\
			&\lesssim
			\E\bigg[\bigg(\int_t^s|b(r,X_r^{t,x})-b(r,X_r^{t,x'})|\,dr\bigg)^{2p}
			+\bigg(\int_t^s|\sigma(r,X_r^{t,x})-\sigma(r,X_r^{t,x'})|^2\,dr\bigg)^{p}\bigg]\\
			&\lesssim
			\E\Big[\sup_{r\in[t,T]}|X_r^{t,x}-X_r^{t,x'}|^{2p}\Big]
			\lesssim |x-x'|^{2p}.
		\end{align*}
		Hence,
		\begin{align*}
			\E\bigg[\bigg(\sum_{j=0}^{n-1}|G_{r_{j+1}}^{t,x,x'}-G_{r_j}^{t,x,x'}|^2\bigg)^p\bigg]
			\lesssim
			V_{1,p} + V_{2,p}
			\lesssim (1+|x|+|x'|)^{2p}|x-x'|^{2p}.
		\end{align*}
		Returning to \eqref{eq:estim_of_approximation_of_quadratic_varitaions_for_g(r,X)}, we obtain the desired estimate.
	\end{proof}

	\section{Proof of Lemma~\ref{th:differentiability_of_sol_of_SDE}}\label{section:proof_of_th:differentiability_of_sol_of_SDE}
	To prove Lemma~\ref{th:differentiability_of_sol_of_SDE}, we use the following auxiliary result, which is a slight modification of \cite[Lemma~2.4]{Zhang2016}. Its proof is omitted.
	
	\begin{lem}\label{th:criterion_of_weak_differentiability}
		Let $ (X(x))_{x\in \R^d} $ and $ (X_n(x))_{x\in \R^d},\,n=1,2,\dots $ be $ \R^d $-valued $ \mathscr F \otimes \mathscr B(\R^d) $-measurable random fields such that for each $ n $, the map $ x\mapsto X_n(x) $ is almost surely continuous and weakly differentiable.
		Suppose furthermore that for every $ x\in \R^d $, we have $ X_n(x)\to X(x) $ in probability, and that for some $ p>1 $,
		\[
		\sup_n\Big(\E[|X_n(0)|^p] + \esssup_{x\in\R^d}\E[|\widetilde{\nabla X}_n(x)|^p]\Big)<\infty,
		\]
		where $ (\widetilde{\nabla X}_n(x))_{x\in \R^d} $ denotes the $ \mathscr F\otimes \mathscr B(\R^d) $-measurable weak derivative of $ (X_n(x))_{x\in \R^d} $ (see Definition~\ref{rem:jointly_mble_version_of_weak_derivative_process}).
		Then $ x\mapsto X(x) $ is almost surely weakly differentiable, and
		\begin{align*}
			\esssup_{x\in\R^d}\E\big[|\widetilde{\nabla X}(x)|^p\big ]
			\leq \sup_n \esssup_{x\in\R^d}\E\big [|\widetilde{\nabla X}_n(x)|^p\big ].
		\end{align*}
	\end{lem}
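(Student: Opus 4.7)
My approach combines reflexivity of $L^p$ (for $p>1$) with the distributional characterization of weak derivatives: extract a subsequential weak limit $Y$ of the $\widetilde{\nabla X}_n$, and identify $Y$ with the weak derivative of $X$ by passing to the limit in the integration-by-parts identity. Throughout, set $M:=\sup_n\esssup_x\E[|\widetilde{\nabla X}_n(x)|^p]$ and $B_R:=\{x\in\R^d\mid |x|<R\}$.

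First I would upgrade the hypothesis to a uniform local $L^p$ bound on $(X_n)_n$ itself. Mollifying $X_n$ in $x$ gives a smooth approximation $X_n^\epsilon$ with $\nabla X_n^\epsilon=\widetilde{\nabla X}_n*\rho_\epsilon$, and Jensen's inequality together with the hypothesis yields $\E[|\nabla X_n^\epsilon(y)|^p]\le M$ for every $y\in\R^d$. Applying the ray-integration identity available for smooth fields, taking expectations, and letting $\epsilon\downarrow 0$ via Fatou's lemma produces the pointwise estimate $\E[|X_n(x)|^p]\le 2^{p-1}(\E[|X_n(0)|^p]+M|x|^p)$ for every $x\in\R^d$, uniformly in $n$. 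Consequently both $(X_n)_n$ and $(\widetilde{\nabla X}_n)_n$ are bounded sequences in the reflexive Banach space $L^p(\Omega\times B_R)$ for every $R>0$.

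By weak compactness and a diagonal argument over $R\in\mathbb N$, I can extract a subsequence (still indexed by $n$) along which $\widetilde{\nabla X}_n\rightharpoonup Y$ weakly in $L^p(\Omega\times B_R;\R^{d\times d})$ for every $R>0$. I also need $X_n\rightharpoonup X$ weakly in $L^p(\Omega\times B_R;\R^d)$: for bounded measurable $\eta(\omega,x)$, convergence in probability $X_n(x)\to X(x)$ combined with uniform integrability (here $p>1$ is essential) gives pointwise convergence $\E[X_n(x)\cdot\eta(\omega,x)]\to\E[X(x)\cdot\eta(\omega,x)]$ in $x$, and the uniform $L^p$ bound lets one pass this through $\int_{B_R}dx$ by dominated convergence; density of bounded functions in $L^{p'}(\Omega\times B_R)$ then yields the claimed weak convergence. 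For each $\phi\in C_c^\infty(\R^d;\R^d)$ and each bounded $\mathscr F$-measurable $\xi:\Omega\to\R$, the weak-derivative identity
\begin{align*}
\E\Big[\xi\int_{\R^d} X_n(x)\cdot\divg\phi(x)\,dx\Big]=-\E\Big[\xi\int_{\R^d}\widetilde{\nabla X}_n(x)\phi(x)\,dx\Big]
\end{align*}
then passes to the limit in both sides, giving the same identity for $(X,Y)$. Arbitrariness of $\xi$ and separability of $C_c^\infty(\R^d;\R^d)$ imply that, outside a single $\Prob$-null set, $X(\omega,\cdot)$ is weakly differentiable with $\widetilde{\nabla X}=Y$ a.e.

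Finally, weak lower semicontinuity of the $L^p$ norm on each ball gives $\int_{B_R(x_0)}\E[|Y(x)|^p]\,dx\le M|B_R(x_0)|$ for every $x_0\in\R^d$ and $R>0$, and Lebesgue differentiation applied to the locally integrable function $x\mapsto\E[|Y(x)|^p]$ upgrades this to $\E[|\widetilde{\nabla X}(x)|^p]\le M$ for a.e.\ $x$, which is the claimed essential-supremum bound. The chief technical difficulty is the identification step: showing that the $L^p$-weak limit of $(X_n)_n$ really coincides with the pointwise-in-probability limit $X$, rather than some other field, is where the hypothesis $p>1$ enters essentially---through both reflexivity and the uniform integrability of products $X_n\cdot\eta$.
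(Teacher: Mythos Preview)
The paper does not give its own proof of this lemma; it states that the result is a slight modification of \cite[Lemma~2.4]{Zhang2016} and omits the argument. Your proof is correct and follows the natural weak-compactness strategy one would expect here: uniform $L^p$ bounds, a subsequential weak limit of the gradients, passage to the limit in the distributional identity, and weak lower semicontinuity for the final norm bound.

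One small cleanup in your first step: to derive the uniform bound $\E[|X_n(x)|^p]\le 2^{p-1}(\E[|X_n(0)|^p]+M|x|^p)$, it is cleaner to first establish $\E[|X_n^\epsilon(x)-X_n^\epsilon(0)|^p]\le M|x|^p$ from the ray-integration identity and then apply Fatou as $\epsilon\downarrow0$ (using $X_n^\epsilon\to X_n$ pointwise by continuity). As written, your phrasing suggests bounding $\E[|X_n^\epsilon(0)|^p]$ in terms of $\E[|X_n(0)|^p]$ first, but Jensen only gives $\E[|X_n^\epsilon(0)|^p]\le\int\E[|X_n(-y)|^p]\rho_\epsilon(y)\,dy$, which already requires the estimate you are trying to prove. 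This is only a matter of ordering; the argument is sound.
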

	
	\begin{rem}
		We use only joint measurability (not continuity) of the limit random field and work with the $\mathscr P\otimes\mathscr B(\R^d)$-measurable weak derivative as defined in Definition~\ref{rem:jointly_mble_version_of_weak_derivative_process}.
		These adjustments do not affect the proof. (While \cite{Zhang2016} is stated on the classical Wiener space, the argument itself carries over without modification.)
	\end{rem}
	
	\begin{proof}[Proof of Lemma~\ref{th:differentiability_of_sol_of_SDE}]
		Let $ (\rho_n)_{n\geq 1} $ be a sequence of mollifiers on $ \R^d $, and define
		\begin{align*}
			\overline{b}_n(r,x):=\int_{\R^d}b(r,x-y)\rho_n(y)\,dy,\quad
			\overline{\sigma}_n(r,x):= \int_{\R^d}\sigma(r,x-y)\rho_n(y)\,dy
		\end{align*}
		as well as $ b_n(r,x):=\overline{b}_n(r,x)\mathbf{1}_{\{K\leq n\}}(r)$ and $ \sigma_n(r,x):=\overline{\sigma}_n(r,x)\mathbf{1}_{\{K\leq n\}}(r) $. Then
		\begin{align}
			\begin{aligned}
				&|b_n(r,x)|\leq (K(r)\wedge n)(2+|x|), \quad |b_n(r,x)-b_n(r,x')|\leq (K(r)\wedge n)|x-x'|,\\
			&|\sigma_n(r,x)|\leq (K(r)\wedge n)^{1/2}(2+|x|),\quad
			|\sigma_n(r,x)-\sigma_n(r,x')|\leq (K(r)\wedge n)^{1/2}|x-x'|
			\end{aligned}\label{eq:differentiability_of_sol_of_SDE2}
		\end{align}
		and setting $ L_n=(n+n^{1/2})\|\nabla \rho_n\|_{L^1} $, we have
		\begin{align*}
			|\nabla b_n(r,x)- \nabla b_n(r,x')|\leq L_n|x-x'|,\quad
			|\nabla \sigma_n(r,x)- \nabla \sigma_n(r,x')|\leq L_n|x-x'|.
		\end{align*}
		
		For each $ x\in \R^d $, let $ (X_s^{x,n})_{s\in[t,T]} $ denote the solution to the SDE~\eqref{eq:differentiability_of_sol_of_SDE0} with coefficients $ b_n$ and $\sigma_n $.
		Then, by applying \cite[Chapter II, Theorems 2.2 and 3.1]{Kunita1984}\footnote{In \cite{Kunita1984}, continuity in time is also assumed for the coefficients, but all estimates remain valid under our current assumptions, and the conclusions of the theorems still hold.}, we obtain a modification $ (\widetilde{X}_s^{x,n})_{s\in[t,T],\,x\in \R^d} $ of $ (X_s^{x,n})_{s\in[t,T],\,x\in \R^d} $ satisfying the following properties:
		(1) for almost every $ \omega\in \Omega $, the map $ (s,x)\mapsto \widetilde{X}_s^{x,n}(\omega) $ is continuous and for each $ s $, the map $ x\mapsto \widetilde{X}_s^{x,n}(\omega) $ is differentiable;
		(2) for each $ x\in \R^d $ and $ i=1,\dots,d $, the partial derivative $ (\partial_i \widetilde{X}_s^{x,n})_{s\in[t,T]} $ satisfies the SDE
		\[
		\partial_i \widetilde{X}_s^{x,n} = e_i + \int_t^s \nabla b_n(r,\widetilde{X}_r^{x,n})\partial_i \widetilde{X}_r^{x,n}\, dr + \int_t^s \nabla \sigma_n(r,\widetilde{X}_r^{x,n})\partial_i \widetilde{X}_r^{x,n}\,dW_r, \quad s\in [t,T],
		\]
		where $ e_i\in \R^d $ denotes the unit vector with $1$ in the $i$-th component.
		Moreover, this modification can be chosen so that property (1) holds for all $ \omega\in\Omega $.
		
		Now, from~\eqref{eq:differentiability_of_sol_of_SDE2}, we in particular have for any $ y\in \R^d $,
		\begin{align*}
			|\nabla b_n(r,x)y|\leq K(r)|y|,\quad
			|\nabla \sigma_n(r,x)y|\leq K(r)^{1/2}|y|.
		\end{align*}
		Hence, by Lemma~\ref{th:SDE_apriori1}, for each $ p\geq 2 $, there exist constants $ C_p^{(1)},C_p^{(2)}>0 $ depending only on $ p,\|K\|_{L^1} $ such that for every $ s\in[t,T] $,
		\begin{align*}
			\E [|\widetilde{X}_s^{x,n}|^p]
			\leq C_p^{(1)}(1+|x|^p),\quad
			\E[|\partial_i \widetilde{X}_s^{x,n}|^p]
			\leq C_p^{(2)}.
		\end{align*}
		Moreover, by properties of mollifiers and Lemma~\ref{th:SDE_stability}, we have $ \widetilde{X}_s^{x,n} \to X_s^x $ in probability as $ n\to\infty $ for each $ x\in\R^d $. Hence, by Lemma~\ref{th:criterion_of_weak_differentiability}, the map $ x\mapsto X_s^x $ is weakly differentiable almost surely, and
		\[
		\E\big[|\widetilde{\nabla X}{}_s^x|^p\big]
		\leq \sup_n \esssup_{x'\in\R^d}\E\big[|{\nabla \widetilde{X}}{}_s^{x',n}|^p\big]
		\leq d^{p/2} \sum_{i=1}^d\sup_n\sup_{x'\in\R^d}\E\big[|{\partial_i \widetilde{X}}{}_s^{x',n}|^p\big]
		\leq d^{p/2+1}C_p^{(2)}
		\]
		for almost every $x\in \R^d$.
		The estimate for $ \E [|X_s^x|^p ] $ follows from Lemma~\ref{th:SDE_apriori1}, and the final claim follows from Fubini's theorem.
	\end{proof}

	\section{Supplement to Step~2 in the proof of Theorem~\ref{th:main_theorem_locally_Lipschitz}}\label{section:appendix_to_proof_of_main_theorem}
	We verify here that each condition stated in Step~2 of the proof of Theorem~\ref{th:main_theorem_locally_Lipschitz} indeed holds.
	
	\begin{proof}[Proof of Step~2]
		Note that,  by the definition of $\gamma$ (see the beginning of Step~2), we have
		\[
		(1+|x|)|\nabla\chi_n(x)|\le\gamma,\quad
		(1+|x|)^2|\nabla^2\chi_n(x)|\le\gamma,\quad x\in \R^d.
		\]
		
		\ref{item:main_theorem_locally_Lipschitz1}  
		The global Lipschitz continuity of the coefficients follows from Lemma~\ref{th:approximation_of_locally_Lipschitz_func} below.  
		We only check the linear growth condition for $\widehat{b}^n$. By \ref{item:asummption_of_main_theorem_2} and \ref{item:asummption_of_main_theorem_4}, we have  
		\begin{align*}
			|\widehat{b}^n(r,x)|
			&\le |b(r,x)| + |\widehat{\sigma}(r,x)||\chi_n(x)|^2 + |\sigma(r,x)|^2|\nabla\chi_n(x)|\\
			&\le (K(r)+\widehat{K}_{2n}(r)+\gamma K(r))(1+|x|)
			\le \widetilde{K}_n(r)(1+|x|).
		\end{align*}
		
		\ref{item:main_theorem_locally_Lipschitz2}  
		The case $B=\{x\in\R^d\mid |x|\le n\}$ is almost immediate, so we prove only the case $B=\R^d$. 
		For \eqref{eq:main_theorem_locally_Lipschitz02}, it suffices to check the condition for $\widehat{b}^n$.  
		By \eqref{eq:main_theorem_locally_Lipschitz'} and the linear growth condition on $\sigma$ stated in \ref{item:asummption_of_main_theorem_2}, for any $r\in[0,T]$ and $x\in\R^d$,
		\begin{align*}
			|\langle\widehat{b}^n(r,x),\nabla\rho(x)\rangle|
			&\le |\langle b(r,x)-\widehat{\sigma}(r,x),\nabla\rho(x)\rangle|
			+|\sigma(r,x)||\nabla\chi_n(x)|\cdot|\sigma(r,x)||\nabla\rho(x)|\\
			&\le (\widetilde{K}(r)+\gamma K(r)^{1/2}\widetilde{K}(r)^{1/2})\rho(x)
			\le \widetilde{K}_n(r)\rho(x).
		\end{align*}
		
		Next, we prove \eqref{eq:main_theorem_locally_Lipschitz03}.  
		By direct computation of $\tr\nabla\widehat{b}^n$ and $\tr[(\nabla\sigma_{(k)}^n)^2]$, one finds that for a.e.\;$r\in[0,T]$ and a.e.\;$x$,
		\begin{align*}
			&\Big|-\tr\nabla\widehat{b}^n(r,x)
			-\frac12\sum_{k=1}^{d'}\tr[(\nabla\sigma_{(k)}^n(r,x))^2]\Big|\\
			&\le\Big|-\tr\nabla b(r,x)
			+\tr\nabla\widehat{\sigma}(r,x)
			-\frac12\sum_{k=1}^{d'}\tr[(\nabla\sigma_{(k)}(r,x))^2]\Big|\chi_n(x)^2
			+V^n(r,x),
		\end{align*}
		where, setting $\widecheck{\sigma}_j(r,x):=\sum_{i,k}\sigma_{jk}(r,x)\partial_i\sigma_{ik}(r,x)$ and $\widecheck{\sigma}:=(\widecheck{\sigma}_1,\dots,\widecheck{\sigma}_d)$,
		\begin{align*}
			V^n(r,x):={}&
			\Big|\langle -b(r,x)+\widehat{\sigma}(r,x),\nabla\chi_n(x)\rangle\chi_n(x)
			+\langle\widecheck{\sigma}(r,x),\nabla\chi_n(x)\rangle\chi_n(x)\\
			&\quad+\frac12|\sigma(r,x)^\top\nabla\chi_n(x)|^2
			+\tr[\sigma(r,x)\sigma(r,x)^\top\nabla^2\chi_n(x)]\chi_n(x)\Big|.
		\end{align*}
		Note that $|\widecheck{\sigma}(r,x)|\le|\sigma(r,x)|\big(\sum_{k=1}^{d'}(\tr\nabla\sigma_{(k)}(r,x))^2\big)^{1/2}\le|\sigma(r,x)|\widetilde{K}(r)^{1/2}$.  
		Then, using the linear growth assumptions \ref{item:asummption_of_main_theorem_2} and \ref{item:asummption_of_main_theorem_4}, we obtain
		\begin{align*}
			V^n(r,x)
			&\le\gamma\big(2K(r)+2\widehat{K}_{2n}(r)+K(r)^{1/2}\widetilde{K}(r)^{1/2}\big)
			+\frac12\gamma^2K(r)+\gamma K(r)\\
			&\le\Big(\frac12\gamma^2+4\gamma\Big)K(r)
			+\gamma\widetilde{K}(r)
			+2\gamma\widehat{K}_{2n}(r).
		\end{align*}
		Combining this with \eqref{eq:main_theorem_locally_Lipschitz}, we find that for a.e.\;$r\in[0,T]$ and a.e.\;$x$,
		\begin{align*}
			&\Big|-\tr\nabla\widehat{b}^n(r,x)
			-\frac12\sum_{k=1}^{d'}\tr[(\nabla\sigma_{(k)}^n(r,x))^2]\Big|\\
			&\le\widetilde{K}(r) +\Big(\frac12\gamma^2+4\gamma\Big)K(r)
			+\gamma\widetilde{K}(r)
			+2\gamma\widehat{K}_{2n}(r)
			\le\widetilde{K}_n(r).
		\end{align*}
		The estimate for $\sum_{k=1}^{d'}(\tr\nabla\sigma_{(k)}^n(r,x))^2$ can be shown in a similar way.
		
		\ref{item:main_theorem_locally_Lipschitz4}  
		The first part follows from Lemma~\ref{th:approximation_of_locally_Lipschitz_func} below, and the second part is immediate.
	\end{proof}
	
	\begin{lem}\label{th:approximation_of_locally_Lipschitz_func}
		Let $f$ be a function on $\R^d$ with values in $\R^d$ or $\R^{d\times d'}$.  
		Assume that for every $N=1,2,\dots$, there exist constants $\kappa_N,\kappa_N'\ge0$ such that
		\[
		|f(x)|\le\kappa_N'(1+|x|),\quad
		|f(x)-f(x')|\le\kappa_N|x-x'|,\qquad |x|,|x'|\le N.
		\]
		Let $\chi\in C_c^\infty(\R^d)$ satisfy $0\le\chi\le1$ and $\chi(x)=1$ for $|x|\le1$, $\chi(x)=0$ for $|x|\ge2$.  
		For $n=1,2,\dots$, set $\chi_n(x):=\chi(x/n)$ and $f_n(x):=f(x)\chi_n(x)$.  
		Then the following estimates hold:
		\begin{align*}
			&|f_n(x)-f_n(x')|
			\le(\sqrt{d}\kappa_{2n+1}+3\|\nabla\chi\|_\infty\kappa_{2n}')|x-x'|,\quad x,x'\in\R^d,\\
			&|f_n(x)-f_n(x')|
			\le(\sqrt{d}\kappa_{N+1}+3\|\nabla\chi\|_\infty\kappa_{N}')|x-x'|,\quad |x|,|x'|\le N.
		\end{align*}
		Moreover, when $f$ takes values in $\R^{d\times d'}$, we define $\widehat{f}_n(x):=f(x)f(x)^\top\nabla\chi_n(x)\chi_n(x)$. Then
		\[
		|\widehat{f}_n(x)-\widehat{f}_n(x')|
		\le\big(6\sqrt{d}\|\nabla\chi\|_\infty\kappa_{2n+1}\kappa_{2n}'
		+9(\|\nabla^2\chi\|_\infty+\|\nabla\chi\|_\infty^2)(\kappa_{2n}')^2\big)|x-x'|,
		\quad x,x'\in\R^d.
		\]
	\end{lem}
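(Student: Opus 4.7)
The plan is to prove both families of Lipschitz bounds by direct telescoping of the products, combined with the basic observation that $\chi_n$ is supported in $B_{2n}:=\{|y|\le 2n\}$ and that $\nabla\chi_n$ is supported in the annulus $\{n\le|y|\le 2n\}$, together with the scaling $\|\nabla\chi_n\|_\infty\le\|\nabla\chi\|_\infty/n$ and $\|\nabla^2\chi_n\|_\infty\le\|\nabla^2\chi\|_\infty/n^2$.

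For the first two estimates on $f_n=f\chi_n$, I would use the decomposition
\[
f_n(x)-f_n(x')=\bigl(f(x)-f(x')\bigr)\chi_n(x')+f(x)\bigl(\chi_n(x)-\chi_n(x')\bigr),
\]
and perform a case split. In the case where $|x|,|x'|\le 2n+1$ (respectively $\le N+1$ for the local version), the first term is handled by the local Lipschitz continuity of $f$ on that ball, yielding a factor of $\kappa_{2n+1}$ (or $\kappa_{N+1}$), with $\sqrt d$ arising from the passage between scalar and matrix norms; the second term is dominated using $|f(x)|\le\kappa'_{2n}(1+|x|)\le 3n\kappa'_{2n}$ on the support of $\chi_n$, together with $|\chi_n(x)-\chi_n(x')|\le\|\nabla\chi\|_\infty n^{-1}|x-x'|$, so that the factor $n$ cancels. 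In the remaining case $\chi_n(x')=0$ (with $\chi_n(x)$ possibly nonzero), one rewrites $f_n(x)-f_n(x')=f(x)\bigl(\chi_n(x)-\chi_n(x')\bigr)$ and applies the same cancellation; if both cutoffs vanish the bound is trivial. This gives exactly the claimed constant.

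For the third estimate I view $\widehat f_n=ff^{\!\top}h_n$ with $h_n:=\chi_n\nabla\chi_n=\tfrac12\nabla(\chi_n^{2})$ and telescope the triple product as
\[
\widehat f_n(x)-\widehat f_n(x')
=\bigl(f(x)-f(x')\bigr)f(x)^{\!\top}h_n(x)
+f(x')\bigl(f(x)-f(x')\bigr)^{\!\top}h_n(x)
+f(x')f(x')^{\!\top}\bigl(h_n(x)-h_n(x')\bigr).
\]
Since $h_n$ is supported in $B_{2n}$, any term involving $h_n(x)$ is nontrivial only when $|x|\le 2n$; exactly as above I split according to whether $|x'|\le 2n+1$, using local Lipschitz continuity of $f$ on $B_{2n+1}$ together with $|f|\le 3n\kappa'_{2n}$ and $|h_n|\le\|\nabla\chi\|_\infty/n$ on the relevant support, so that the $n$-powers cancel and one obtains a contribution of order $\sqrt d\,\|\nabla\chi\|_\infty\kappa_{2n+1}\kappa'_{2n}|x-x'|$ from each of the first two telescoping pieces. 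For the last piece I use that $h_n=\tfrac12\nabla(\chi_n^{2})$ is Lipschitz with constant bounded by $(\|\nabla^2\chi\|_\infty+\|\nabla\chi\|_\infty^{2})/n^{2}$, while on the support of $h_n$ one has $|f(x')|\le 3n\kappa'_{2n}$, and the two factors of $n^{-2}$ and $n^{2}$ cancel to give a contribution of order $(\|\nabla^2\chi\|_\infty+\|\nabla\chi\|_\infty^{2})(\kappa'_{2n})^{2}|x-x'|$. Summing the three contributions yields the constant stated in the lemma.

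The only delicate point is the case where exactly one of $x,x'$ lies outside $B_{2n}$: the naive decomposition then leaves a term with $f$ evaluated outside the region where we control a Lipschitz modulus, and the key trick is to absorb it by writing $f_n(x)=f(x)\bigl(\chi_n(x)-\chi_n(x')\bigr)$ and exploiting the gain $1/n$ from $\|\nabla\chi_n\|_\infty$ against the linear growth $|f(x)|\le\kappa'_{2n}(1+2n)\le 3n\kappa'_{2n}$. Once this cancellation is recognized uniformly in the case analysis, the remaining arguments are mechanical and the constants fall out in the announced form.
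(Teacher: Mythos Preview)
Your approach is correct and takes a genuinely different route from the paper's. The paper argues in two steps: first, when $f\in C^1$, it bounds $|\nabla f_n|$ and $|\nabla\widehat f_n|$ pointwise on the support of $\chi_n$ (this is where the factor $\sqrt d$ naturally enters, via the Frobenius norm of $\nabla f$ versus the scalar Lipschitz constant); second, for general locally Lipschitz $f$, it mollifies to $f^{(j)}=f*\rho_j\in C^\infty$, applies the $C^1$ bound, and passes to the limit. The support of the mollifier enlarges the relevant ball by one unit, which is precisely why $\kappa_{2n+1}$ and $\kappa_{N+1}$ rather than $\kappa_{2n}$ and $\kappa_{N}$ appear in the final statement. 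Your direct telescoping avoids mollification entirely and, carried out carefully, in fact yields the slightly sharper constants $\kappa_{2n}$ with no extra $\sqrt d$; the stated bounds then follow a fortiori. What the paper's route buys is uniformity: once the $C^1$ case is done by a pointwise gradient bound, no case analysis on the location of $x,x'$ is needed.

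One small imprecision in your write-up for $\widehat f_n$: the natural case split should be at $|x'|\le 2n$ (the boundary of $\operatorname{supp}\chi_n$), not at $2n+1$. Otherwise, in the range $2n<|x'|\le 2n+1$, the third telescoped term $f(x')f(x')^{\top}(h_n(x)-h_n(x'))$ involves $|f(x')|$, and the bound $|f(x')|\le 3n\kappa'_{2n}$ is not available there. When $|x'|\ge 2n$ you should instead use $\widehat f_n(x')=0$ and rewrite $\widehat f_n(x)=f(x)f(x)^{\top}\bigl(h_n(x)-h_n(x')\bigr)$, exactly the trick you describe for $f_n$ in your last paragraph; the cancellation of $(3n)^2$ against the $1/n^2$ in $\mathrm{Lip}(h_n)$ then gives the $9(\|\nabla^2\chi\|_\infty+\|\nabla\chi\|_\infty^{2})(\kappa'_{2n})^{2}$ contribution directly.
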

	
	\begin{proof}
		If $f$ is of class $C^1$, estimates of the same (in fact, sharper) form follow by directly bounding $|\nabla f_n|$ and $|\nabla\widehat{f}_n|$.  
		In the general case, approximate $f$ by the mollified functions $f^{(j)}:=f*\rho_j$  (where $(\rho_j)_{j\ge1}$ is a sequence of mollifiers), apply the previous argument to each $f^{(j)}$, and then pass to the limit as $j\to\infty$ to obtain the stated bounds.
	\end{proof}

\medskip
\noindent
\textbf{Acknowledgements.}
The author thanks Masaaki Fukasawa for valuable advice on the submission of this manuscript.

\end{document}